\numberwithin{equation}{section}
\newcommand{\R}{\mathbb R}
\def\E{\mathbb E}
\def\P{\mathbb P}
\newcommand{\Pw}{\mathcal P_2(\R^d)}
\def\XXint#1#2#3{{\setbox0=\hbox{$#1{#2#3}{\int}$}
\vcenter{\hbox{$#2#3$}}\kern-.5\wd0}}
\numberwithin{equation}{section}
\newtheorem{thm}{Theorem}[section]
\newtheorem{lem}[thm]{Lemma}
\newtheorem{prop}[thm]{Proposition}
\theoremstyle{definition}
\newtheorem{defn}[thm]{Definition}
\def\smallnegint{\mathop{\int\mkern-13mu
        \raise.5ex\hbox{${\scriptscriptstyle\diagup}$}}\nolimits}
\def\ds{\displaystyle}
\def\ep{\varepsilon}
\def\F{{\mathcal F}}
\def\ssetminus{\,\raise.4ex\hbox{$\scriptstyle\setminus$}\,}
\newcommand{\be}{\begin{equation}}
\newcommand{\ee}{\end{equation}}
\newcommand{\bc}{\begin{case}}
\newcommand{\ec}{\end{cases}}
\newcommand{\bs}{\begin{split}}
\newcommand{\es}{\end{split}}
\newcommand{\vs}{\vskip.075in}
\renewcommand{\bar}{\overline}
\renewcommand{\tilde}{\widetilde}
\renewcommand{\hat}{\widehat}
\def\dw{{\bf d}_2}
\begin{document}
\title[Weak solutions for the Master equation with no idiosyncratic noise]{Weak solutions of  the master equation for Mean Field Games with no idiosyncratic noise}

%\author{Pierre Cardaliaguet$^{1}$ and Panagiotis E Souganidis$^{2}$}
\author[Pierre Cardaliaguet and Panagiotis E. Souganidis]
{Pierre Cardaliaguet and Panagiotis E. Souganidis}
\address{Universit\'e Paris-Dauphine, PSL Research University, Ceremade, 
Place du Mar\'echal de Lattre de Tassigny, 75775 Paris cedex 16 - France}
\email{cardaliaguet@ceremade.dauphine.fr }
\address{Department of Mathematics, University of Chicago, Chicago, Illinois 60637, USA}
\email{souganidis@math.uchicago.edu}
\vskip-0.5in 
\thanks{\hskip-0.149in The second author was partially supported by the National Science Foundation grants DMS-1266383 and DMS-1600129, the Office for Naval Research grant N000141712095 and the Air Force Office for Scientific Research grant FA9550-18-1-0494.}

\dedicatory{Version: \today}

\begin{abstract} We introduce a notion of weak solution  of the  master equation without  idiosyncratic noise in Mean Field Game theory and establish its existence, uniqueness up to a constant and consistency with classical solutions when it is smooth. 
We work  in a monotone setting and rely  on Lions' Hilbert space approach. For the first-order master equation without  idiosyncratic noise, we also give an equivalent definition in the space of measures and establish the well-posedness. 
\end{abstract}

\maketitle      %% SHORT TITLE FOR RUNNING HEAD

\section*{Introduction}

We  introduce  a notion of a weak solution of the master equation in the Mean Field Games (MFG for short) theory for first- and second-order models in a monotone setting and without idiosyncratic noise. Using Lions' Hilbert space approach,  we show that the solution exists, is unique up to additive constants, and, when it is smooth, classical. For the first-order master equation without  idiosyncratic noise, we also give an equivalent definition in the space of measures and establish  well-posedness. The arguments do not use any regularity on the solutions which are known only in the presence of idiosyncratic noise. 
\vskip.075in

The master equation in the MFG theory was introduced by Lions in his courses at Coll\`{e}ge de France \cite{LiCoursCollege}. Lions also introduced in \cite{LiCoursCollege} the Hilbertian approach and proved the existence of a classical solution under suitable structure conditions on the coupling function (monotonicity) and Hamiltonian (convexity in the space variable). 
\vs
Defining a notion of well-posed weak solutions for the master equation in MFG is one of the important problems in the theory. 
\vskip.075in

A step in this direction is a recent paper of  Bertucci \cite{Be20} on finite state models which introduced the notion of monotone solutions  for MFG with finite state space and studied its well-posedness.  The work of \cite{Be20}, which is based on a uniqueness technique developed by Lions in \cite{LiCoursCollege},  brought to bear techniques from the theory of viscosity solutions although the actual notion of solution is not related to them. The very  recent work \cite{Be21} by Bertucci extends \cite{Be20} to the continuous state space and for several noise structures, and relies  on a regularity assumption on the solution which is known only for problems with idiosyncratic noise. 
\vskip.075in

%The aim of this paper is to investigate a notion of generalized solution which does not require the presence of idiosyncratic noise. We rely  on Lions' Hilbert space approach. For the first-order master equation without  idiosyncratic noise, we also give an equivalent definition in the space of measures and establish the well-posedness. 
%\vskip.075in
%

Here we study  the time-independent master equation without idiosyncratic noise 
%one (our results easily extend to  time-dependent  problems but to keep the notation and the presentation simpler we chose to omit the time dependence) 
which reads as 
\be\label{ME2}
\begin{split}
& U(x,m)-\beta \Delta U(x,m) + H(D_xU(x,m),x)\\[2mm]
&+ \int_{\R^d} D_mU(x,m,y) \cdot D_pH(D_xU(y,m),y)m(dy)\\[2mm]
& -\beta \Bigl( \int_{\R^d} Tr(D^2_{ym}U(x,m,y))m(dy) +2\int_{\R^d} Tr(D^2_{xm} U(x,m,y))m(dy)\\[2mm]
& +\int_{\R^{2d}} Tr(D^2_{mm}U(x,m,y,y'))m(dy)m(dy')\Bigr) = 
F(x,m)  \ \text{in} \  \R^d\times \Pw.
\end{split}
\ee
The unknown is   $U=U(x,m):\R^d\times \Pw\to \R$, where $\Pw$ is the space of Borel probability measures on $\R^d$ with finite second-order moment, $H:\R^d\times \R^d\to \R$ is the Hamiltonian of the problem, $F:\R^d\times \Pw\to \R$ is a continuous map, and $\beta \geq 0$ is the size of the common noise which is assumed to be a $d-$dimensional Brownian motion.   For the meaning of the derivatives of $U$ with respect to $m$ we refer to the books by Cardaliaguet, Delarue, Lasry and Lions  \cite{CDLL} and Carmona and Delarue \cite{CaDeBook}. 
\vs

When $\beta=0$, that is, when there is no common noise,  \eqref{ME2} takes the simpler form 
\be\label{ME1}
\begin{split}
\ds  U(x,m) + H(D_xU(x,m),x)+ %\\[2mm]
& \int_{\R^d} D_mU(x, m,y) \cdot D_pH(D_xU(y, m),y)m(dy)\\[3mm] 
& =F(x,m) \ \text{in} \  \R^d\times \Pw,
\end{split}
\ee
and is referred to as the first-order master equation. 
\vs

The solution  $U$ can be interpreted as the value function of a player of a deterministic (when $\beta=0$) or  a stochastic (when $\beta>0$) differential game with infinitely many players whose payoff  is coupled through  $F$.  Notice that the main difference between the first-and second-order equations is that \eqref{ME2} has the additional terms   multiplied by $\beta$, which express the impact of the common noise on the value function $U$ of the small player. 

\vs

The difficult term in \eqref{ME2} and \eqref{ME1} is the nonlocal integral
$$
 \int_{\R^d} D_mU(x, m,y) \cdot D_pH(D_xU(y, m),y)m(dy), 
 $$
which represents  the impact of the crowd of players on a typical small player, makes the equations nonlinear and  infinite dimensional  and   hinders any local  comparison principle and definition. 
 \vs
 
 We work in the so-called monotone setting assuming  that 
 \be
 \label{takis1000}
 %\begin{split}
 H=H(p,x) \ \ \text{ is convex in $p$ \ and \ $F$ \ is monotone in   the Lasry-Lions sense,}
 %& and\\
 %& $F$ \ \ \text{satisfies the Lasry-Lions monotonicity condition,} %\cite{LLJapan}
% \end{split}
 \ee
 that is, for any $m,m'\in \Pw$,  
$$
\int_{\R^d} (F(x,m)-F(x,m'))(m-m')(dx) \geq 0.
$$
%
% 
% $H$ is convex in the first variable and $F$ satisfies the Lasry-Lions monotonicity condition \cite{LLJapan}: for any $m,m'\in \Pw$,  
%$$
%\int_{\R^d} (F(x,m)-F(x,m'))(m-m')(dx) \geq 0.
%$$
Without this monotonicity assumption the solution of the master equation might develop discontinuities. The meaning of the solution in this case is an open problem which is completely outside of the scope of the present paper. 
\vs

In contrast, we expect here to have continuous solutions. However, because there is no diffusion term (no idiosyncratic noise),  the solution is, in general,  not smooth. The expected regularity is Lipschitz continuity and  semiconcavity  in space, and continuity in the measure. Hence, the meaning of \eqref{ME2} is, in general,  not clear. Finally, we note that, although the equation contains second derivatives, the common noise is too degenerate to prevent  shocks on the derivative of the solution. 

\vs

To study the second-order master equation we use the Hilbert space approach introduced in \cite{LiCoursCollege} and write \eqref{ME2}  in the Hilbert space $L^2(\Omega; \R^d)$ of $\R^d-$random variables defined on a given probability space $(\Omega, \F, \P)$. Combining ideas from viscosity solutions with \cite{Be20} we define a notion of weak solution of \eqref{ME2} (Definition \ref{def.wealsol}), prove its consistency with the classical formulation \eqref{ME2} when it is smooth (Proposition \ref{takis3.1}), and show that it exists (Theorem \ref{thm.existence}) and is unique up to $m-$dependent constants (Theorem \ref{thm.unique}).

\vs

For  \eqref{ME1}, we also  propose a notion of weak solution directly on $ \R^d\times \Pw$ (Definition \ref{d1}), and  show that it exists (Theorem \ref{thm.existME1}), is  unique again up to $m-$dependent constants (Theorem \ref{thm.uniqueME1}) and consistent (Proposition~\ref{takis3.1}). Finally,  we establish that the two notions of solutions of \eqref{ME1} are equivalent (Theorem \ref{thm.equivalence}). The latter question is reminiscent of similar issues for Hamilton-Jacobi equation in the space of measures as recently investigated by Gangbo and Tudorascu \cite{BaTu19}. 
%of the solution. Our notion of solution is inspired by \cite{Be20} and by the literature on infinite dimensional Hamilton-Jacobi equations. 
\vs

%The second-order master equation without idiosyncratic noise also describes a differential game with infinitely many players; in contrast with \eqref{ME1}, players are now subject to a common noise. We assume here that the common noise is a $d-$dimensional Brownian motion of size $\beta\geq0$. The  resulting master equation then involves second-order derivatives in the space of measures and takes the form:  
%\be\label{ME2}
%\begin{split}
%& U(x,m)-\beta \Delta U(x,m) + H(D_xU(x,m),x)\\[2mm]
%&+ \int_{\R^d} D_mU(x,m,y) \cdot D_pH(D_xU(y,m),y)m(dy)\\[2mm]
%& -\beta \Bigl( \int_{\R^d} Tr(D^2_{ym}U(x,m,y))m(dy) +2\int_{\R^d} Tr(D^2_{xm} U(x,m,y))m(dy)\\[2mm]
%& +\int_{\R^{2d}} Tr(D^2_{mm}U(x,m,y,y'))m(dy)m(dy')\Bigr) = 
%F(x,m)  \ \text{in} \  \R^d\times \Pw.
%\end{split}
%\ee
%Compared to \eqref{ME1}, the only difference is the term multiplied by $\beta$, which expresses the impact of the common noise on the value function $U$ of the small player. 

%As for the first-order master equation, we work here in a monotone setting and the solution is expected to be continuous. However, even in the presence of common noise, the solution is not  smooth is general. This is due to the fact that the common noise is too degenerate to prevent  shocks on the derivative of the solution. 

Devising a notion of weak solution for \eqref{ME2} turns out to be  much more challenging than for \eqref{ME1}. The reader might bear in mind the analogy with viscosity solutions and the difference in the argument between first- and second-order equation as well as the difficulties due to  the  infinite dimensional set-up. 
\vs

%To overcome these issues, we use the Hilbert space approach introduced by Lions \cite{LiCoursCollege} and write the equation in the Hilbert space $L^2(\Omega,\R^d)$ of random variables on $\R^d$ defined in a given probability space $(\Omega, \F, \P)$. Combining ideas from viscosity solutions with \cite{Be20} we define a notion of weak solution for \eqref{ME2} (Definition \ref{def.wealsol}), prove its consistency with the formulation \eqref{ME2} when it is smooth (Proposition \ref{takis3.1}), show the existence (Theorem \ref{thm.existence}) and the uniqueness (Theorem \ref{thm.unique}) of the solution. We also show that, when there is no common noise, the notion of weak solution (in the Hilbert space setting) of \eqref{ME2} with $\beta =0$ coincides with the notion of weak solution (directly set on $\R^d\times \Pw$) for the first-order master \eqref{ME1} (Theorem \ref{thm.equivalence}): this question is reminiscent of similar issues for Hamilton-Jacobi equation in the space of measures, as recently investigated by Gangbo and Tudorascu \cite{BaTu19}.  
%\vs

We remark that the notions of weak solution introduced here guarantee that the gradient  (in space) of the solution is unique. 
%Notice that there is no discrepancy between our results and the ones in \cite{Be20}, since the system in the latter is the one for the gradient of the MFG. 
To eliminate the constant it is necessary to work with the equation satisfied by the solution and not its gradient, which, at the moment, is not possible due to the lack of regularity. Although, for the sake of simplicity, we formulate the results for the master equations \eqref{ME2} and \eqref{ME1}, our work is mainly concerned with the equations satisfied by the derivative $D_xU$ of the value function. All claims could have been written in this set-up, and  we explain this point of view in section \ref{subsec.gradient}. 
\vs

Notice that, in order to mainstream the presentation, we work with the ``stationary'' version of the equations, that is, we have no dependence on time. The extension to time-dependent master equations does not present additional difficulties, although statements are heavier to write and proofs slightly more technical. 
\vs

We continue with a discussion of the general strategy of the paper. The definition of weak solution we introduce here yields that, 
%goal is to come up with a notion of weak solution such that, 
if $U_1,U_2:\R^d\times \mathcal P_2(\R^d)\to \R$ are two continuous in both variables and  Lipschitz continuous with respect to the first variable solutions, then
\be\label{takis1001}
\inf_{m,m'\in \Pw} \int_{\R^d} (U_1(x,m)-U_2(x,m'))(m-m')(dx) \geq 0,
\ee
a fact which, in view of Lemma~\ref{LionsLemma} proven in \cite{LiCoursCollege}, implies that,  for a.e. $x\in \R^d$ and for all $m\in  \Pw$,  
\[D_xU_1(x, m)=D_xU_2(x,m),\] 
and, hence, 
\[U_1(x, m)=U_2(x,m) +c(m) \ \ \text{for some $c\in C(\mathcal P_2(\R^d);\R)$.}\] 

%for all  $m,m'\in \Pw$,
%\be\label{cond.mono}
%\int_{\R^d} (U_1(x,m)-U_2(x,m'))(m-m')(dx)\geq 0. 
%\ee
%It is a general fact proven in \cite{LiCoursCollege}, that \eqref{cond.mono} implies that, for a.e. $x$ and all $m\in  \Pw$,  
%\[D_xU_1(x, m)=D_xU_2(x,m),\] 
%\vs
%and, hence, there exists some $c\in C(\mathcal P_2(\R^d);\R)$ such that $U_1(x, m)=U_2(x,m) +c(m)$.
%\vs

%The second comment it is time to do is about the general strategy of the paper. In view of Lemma \ref{LionsLemma}, the goal in the uniqueness proofs below (proofs of Theorems \ref{thm.uniqueME1} and \ref{thm.unique}) is to check that \eqref{cond.mono} holds when $U_1$ and $U_2$ are two weak solutions of \eqref{ME1} or \eqref{ME2} for a suitable notion of weak solution. To do so, one is led to study the minimization problem 
%$$
%\min_{m,m'\in \Pw} \int_{\R^d} (U_1(x,m)-U_2(x,m'))(m-m')(dx)
%$$
%and check that this minimum is nonnegative.  
\vs
If $U_1$ and $U_2$ are smooth solutions of  \eqref{ME2} or  \eqref{ME1}, a simple but demanding computation shows that \eqref{takis1001}  is indeed true  in the monotone setting \eqref{takis1000}. 
\vs

 For \eqref{ME1}, this computation relies on writing, for $U=U_1$ or $U=U_2$, the first-order derivative in $m$ of the map 
 %of quantities of the form 
\be\label{keyquantity}
m\to \int_{\R^d} U(x,m)(m-m')(dx).
\ee
For \eqref{ME2}, it also asks for the second-order derivative in $m$. Of course, if $U$ is not smooth, this computation is unclear. 
\vs 

The breakthrough of \cite{Be20}, in the finite state set-up and of \cite{Be21}, in the continuous space set-up, is to test quantities of the form \eqref{keyquantity} against simple smooth functions, exactly as in viscosity solution theory. In the set-up of \cite{Be20, Be21}, linear test functions are enough. We use variations of this idea in our definitions of weak solutions. 
\vs

For \eqref{ME2} and \eqref{ME1}, there are  three main differences with \cite{Be20}. The first one is that  we work in an infinite dimensional setting. This issue has been already overcome in the framework of viscosity solutions of Hamilton-Jacobi equation by introducing singular test functions; see, for example, Crandall and Lions \cite{CrLi1, CrLi2}, Tataru \cite{Ta82}, Lions \cite{Lions89} and the recent monograph  by Fabbri, Gozzi and \'{S}wi\c{e}ch \cite{FGS17} as well as the references therein. This issue does not appear in \cite{Be21} since the master equation is set in a compact state space (the torus).
%by the fact that the author works with a compact state space (the torus). 
\vs
For the first-order master equation 
one can work directly on $\Pw$ and use test functions of the form
$$
\int_{\R^d}\phi(x)m(dx) -\ep {\bf d}_2(m, \tilde m),
$$
with  $\ep>0$,   $\phi:\R^d\to \R^d$ Lipschitz and $\tilde m\in \Pw$, which is the sum of a linear  and  a singular function in $m$. Writing (formally) the equation satisfied by \eqref{keyquantity} and using such class of test functions leads  essentially to our definition of weak solutions for \eqref{ME1}. 
\vs

For the second-order master equation, the argument above  does not work directly because of the second-order terms. This is 
the second  difference with \cite{Be20, Be21}, where the second-order master equations are studied only in a formal way or under a priori regularity conditions on the solution. In the finite dimensional framework, the second order derivatives are handled by the so-called Jensen's Lemma (see, for example,  Crandall, Ishii and Lions \cite{CILuserguide} and the references therein), which has no counterpart in infinite dimension. To deal with this issue we use the  Hilbert space approach to write the master equation in a Hilbert space (see \cite{LiCoursCollege}) and some ideas of the theory of viscosity solutions in infinite dimension put forward in \cite{Lions89} to handle the second-order term. 
\vs

The third and main difficulty compared to \cite{Be20, Be21} is related to the regularity of the solution. Because \eqref{ME2} and \eqref{ME1} contain no idiosyncratic noise (in contrast with the equations studied in \cite{Be21}), the solution is expected to be merely Lipschitz continuous in space. Therefore integrals of quantities of the form $H(D_xU(\cdot, m),\cdot)$ against general probability measures do not make sense.  This requires to introduce a penalization term to the test functions in order to ``touch'' the quantity \eqref{keyquantity} only at measures with a density. This technical point is discussed in details after the definitions of weak solutions. 

\vs

The  MFG theory was introduced by Lasry and Lions in \cite{LLJapan} and, in a particular setting,  by Caines,  Huang and Malham\'{e} \cite{huang2006large}. By now there is a considerable body of literature  in the subject. Listing all the references is beyond the scope of this paper. Early in the development of the theory, it became clear that the ``right object" to study is the master equation, which was introduced in \cite{LiCoursCollege}.  The master equation encompasses all the important properties of the MFG models, and provides the way to obtain approximate Nash equilibria. 
Its analysis has been largely developed by Lions in his courses in Coll\`{e}ge de France \cite{LiCoursCollege}, and then studied, first at a heuristic level by Bensoussan, Frehse and Yam \cite{BeFrYa13} and  Carmona and Delarue \cite{CaDe14}, and  with more rigorous argument by  Gangbo and \'{S}wi\c{e}ch \cite{GaSw15} in the pure first-order case, Buckdahn, Li, Peng and Rainer \cite{BLPR17}, who considered  linear equations with idiosyncratic noise, Chassagneux, Crisan and Delarue \cite{CCD14}, who studied nonlinear equations with  idiosyncratic noise, and Cardaliaguet, Delarue, Lasry and Lions \cite{CDLL} who dealt with nonlinear equations in the presence of  both idiosyncratic and common noises. Since then, many works have been devoted to this topic. Lions also developed the Hilbertian approach \cite{LiCoursCollege} in order to handle equation of the form \eqref{ME2} or \eqref{ME1}, which yields the existence of classical solutions under a structure condition on $H$ and $F$ ensuring the convexity of the solution with respect to the space variable. A partial list of references on the master equation is  \cite{AmMe21, bayraktar2019finite, bertucci2019some, BeLaLi20, Be20, Be21, Bessi20, cardaliaguet2020splitting, CaDeBook, GaMe20, GMMZ21, Ma20, MoZh}. 
\vs

In spite of all the progress mentioned above, an important  question that has remained open is the development of a theory of weak solutions of the master equation,  which is not based on regularity. Indeed, without idiosyncratic noise, the solution is not expected to be more than Lipschitz continuous in the space variable and not more than continuous in the measure variable. The recent papers Gangbo and M\'esz\'{a}ros \cite{GaMe20} and Gangbo, M\'esz\'{a}ros, Mou and  Zhang  \cite{GMMZ21} overcome these difficulties by assuming a structure condition which ensures space convexity and, hence, the smoothness of the solution. First steps in the direction of dealing with nonsmooth  solutions are  the paper of Mou and  Zhang \cite{MoZh}, which discusses some notions of weak solution based on the behavior of the solution with respect to the solution of the mean field game system, as well as the aforementioned works \cite{Be20, Be21}.

\subsection*{Organization of the paper} The paper is organized as follows. In section~1 we introduce the Hilbert space approach and  the basic assumptions. We also state an important technical lemma which is in the background of the uniqueness 
up to a constant. In section 2 we study the  first-order master equation. Section 3 is about the second-order problem. Finally, section 4 discusses the equivalence of the definitions for the first-order master equation.

\subsection*{Notation} Throughout the paper $\mathcal{P}(\R^d)$, $\mathcal{P}_1(\R^d)$ and $\mathcal{P}_2(\R^d)$ are respectively the sets of Borel probability measures on $\R^d$, of Borel probability measures with finite first moment and finite second moment respectively, which are denoted by $M_1$ and $M_2$, that is, given $m\in \mathcal P(\R^d)$,  $M_1(m)=\int_{\R^d} |x|m(dx)$ and $M_2(m)=\int_{\R^d} |x|^2m(dx)$. We let ${\bf d}_1$ and ${\bf d}_2$ be the usual Wasserstein distances on $\mathcal P_1(\R^d)$ and $\Pw$ respectively. We denote  by $\Pw \cap L^\infty(\R^d)$ 
the set of measures $m\in \Pw$ which are absolutely continuous with respect to the Lebesgue measure in $\R^d$ with density in $L^\infty(\R^d)$, which we also denote by $m$. If $h:\R^d\to \R^d$ is a Borel measurable map and $m\in \mathcal P(\R^d)$, we write $h\sharp m$ for the image by $h$ of the measure $m$. 
If $f\in L^\infty(\mathcal O)$, then $\|f\|_{\mathcal O, \infty}$ is  the usual $L^\infty$-norm. When $\mathcal O=\R^d$, then we simply write $\|f\|_\infty.$  The inner product between $x, y \in \R^d$ is $x\cdot y$.  Finally, given $m:\mathcal O \to \R_+$ Borel-measurable, $L^2_m(\mathcal O;\R^k)= \{f:\mathcal O\to \R^k: \int_{\mathcal O} |f(x)|^2 m(x) dx<\infty \}.$ When the domain is $\R^d$, we simply write $L^2_m$.

\section{Preliminaries and assumptions}

We recall several facts about the notion of monotonicity,  the Hilbert space approach, some notation from the theory of viscosity solutions in Hilbert spaces, and state the main assumptions. 

\subsection*{A key lemma on monotonicity} Following  \cite{LLJapan, LiCoursCollege}, the notion of monotonicity plays a central role in the analysis of the master equations \eqref{ME2} and \eqref{ME1}. This can be illustrated by the following lemma, proven in  \cite{LiCoursCollege}, which links monotonicity with  uniqueness and  plays an instrumental role in the paper.

\begin{lem}\label{LionsLemma} Assume that $U_1,U_2:\R^d\times \mathcal P_2(\R^d)\to \R$ are continuous in both variables and  Lipschitz continuous with respect to the first variable, and that, for all  $m,m'\in \Pw$,
\be\label{cond.mono}
\int_{\R^d} (U_1(x,m)-U_2(x,m'))(m-m')(dx)\geq 0. 
\ee
Then, for a.e. $x$ and all $m\in  \Pw$,  $D_xU_1(x, m)=D_xU_2(x,m)$. 
\end{lem}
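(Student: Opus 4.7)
The plan is to prove the stronger statement that, for every fixed $\tilde m \in \Pw$, the Lipschitz function $f(x) := U_1(x,\tilde m) - U_2(x,\tilde m)$ is constant on $\R^d$; the lemma's conclusion about a.e.\ equality of the spatial gradients then follows at once from Rademacher's theorem. The strategy is to exploit the monotonicity \eqref{cond.mono} along affine interpolations between two measures.

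Fix $\tilde m, \nu \in \Pw$ and $t \in (0,1)$. I would apply \eqref{cond.mono} successively to the two pairs $(m,m') = (\tilde m,\,(1-t)\tilde m + t\nu)$ and $(m,m') = ((1-t)\tilde m + t\nu,\,\tilde m)$, noting that $m - m'$ equals $\pm t(\tilde m - \nu)$ in the two cases. Dividing each inequality by $t > 0$ preserves the sign and yields
\begin{align*}
\int_{\R^d}\bigl(U_1(x,\tilde m) - U_2(x,(1-t)\tilde m + t\nu)\bigr)(\tilde m - \nu)(dx) &\geq 0,\\
\int_{\R^d}\bigl(U_1(x,(1-t)\tilde m + t\nu) - U_2(x,\tilde m)\bigr)(\nu - \tilde m)(dx) &\geq 0.
\end{align*}
Letting $t \to 0^+$ and passing to the limit inside the integrals by dominated convergence would give the two opposite inequalities $\int_{\R^d} f\,d\tilde m \geq \int_{\R^d} f\,d\nu$ and $\int_{\R^d} f\,d\nu \geq \int_{\R^d} f\,d\tilde m$, and hence the equality $\int_{\R^d} f\,d\tilde m = \int_{\R^d} f\,d\nu$ for every $\nu \in \Pw$.

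Taking $\nu = \delta_x$, which belongs to $\Pw$ for every $x \in \R^d$, then gives $f(x) = \int_{\R^d} f\,d\tilde m$, a quantity independent of $x$; so $f$ is constant on $\R^d$ and the spatial gradients of $U_1(\cdot,\tilde m)$ and $U_2(\cdot,\tilde m)$ agree wherever they exist, in particular Lebesgue almost everywhere. The main technical point is justifying the limit $t \to 0^+$ inside the integrals against the signed measure $\tilde m - \nu$: the integrands converge pointwise to $f(x)$ by continuity of $U_i$ in the measure variable, and should be dominated by a function of the form $C(1+|x|)$ coming from the Lipschitz-in-$x$ hypothesis together with continuity of $U_i(0,\cdot)$ along the compact segment $t \mapsto (1-t)\tilde m + t\nu$; this envelope is integrable against $|\tilde m - \nu|$ because elements of $\Pw$ have finite first moment.
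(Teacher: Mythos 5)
Your proof is correct and follows essentially the same route as the paper's: perturb one measure argument linearly towards a target measure, divide by the perturbation parameter, pass to the limit using continuity of $U_i$ in $m$, and specialize to Dirac masses to conclude that $U_1(\cdot,\tilde m)-U_2(\cdot,\tilde m)$ is constant. The paper takes $\nu=\delta_{\bar x}$ from the outset rather than first proving $\int f\,d\tilde m=\int f\,d\nu$ for general $\nu$, and leaves the dominated-convergence justification implicit, which you rightly flag; otherwise the arguments coincide.
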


\begin{proof}%[Proof of Lemma \ref{LionsLemma}] 
Fix $m_0\in \Pw$ and  $\bar x\in \R^d$ and, for $h\in (0,1)$,   let $m'= (1-h)m_0+h\delta_{\bar x}$, where 
$\delta_{\bar x}$ denotes the $\delta$ mass at $\bar x$. 
% (for $h\in (0,1)$). 
\vs

It follows from the assumption that 
$$
\int_{\R^d} (U_1(x,m_0)-U_2(x,(1-h)m_0+h\delta_{\bar x}))(m_0-\delta_{\bar x})(dx)\geq 0.
$$
In view of the continuity of $U_1$ and $U_2$, letting $h\to 0^+$ we get   
$$
\int_{\R^d} (U_1(x,m_0)-U_2(x,m_0))(m_0-\delta_{\bar x})(dx)\geq 0,
$$
which can be rewritten as
$$
U_1(\bar x,m_0)- U_2(\bar x, m_0) \leq  \int_{\R^d} (U_1(x,m_0)-U_2(x,m_0))m_0(dx).  
$$
Since a similar argument yields the reverse inequality, we find that for all $\bar x\in \R^d$, 
$$
U_1(\bar x,m_0)- U_2(\bar x, m_0) = \int_{\R^d} (U_1(x,m_0)-U_2(x,m_0))m_0(dx).  
$$
Hence,  $U_1(\cdot,m_0)-U_2(\cdot,m_0)$ is constant and therefore $D_xU_1(\cdot,m_0)=D_xU_2(\cdot,m_0)$ a.e.. 

\end{proof}

\subsection*{The Hilbert space approach}\label{subsec.Hsp} 

In order to investigate a notion of weak solution of \eqref{ME2}, we follow \cite{LiCoursCollege} and formulate the equation in the space $L^2(\Omega; \R^d)$ of $\R^d-$valued random variables, where  $(\Omega, \mathcal F, \P)$ is an atomless probability space. We write $L^2$ for $L^2(\Omega;\R^d)$, $\E$ for  expectation, and $\mathcal L(X)$ for the law of the random variable $X$. 
\vs

If $U:\Pw\to \R$ and $X\in L^2$, we set $\tilde U(X)=U(\mathcal L(X))$. It turns out (see \cite{LiCoursCollege, CaDeBook}) that $U$ is continuous if and only if $\tilde U$ is continuous. In addition, $U$ is differentiable at $m\in \Pw$ if and only if $\tilde U$ is Frechet differentiable at some (and then all) random variable $X$ such that $\mathcal L(X)=m$ and %then we have 
$$
D_X\tilde U(X)= D_mU(X, m). 
$$

To handle the terms related with the common noise in \eqref{ME2},  one has to keep in mind that they  are the impact of the common noise on the value function. In other words, if $W$ is a $d-$dimensional Brownian motion defined on a different  probability space $(\Omega', \mathcal F', \P')$ with expectation denoted by $\E'$ and if $U=U(x,m)$ is a sufficiently smooth map, then (see, for example,  \cite{LiCoursCollege, CDLL, CaDeBook})
\begin{align*}
& \E'\left[ U(x+\sqrt{2\beta} W_t, (Id+\sqrt{2\beta} W_t)\sharp m)\right] - U(x,m)\\
& =  \beta t \Bigl( \Delta U(x,m) +\int_{\R^d} Tr(D^2_{ym}U(x,m,y))m(dy) +2\int_{\R^d} Tr(D^2_{xm} U(x,m,y))m(dy)\\[2mm]
& \qquad +\int_{\R^{2d}} Tr(D^2_{mm}U(x,m,y,y'))m(dy)m(dy')\Bigr)+o(t) .
\end{align*}
%(see \cite{LiCoursCollege, CDLL, CaDeBook}). 
If  $\tilde U(x,X)= U(x, \mathcal L(X))$ and $e_1,\ldots, e_d$ is the  canonical basis of $\R^d$,  then, since  $W$ is independent of $X$, the equality above becomes 
% becomes (remerber that $W$ is independent of $X$)
\begin{align*}
& \E'\left[ \tilde U(x+\sqrt{2\beta} W_t, X+\sqrt{2\beta} W_t)\right] - U(x,m)\\
& =  \beta t \Bigl( \sum_{k=1}^d (D^2_{xx}\tilde U(x,X)+ 2D^2_{xX}\tilde U(x,X)+D^2_{XX}(x,X))(e_k,e_k)\Bigr)+o(t).
\end{align*}
%where $e_k$ is the canonical basis of $\R^d$.
\vs

With this in mind, the equation \eqref{ME2} written on $L^2$ takes the form 
%\begin{align}\label{eq.tildeU}
\be\label{eq.tildeU} 
\begin{split}%\label{eq.tildeU} 
 \ds \tilde U(x, X)+H(D_x \tilde U(x,X), x) + \E\left[ D_X\tilde U(x,X)\cdot D_pH(D_x\tilde U(X,X),X)\right] \\
 \ds -\beta\Big( \sum_{k=1}^d (D^2_{xx}\tilde U(x,X)+ 2D^2_{xX}\tilde U(x,X)+D^2_{XX}\tilde U(x,X))(e_k,e_k)\Big)\\
\hskip2in  =F(x,\mathcal L(X)) \   {\rm in }\ \R^d\times L^2.
%\end{align}
\end{split}
\ee
%with  $\tilde U:\R^d\times L^2\to \R$  the new unknown. 
\vs

\subsection*{Tools from the theory of viscosity solutions in infinite dimensions} \label{subsec.Tools}

As discussed in the introduction,  to define a notion of weak solution of \eqref{ME2} we need to manipulate quantities of the form \eqref{keyquantity}. It is actually even more convenient to also  relax the variable $\tilde m$ in \eqref{keyquantity} and, using the Hilbert space approach, to look  at the map 
$$
(X,Y)\to \hat U(X,Y)=\E\left[ \tilde U(X, \mathcal L(X))-\tilde U(Y, \mathcal L(X))\right]. 
$$
The ``equation'' satisfied by $\hat U$ follows from  \eqref{eq.tildeU} and contains many terms. Here we  only discuss the second-order term (the one multiplied by $\beta$) in \eqref{eq.tildeU}. It is given by 
\begin{align*}
& \mathcal L(X,Y)= \E\Bigl[ \sum_{k=1}^d (D^2_{xx}(\tilde U(X,X)-\tilde U(Y,X))+ 2D^2_{xX}(\tilde U(X,X)-\tilde U(Y,X))\\
& \qquad +D^2_{XX}(\tilde U(X,X)-\tilde U(Y,X)))(e_k,e_k)\Bigr]. 
\end{align*}
We note, after computing the second-order derivative of $\hat U$, that %it is simply equal to 
\begin{align*}
\mathcal L(X,Y)= \beta \sum_{k=1}^d D^2_{(X,Y)}\hat U(X,Y)((e_k,e_k),(e_k,e_k)). 
\end{align*}

This leads to the introduction a particular second-order operators on $L^2\times L^2$ as follows. If $\mathcal X$ is a bilinear form on $L^2\times L^2$, we set 
\be\label{def.Lambda}
\Lambda(\mathcal X)= \sum_{k=1}^d \mathcal X((e_k,e_k),(e_k,e_k)). 
\ee
%where $e_k$ is the canonical basis of $\R^d$. 
Here we use the fact that, since each $e_k$ can be seen as a constant random variable on $\R^d$. $e_k$ is also  an element of $L^2$. 
\vs 

It is immediate that $-\Lambda$ is a degenerate elliptic operator and satisfies condition (2) and (6) in \cite{Lions89}.  Indeed,  since $\Lambda$ is linear, if $H_N$ is an increasing sequence of finite-dimensional subspaces of $H=L^2\times L^2$ and $P_N$ and $Q_N$ are the projections onto $H_N$ and $H_N^\perp$ respectively, condition (6) of  \cite{Lions89} can be written as 
$$
\lim_{N\to +\infty} \left\{ | \Lambda (Q_N) |\right\} =0.
$$
%where $Q_N$ is the projection onto $H_N^\perp$.
\vs

In fact, if $H_N$ contains all constant random variables, the space of which  has  of dimension $2d$, then we actually have $\Lambda(Q_N)=0$.  From  now on we fix $H_N$ with this property. 
\vs

For completeness, following \cite{Lions89},  we recall next the notions of   the second-order subdifferential $D^{2,-}$ and second-order subjet ${\overline D}^{2,-}$ of a map from $L^2\times L^2\to \R$. 
\vs

To simplify the notation, we consider  a lower semicontinuous map $\phi:H\to \R$, where $H$ is a general separable  Hilbert space, write $L'(H)$ for the space of bounded bilinear forms on $H$ and denote the $H$-inner product by $<\cdot,\cdot>$. In the problem 
 we are studying here,  $H=L^2\times L^2$ and $x=(X,Y)$.
\vs

Given $x_0\in H$ and $\phi :H \to \R$ lower semicontinuous, we say that $L'(H) \times H  {\ni} (X,p) \in D^{2,-} \phi (x_0)$ if 

\[\underset{x\to x_0}\liminf \Big[\|x-x_o\|^{-2} \Big(\phi (x)-\phi(x_0) - 
( p,x-x_0) -  \frac{1}{2} (X(x-x_0),x-x_0)\Big)\Big] \geq 0.\] 
It turns out that $(X,p) \in D^{2,-} \phi (x_0)$ is equivalent to the existence of $\psi \in C^2(H;\R)$ such that the map $x\to \phi (x)-\psi (x)$ attains  a  minimum at $x_0$ and $(X,p)=(D^2\psi (x_0), D\psi(x_0))$.  Since we are working a separable Hilbert space, this last fact is proved as in the finite dimensional case.
\vs  

Finally,
\[\begin{split}
  {\overline D}^{2,-}\phi(x)&=\Big \{(X,p) \in L'(H) \times H: \ \text{there exist} \  (X_n,p_n,x_n) \in L'(H) \times H \times H  \ \text{such that} \\
 &  (X_n,p_n)\in D^{2,-} \phi (x_n) \ \ \text{and} \ \ \ (x_n, p_n, X_n, \phi(x_n)) \underset{n\to \infty}\to (x,p, X,\phi(x))\Big\}.
%  \ x_n \underset{n\to \infty} \to x, \ p_n \underset{n\to \infty}\to  p,\ X_n   \underset{n\to \infty}\to X, \ \phi(x_n)\to \phi(x)\  \text{and} \   (X_n,p_n)\in D^{2,-} \phi (x_n)\Big\}.
\end{split}\]
\vs

In section 4 we will also need to refer to the first-order superdifferential $D^+\psi(x_0)$ of an upper semicontinuous $\psi:H\to \R$ which is the possibly empty set of $p\in H$ such that 
$$\underset{x\to x_0}\limsup \Big[\|x-x_o\|^{-1} \Big(\psi (x)-\phi(x_0) - 
( p,x-x_0) \Big)\Big] \leq 0.$$

\subsection*{The assumptions} We complete the introduction by stating some of the assumptions needed in our study. 
\vs
Throughout the paper we assume that 
\be\label{takis1002}
 H \in C^1(\R^d\times \R^d;\R), \  H(0,x) \ \text{is  bounded and $H$ is convex in the first variable,}
\ee
and
\be\label{takis1003} 
F:\R^d\times \mathcal P_1(\R^d)\to \R \ \text{ is Lipschitz continuous, monotone and bounded.}  
\ee
\vs

For the existence proof we will need much stronger  conditions. In particular,  we assume  that 
\be\label{HH}
\begin{cases}
(i) \; \text{for any $R>0$,  there exists $C_R>0$ such that,}\\[2mm]
\text{ for all $x,p\in \R^d$ with  $|p|\leq R$, }\\[2mm]
\qquad  |H (p,x)|+ |D_p H(p,x)|+ |D^2_{px}H(p,x)|+ |D^2_{pp}H(p,x)|\leq C_R, \\[2mm]
(ii)\; \text{ there exists $\lambda >0$ and $C_0>$ such that,}\\[2mm] 
\text{for any $p,q,x,z\in \R^d$ with $|z|=1$ and in the sense of distributions, }\\[2mm]
\lambda(D_pH(p,x)\cdot p- H(p,x))+ D^2_{pp}H(p,x)q\cdot q \\[1.5mm]
  \qquad \qquad \qquad \qquad \qquad \qquad  + 2D^2_{px}H(p,x)z\cdot q + D^2_{xx}H(p,x) \ z\cdot z \geq - C_0,
\end{cases}
\ee
and
\be\label{FG}
\begin{cases}
 F\in C(\R^d\times \mathcal P_1(\R^d);\R) \ \text{and 
there exists $C_0>0$ such that}\\[2mm]
\underset{m\in \mathcal P_1(\R^d), \; t\in [0,T]}\sup\left[
\|F(\cdot, m)\|_\infty +\|DF(\cdot ,m)\|_\infty + \|D^2F(\cdot ,m)\|_\infty\right] \leq C_0.
\end{cases}
\ee

We also need to reinforce the monotonicity condition on $F$ by assuming that 
\be\label{takis22}
\begin{cases}
\text{ %is strongly monotone, that is,
there exits $\alpha_F>0$ such that, for all  $m_1,m_2\in \mathcal P_1(\R^d)$},\\[2mm] 
\ds \int_{\R^d} ( F(x,m_1)- F(x,m_2)) (m_1-m_2)(dx) \geq \alpha_F\int_{\R^d} (F(x,m_1)- F(x,m_2))^2dx,\\[2mm] 
\end{cases}
\ee
and 

\begin{equation}\label{SM2}
\displaystyle \int_{\R^d} (F (x,m_1)-F(x,m_2)) (m_1-m_2)(dx)\geq 0 \ \  \text{implies} \ \ m_1=m_2.
\end{equation}

Conditions \eqref{HH} and \eqref{FG}  ensure that the solution of the master equation is bounded and uniformly semiconcave. The strong monotonicity condition \eqref{takis22}, the strict monotonicity condition \eqref{SM2}, as well as \eqref{HH} and \eqref{FG}   were used by the authors in  \cite{CaSo20} to solve the underlying backward-forward system of stochastic  partial differential equations. The results of \cite{CaSo20} are  used here to establish the existence of the weak solution solution of \eqref{ME2}. 

\section{The first-order master equation}

\subsection*{The notion of weak solution} We study the first-order master equation \eqref{ME1} in $\R^d\times \Pw$ and introduce the following definition of weak solution.

\begin{defn}\label{d1} A bounded function $U=U(x,m) \in C(\R^d\times \Pw) $ is a weak solution of \eqref{ME1}, if $U$ is Lipschitz continuous and semiconcave in the first  variable both uniformly in the second variable, and there exists $C>0$ such that, for any $\tilde m\in \Pw\cap L^\infty(\R^d)$,  $\hat m\in \Pw$, $\phi\in W^{1,\infty}(\R^d)$ and $\ep>0$ such that the map 
$$
m \to \int_{\R^d} (U(x,m)-\phi(x))(m(x)-\tilde m(x))dx +\ep(\dw(m, \hat m)+ \|m\|_\infty)
$$
has a local minimum at $m_0$ in $ \Pw\cap L^\infty(\R^d))$, we have 
\begin{align}\label{Cond1}
& \ds \int_{\R^d} U(x, m) (m_0(x)-\tilde m(x))dx + \int_{\R^d} H(D_xU(x, m_0),x)(m_0(x)-\tilde m(x))dx\notag \\
& \qquad \ds - \int_{\R^d} (D_xU(y, m_0)-D\phi(y))\cdot D_pH(D_xU(y,m),y )m_0(y)dy\\
& \qquad\qquad \ds \geq 
\int_{\R^d} F(x,m_0)(m_0(x)-\tilde m(x))dx- C\ep(1+\|m_0\|_{\infty}). \notag
\end{align}
\end{defn}
The idea of using this argument to define weak solution of \eqref{ME1} goes back to  \cite{Be20}, which considered  a finite state space model. 
\vs

We continue with some remarks on the notion of weak solution.  
\vs
First notice that, if $U$ as in Definition~\ref{d1} and $c\in C(\Pw)$, then $U+c$ is also a weak solution, since
\[\begin{split}
&\ds \int_{\R^d} ( U(x,m) +c(m) -\phi(x))(m(x)-\tilde m(x))dx\\
&\ds =\int_{\R^d} ( U(x,m)-\phi(x))(m(x)-\tilde m(x))dx + \int_{\R^d} c(m) (m(x)-\tilde m(x))dx\\
&\ds =\int_{\R^d} ( U(x,m) -\phi(x))(m(x)-\tilde m(x))dx.\end{split}\]
So the definition is more a notion of weak solution for $D_xU$ than for $U$. We develop this point for the second-order master equation at the end of section \ref{subsec.gradient}. 
\vs
The heuristic explanation of the definition is as follows.  Ignoring the penalization terms in $\ep$,  that is, assuming that  $\ep=0$, and assuming that  $U$ is  a smooth solution of \eqref{ME1}, we see that, if the map
$$
m \to \int_{\R^d} (U(x,m)-\phi(x))(m(x)-\tilde m(x))dx
$$
has a local minimum at $m_0$, then the first-order optimality condition yields 
\be\label{ilzakjensrd}
\int_{\R^d} D_mU(x,m,y)(m(x)-\tilde m(x))dx + D_xU(y,m)-D\phi(y)=0. 
\ee
On the other hand, integrating \eqref{ME1} against $(m-\tilde m)$, we find
%\begin{align*}
\[\begin{split}
  \int_{\R^d} (U(x,m) +  & H(D_xU(x,m),x))(m(x)-\tilde m(x))dx\\
& +  \int_{\R^{2d}} D_mU(x, m,y) \cdot D_pH(D_xU(y, m),y)m(dy)(m-\tilde m)(dx)\\
& \hskip.5in   =\int_{\R^d} F(x,m)(m(x)-\tilde m(x))dx. 
\end{split}\]
%\end{align*}
Using \eqref{ilzakjensrd} in the second term gives
\begin{align*}
&\ds  \int_{\R^d} (U(x,m) + H(D_xU(x,m),x))(m(x)-\tilde m(x))dx\\[1.5mm]
& +  \int_{\R^{d}} (D_xU(x,m,y)-D\phi(y)) \cdot D_pH(D_xU(y, m),y)m(dy)  =\int_{\R^d} F(x,m)(m(x)-\tilde m(x))dx,
\end{align*}
which is precisely \eqref{Cond1} up to the penalization terms in $\ep$. 
\vs

There are some important differences between \cite{Be20, Be21} and our setting which is infinite dimensional. They are  
the lack of local compactness of the state space, the nonlocality  of  the nonlinearity and the low, only Lipschitz continuity, regularity of the solution. 
\vs

As in previous works for Hamilton-Jacobi equations in infinite dimensions, see, for example, \cite{Ta82},  we deal with the lack of compactness by introducing the penalization term $\ep {\bf d}_2(m,\tilde m)$ in the definition. 
\vs

Recall that the  solution $U=U(x,m)$  is only almost everywhere differentiable in $x$. As a result, the nonlocal transport term makes sense only if the integral is against absolutely continuous measures with enough integrability. As we will see later, this is also related with 
the construction of a solution, for which there is a natural representation formula only when the measure is absolutely continuous with a bounded density. These consideration leads us to add the penalization term $\ep \|m\|_{L^\infty}$.

\subsection*{The existence of weak solutions}   

To prove the existence,  we consider, 
for $t_0\geq 0$ and   $m_0\in \Pw \cap L^\infty(\R^d)$, the solution  $(u,m)$ 
of the classical forward-backward MFG system
\be\label{takis2.2}
\left\{\begin{array}{l}
\partial_t u = u+H(Du,x) - F(x,m) \ \text{in} \ \R^d \times (t_0,\infty), \\[1.5mm] 
\partial_t m ={\rm div}(m D_pH(Du,x))  \ \text{in} \ \R^d \times (t_0,\infty),\\[1.5mm]
m(\cdot, t_0)=m_0, 
\end{array}\right.
\ee
whose existence and uniqueness  follows from   \cite{LLJapan}. Recall that a solution of \eqref{takis2.2} is  a pair $(u,m):\R^d\times [0,+\infty)\to \R\times [0,+\infty)$ such that $u$ is a Lipschitz continuous and semiconcave in space viscosity solution of the first equation while $m$ is a bounded solution of the second equation in the sense of distribution; see \cite{LLJapan} and \cite{CaHa} for details. 
\vs

Since $H$ and $F$ do not depend on time, the uniqueness of the solution of \eqref{takis2.2} implies that  $u(\cdot, t_0)$ is independent of $t_0$. 
\vs
The candidate weak solution of \eqref{ME1}  is 
\be\label{takis2.1}
U(x,m_0)= u(x, t_0),
\ee
which, as the next theorem asserts, is a weak solution of \eqref{ME1} on $\R^d\times \Pw.$

\begin{thm}\label{thm.existME1} Assume \eqref{HH}, \eqref{FG}, \eqref{takis22} and \eqref{SM2}. Then the map $U=U(x,m): \R^d\times (\Pw \cap L^\infty(\R^d))$ defined by \eqref{takis2.1} has a continuous extension on $\R^d \times  \Pw$, which is   a weak solution of \eqref{ME1}.
\end{thm}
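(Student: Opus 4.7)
The strategy is to construct $U$ from the MFG system \eqref{takis2.2}, transfer its regularity to $U$, and then verify \eqref{Cond1} by transporting the candidate minimizer $m_0$ along the MFG characteristics and differentiating at $t_0^+$. By \cite{CaSo20}, under \eqref{HH}--\eqref{SM2} the system \eqref{takis2.2} admits, for every $m_0\in\Pw\cap L^\infty(\R^d)$, a unique solution $(u,m)$ with $u$ bounded, Lipschitz and semiconcave in $x$ uniformly in $t$ and $m_0$. Since $H$ and $F$ are autonomous, uniqueness forces $u(\cdot,t_0)$ to be independent of $t_0$, legitimizing $U(x,m_0):=u(x,t_0)$ and transferring the $x$-regularity. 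The strict monotonicity \eqref{SM2} combined with the stability theory of \cite{CaSo20} gives continuity of $m_0\mapsto u(\cdot,t_0)$ in $\dw$, and since $\Pw\cap L^\infty(\R^d)$ is $\dw$-dense in $\Pw$ with uniform $x$-bounds, $U$ extends continuously to $\R^d\times\Pw$.

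For the inequality, fix $\tilde m,\hat m,\phi,\ep$ and a local minimum $m_0\in\Pw\cap L^\infty$ of
\[
\Phi(m):=\int_{\R^d}(U(x,m)-\phi(x))(m-\tilde m)(dx)+\ep\bigl(\dw(m,\hat m)+\|m\|_\infty\bigr).
\]
Let $(u,m(\cdot))$ solve \eqref{takis2.2} with $t_0=0$ and $m(0)=m_0$. By time-homogeneity and uniqueness, $U(\cdot,m(t))=u(\cdot,t)$ for all $t\ge 0$, and $m(t)\in\Pw\cap L^\infty$ stays in a $\dw$-neighborhood of $m_0$ for small $t>0$, so local minimality gives $\Phi(m(t))\ge\Phi(m_0)$. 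Using \eqref{takis2.2} and integration by parts,
\begin{align*}
\frac{d}{dt}\bigg|_{t=0^+}&\int_{\R^d}(u(\cdot,t)-\phi)(m(t)-\tilde m)\,dx\\
&=\int_{\R^d}\bigl(U(x,m_0)+H(D_xU(x,m_0),x)-F(x,m_0)\bigr)(m_0-\tilde m)(dx)\\
&\quad-\int_{\R^d}\bigl(D_xU(y,m_0)-D\phi(y)\bigr)\cdot D_pH(D_xU(y,m_0),y)\,m_0(y)\,dy,
\end{align*}
so rearranging $\Phi(m(t))\ge\Phi(m_0)$ at $t=0^+$ yields \eqref{Cond1} up to an error controlled by the upper-variation of the penalty terms.

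The main obstacle is bounding that penalty variation and justifying the above chain rule at the available regularity. The velocity $-D_pH(Du,\cdot)$ is bounded thanks to \eqref{HH} and the uniform Lipschitz bound on $u$, giving $\dw(m(t),m_0)\le Ct$ and thus an $O(\ep)$ contribution. For $\|m(t)\|_\infty$, I would combine the semiconcavity $D^2u\le C_1 I$ and convexity $D^2_{pp}H\ge 0$, together with the elementary inequality $\tr(AB)\le\tr(AC)$ for $A\ge 0$ and symmetric $B\le C$, to deduce, in the sense of distributions,
\[
\dv\bigl(D_pH(Du,x)\bigr)=\tr\bigl(D^2_{pp}H(Du,x)\,D^2u\bigr)+\tr\bigl(D^2_{xp}H(Du,x)\bigr)\le C_2,
\]
which propagated along the continuity equation gives $\|m(t)\|_\infty\le\|m_0\|_\infty e^{C_2 t}$; the $\ep\|m\|_\infty$-penalty then contributes at most $C\ep\|m_0\|_\infty$. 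To rigorize the derivative despite $u$ being only Lipschitz--semiconcave and $m$ a mere weak solution of the continuity equation, I would mollify $(u,m)$ while preserving the one-sided estimates, apply the classical duality identity, and pass to the limit using the uniform bounds --- a standard procedure in the MFG literature.
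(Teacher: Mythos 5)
Your proposal follows essentially the same approach as the paper: define $U$ via the forward--backward MFG system \eqref{takis2.2}, use time-homogeneity and uniqueness to legitimize the definition and transfer regularity, test the local minimum against the flow $m(t)$ of the system, apply the classical MFG duality identity to differentiate the quantity \eqref{keyquantity} at $t=t_0^+$, and control the penalty terms by the a priori bounds $\dw(m(t),m_0)\le C(t-t_0)$ and $\|m(t)\|_\infty\le (1+C(t-t_0))\|m_0\|_\infty$, which the paper isolates as Lemma~\ref{lem.constC}. The only noticeable difference is that you derive the $L^\infty$-growth bound by hand from semiconcavity of $u$ and convexity of $H$ (via $\dv(D_pH(Du,\cdot))\le C$), whereas the paper simply cites \cite{LLJapan,CaHa}; both rest on the same mechanism, so this is a presentational rather than substantive divergence.
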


\begin{proof} The extension property and the regularity part in the definition of weak solutions are standard and we omit their proof (the extension is explained in details in the second-order case). Here we only check the latter part of Definition~\ref{d1}. 
\vs 

For the argument we need the following lemma. For its proof we refer to \cite{LLJapan} and \cite{CaHa}.

\begin{lem}\label{lem.constC} There exists a $C>0$, which is  independent of $t_0$ and $m_0$, such that, for all $t\in (t_0,t_0+1),$ 
\begin{align*}
&\dw(m(t),m_0)\leq C (t-t_0), \; \|m(t)\|_{\infty}\leq (1+C(t-t_0)) \|m_0\|_{\infty} \ \  \text{and} \\
&\qquad   \ M_2(m(t))\leq (1+C(t-t_0))M_2(m_0).
\end{align*}
\end{lem}
\vs

Let $\tilde m \in (\Pw\cap L^\infty(\R^d))$, $\hat m\in \Pw$,  $\phi\in W^{1,\infty}(\R^d)$ and $\ep>0$ be such that the map 
$$
m\to \int_{\R^d} ( U(x,m)-\phi(x))(m(x)-\tilde m(x))dx +\ep (\dw(m,\hat m)+\|m\|_\infty)
$$
has a local minimum at $m_0\in  (\Pw\cap L^\infty(\R^d))$. 
\vs

Fix some $t_0\geq 0$, and consider the solution $(u,m)$ of \eqref{takis2.2} with initial condition $m_0$. 
\vs

Then, for $h>0$ small, we have  
\begin{align*}
&\int_{\R^d} ( U(x,m(t_0+h))-\phi(x))(m(x, t_0+h)-\tilde m(x))dx \\
& \qquad \qquad +\ep(\dw(m(t_0+h), \hat m)+\|m(t_0+h)\|_{L^\infty(\R^d)})) \geq \\
& \int_{\R^d} ( U(x,m(t_0))-\phi(x))(m(x, t_0)-\tilde m(x))dx +\ep(\dw(m_0, \hat m)+\|m_0\|_\infty).
\end{align*}
Using \eqref{takis2.1} and  Lemma \ref{lem.constC} we find
\begin{align*}
&\int_{\R^d} ( u(x, t_0+h)-\phi(x))(m(x, t_0+h)-\tilde m(x))dx\\
& \qquad + \ep (\dw(m_0, \hat m)+Ch) +\ep (1+Ch) (\|m_0\|_{L^\infty(\R^d)})\\
&\geq \int_{\R^d} ( u(x, t_0)-\phi(x))(m(x, t_0)-\tilde m(x))dx +\ep (\dw(m_0, \hat m)+\|m_0\|_\infty). 
\end{align*}
The classical, in the MFG-context, identity  
\begin{align*}
& \frac{d}{dt}\int_{\R^d} u(x,t)m(x,t)dx =\\
& \int_{\R^d} (u + H(Du(x,t),x,t)-D_pH(Du(x,t),x,t)\cdot Du(x,t)-F(x,m(t))m(x,t)dx, 
\end{align*}
and the equation for $u$ and $m$ yield that 
\begin{align*}
& \int_{t_0}^{t_0+h} \int_{\R^d} ( u+ H(Du(x,t),x,t) -F(x,m(t))(m(x,t)-\tilde m(x))dxdt\\\
& - \int_{t_0}^{t_0+h} \int_{\R^d} D_pH(Du(x,t),x,t)\cdot (Du(x,t)-D\phi(x)) m(x,t)dxdt\\ 
& \qquad \geq -C\ep h-\ep Ch\|m_0\|_\infty. 
\end{align*}
Dividing  by $h$ and letting  $h\to 0$ we obtain the result, since $Du(x, t_0)=D_xU(x,m_0)$ and $u$ is uniformly semiconcave in space while $m$ is bounded in $L^\infty$, has a bounded second-order moment  and is $L^\infty-$weak $\star$ continuous in time.  

\end{proof}

\subsection*{The uniqueness of the weak solution} 

We continue with the uniqueness result about weak solution. In view of the observation in Lemma~\ref{LionsLemma}, 
we actually prove  uniqueness up to an $m$-dependent constant. Hence,  the gradient in $x$ of a weak solution is unique.

\begin{thm}\label{thm.uniqueME1} Assume \eqref{takis1002} and \eqref{takis1003}. Then weak solutions of \eqref{ME1} are unique up to an $m$-dependent constant.
\end{thm}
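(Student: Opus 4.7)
The strategy is to invoke Lemma~\ref{LionsLemma}. If $U_1,U_2$ are two weak solutions of \eqref{ME1} in the sense of Definition~\ref{d1}, it suffices to prove that
$$
J(m,m') := \int_{\R^d}\bigl(U_1(x,m) - U_2(x,m')\bigr)(m-m')(dx) \;\geq\; 0 \qquad \forall\, m,m'\in\Pw;
$$
by continuity of $U_1,U_2$ and density of $\Pw\cap L^\infty(\R^d)$ in $\Pw$, this reduces to testing with $m,m'\in\Pw\cap L^\infty(\R^d)$. The conclusion then follows, because Lemma~\ref{LionsLemma} supplies $D_xU_1(\cdot,m)=D_xU_2(\cdot,m)$ a.e., and the continuity of $U_1-U_2$ in $x$ upgrades this to $U_1(x,m)-U_2(x,m)=c(m)$.

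Arguing by contradiction, assume $J(\bar m,\bar m')\leq -2\delta<0$ for some $(\bar m,\bar m')\in(\Pw\cap L^\infty)^2$. For small $\ep>0$, introduce the doubled penalized functional
$$
\Phi_\ep(m,m') := J(m,m') + \ep\bigl[\dw(m,\bar m)+\dw(m',\bar m')+\|m\|_\infty+\|m'\|_\infty\bigr]
$$
on $(\Pw\cap L^\infty)^2$. Since $J$ is continuous and bounded and the added penalization is lower semicontinuous and confining in both the $\dw$-distance and the $L^\infty$-norm, a (possibly approximate, via Ekeland's variational principle) local minimizer $(m_0,m_0')\in(\Pw\cap L^\infty)^2$ of $\Phi_\ep$ exists, still satisfying $J(m_0,m_0')\leq -\delta$ for $\ep$ small. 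The partial map $m\mapsto\Phi_\ep(m,m_0')$ has its local minimum at $m_0$, which after straightforward rearrangement is precisely the configuration of Definition~\ref{d1} applied to $U_1$ with $\tilde m=m_0'$, $\hat m=\bar m$, and test function $\phi(x)=U_2(x,m_0')\in W^{1,\infty}(\R^d)$ (by uniform Lipschitz regularity of $U_2$ in the first variable). A symmetric reasoning, with the roles exchanged, applies Definition~\ref{d1} to $U_2$ at $m_0'$ with $\tilde m=m_0$ and $\phi(x)=U_1(x,m_0)$.

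Adding the two resulting inequalities and writing $p_1=D_xU_1(\cdot,m_0)$, $p_2=D_xU_2(\cdot,m_0')$, the $F$-terms combine into $\int_{\R^d}(F(x,m_0)-F(x,m_0'))(m_0-m_0')(dx)\geq 0$ by the monotonicity assumption~\eqref{takis1003}, while the Hamiltonian and transport contributions combine, via the two convexity inequalities
$$
H(p_2,x)-H(p_1,x)\geq D_pH(p_1,x)\cdot(p_2-p_1),\qquad H(p_1,x)-H(p_2,x)\geq D_pH(p_2,x)\cdot(p_1-p_2),
$$
integrated respectively against $m_0$ and $m_0'$, into a non-positive quantity. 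What remains on the left-hand side is exactly $J(m_0,m_0')$, so
$$
J(m_0,m_0') \;\geq\; -C\ep\bigl(1 + \|m_0\|_\infty + \|m_0'\|_\infty\bigr).
$$
The main obstacle is the passage to the limit: the contradiction with $J(m_0,m_0')\leq -\delta$ demands $\ep(\|m_0\|_\infty+\|m_0'\|_\infty)\to 0$ as $\ep\to 0$, and this is not automatic from the naive competitor bound. Resolving it requires a careful calibration of the penalization, or an Ekeland-type perturbation in a product metric mixing $\dw$ and $\|\cdot\|_\infty$, so that the $L^\infty$-norm of the selected minimizer stays bounded independently of $\ep$; once this quantitative control is secured, letting $\ep\to 0$ closes the contradiction and establishes the uniqueness up to an $m$-dependent constant.
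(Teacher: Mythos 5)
Your proposal follows essentially the paper's route: reduce to showing
\begin{equation*}
M := \inf_{m,m'\in\Pw}\int_{\R^d}\bigl(U_1(x,m)-U_2(x,m')\bigr)(m-m')(dx)\;\geq\;0,
\end{equation*}
conclude via Lemma~\ref{LionsLemma}, double the variables with the combined $\dw$-plus-$L^\infty$ penalization and an Ekeland selection, test Definition~\ref{d1} at the selected point with $\phi$ equal to the other solution (a legitimate $W^{1,\infty}$ test function by the uniform Lipschitz bound), and then combine the convexity of $H$ in $p$ with the monotonicity of $F$. The paper argues directly for the inequality $M\geq 0$ rather than by contradiction, but this is the same argument in different clothing.

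The step you flag at the end is indeed where care is required, but you overestimate what is needed and your suggested fix points in the wrong direction. You do not need $\|m_0\|_\infty+\|m_0'\|_\infty$ to stay bounded as $\ep\to 0$; a uniform $L^\infty$ bound on the penalized minimizer is generally unavailable, and fortunately irrelevant. What you need is the much weaker statement $\ep\bigl(\|m_0\|_\infty+\|m_0'\|_\infty\bigr)\to 0$, and this comes for free from the variational scheme. Writing $(m_\ep,m_\ep')$ for the selected minimizer and
\begin{equation*}
M_\ep := J(m_\ep,m_\ep') + \ep\bigl(\|m_\ep\|_\infty+\|m_\ep'\|_\infty+\dw(m_\ep,\bar m)+\dw(m_\ep',\bar m')\bigr),
\end{equation*}
one has on the one hand $M_\ep\geq M$, since the added terms are nonnegative, and on the other hand comparison with any fixed competitor $(m,m')\in(\Pw\cap L^\infty)^2$ gives $\limsup_{\ep\to0}M_\ep\leq J(m,m')$; taking the infimum over such competitors and using the density reduction you already invoked at the outset yields $M_\ep\to M$. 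Consequently
\begin{equation*}
\ep\bigl(\|m_\ep\|_\infty+\|m_\ep'\|_\infty+\dw(m_\ep,\bar m)+\dw(m_\ep',\bar m')\bigr)
= M_\ep - J(m_\ep,m_\ep')
\leq M_\ep - M \to 0.
\end{equation*}
This is precisely what the paper packages as ``classical arguments,'' and it is the ingredient that closes your contradiction. With this inserted, your proof is complete and coincides in substance with the paper's.
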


\begin{proof} %We already know that there exists a weak solution.
Assume that $U$ and $\tilde U$ are two weak solutions of \eqref{ME1}.  Arguing along the lines of \cite{Be20, Be21}, the key point is  to prove that
\be\label{pierre}
M=\inf_{(m , \tilde m)\in \Pw\times \Pw}  \int_{\R^d} (U(x, m)-\tilde U(x,\tilde m) )(m(x)-\tilde m(x))dx \ \geq 0.
\ee
Then the conclusion follows from Lemma~\ref{LionsLemma}. 
\vs
The proof of \eqref{pierre} is achieved by penalization.  Fix  $\ep>0$ small and consider the map
\begin{equation*}
\begin{split}
\Phi_\ep(m,\tilde m)&  =  \int_{\R^d} (U(x, m)-\tilde U(x,\tilde m) )(m(x)-\tilde m(x))dx \\ 
&  \qquad +\ep\left( \|m\|_\infty+ \|\tilde m\|_{\infty}\right),
\end{split}
\end{equation*}
which  is lower semicontinuous and bounded from  below on $\Pw\times \Pw$ with values in $\R\cup\{+\infty\}$. 
\vs

Next fix  some $\hat m\in \Pw\cap L^\infty$. It follows from  Ekeland's variational principle \cite{Ek74} that there exists a minimum $ m_\ep, \tilde m_\ep$ of the map 
\be\label{takis2.3}
(m,\tilde m) \to \Phi_\ep(m,\tilde m)+\ep(\dw(m, \hat m)+ \dw(\tilde m, \hat m)).
\ee
Classical arguments  show that 
$$
\lim_{\ep\to0} \Phi_\ep(m_\ep,\tilde m_\ep)+\ep(\dw(m_\ep, \hat m)+ \dw(\tilde m_\ep, \hat m)) =M
$$
and, therefore, 
$$
\lim_{\ep\to 0^+} \ep\left( \|m_\ep\|_{\infty}+ \|\tilde m_\ep\|_{\infty}+\dw(m_\ep, \hat m)+ \dw(\tilde m_\ep, \hat m)\right) =0.
$$
Using \eqref{takis2.3} and the fact that $U$ and $\tilde U$ are weak solutions, we find 
\begin{align*}
& \ds \int_{R^d} U(x,m)(m_\ep(x) -\tilde m_\ep(x))dx  +\int_{\R^d} H(D_xU(x,m_\ep),x)(m_\ep(x)-\tilde m_\ep(x))dx \\
& \qquad \ds - \int_{\R^d} (D_xU(y,m_\ep)-D_x\tilde U(y, \tilde m_\ep))\cdot D_pH(D_xU(y, m_\ep),y)m_\ep(y)dy\\
& \qquad\qquad \ds \geq 
\int_{\R^d} F(x,m_\ep)(m_\ep(x)-\tilde m_\ep(x))dx- C\ep(1+\|m_\ep\|_{L^\infty(\R^d)})
\end{align*}
and
\begin{align*}
& \ds \int_{R^d} \tilde U(x,\tilde m_\ep)(\tilde m_\ep(x) -m_\ep(x))dx +\int_{\R^d} H(D_x\tilde U(x, \tilde m_\ep), x )(\tilde m_\ep(x)- m_\ep(x))dx \\
& \qquad \ds - \int_{\R^d} (D_x\tilde U(y, \tilde m_\ep)-D_x U(y,m_\ep))\cdot D_pH(D_x\tilde U(y, \tilde m_\ep),y t_\ep)\tilde m_\ep(y)dy\\
& \qquad\qquad \ds \geq 
\int_{\R^d} F(x,\tilde m_\ep)(\tilde m_\ep(x)- m_\ep(x))dx- C\ep(1+\|\tilde m_\ep\|_{\infty}).
\end{align*}
Adding the last two  inequalities we find 
\begin{align*}
& \ds \int_{\R^d} \left(U(x,m_\ep)-\tilde U(x,\tilde m_\ep)\right)(m_\ep(x) -\tilde m_\ep(x))dx
+ C\ep(1+\|m_\ep\|_{\infty}+\|\tilde m_\ep\|_{\infty})\\ 
& \ds \geq \int_{\R^d} (H(D_x\tilde U(x,\tilde m_\ep),x)-H(D_x U(x,m_\ep),x)\\
& \ds \qquad \qquad -D_pH(D_xU(x, m_\ep))\cdot (D_x\tilde U(x,\tilde m_\ep)-D_x U(x, m_\ep))  m_\ep(x)dx \\ 
& \ds \qquad+ \int_{\R^d} (H(D_xU(x,m_\ep),x)-H(D_x \tilde U(x,\tilde m_\ep),x)\\
& \ds \qquad \qquad -D_pH(D_x\tilde U(x, \tilde m_\ep),x)\cdot (D_x U(x, m_\ep)-D_x \tilde U(x, \tilde m_\ep))) \tilde  m_\ep(x)dx \\ 
& \ds\qquad+ \int_{\R^d} (F(x,m_\ep)-F(x,\tilde m_\ep))(m_\ep(x)-\tilde m_\ep(x))dx.
\end{align*}
In view of the convexity of $H$ in the gradient argument and the monotonicity of  $F$, the right-hand side of the inequality above  is nonnegative.
Hence, letting $\ep\to0$ leads to $M\geq 0$, which is the desired result.  

\end{proof} 

\section{The second-order master equation with common noise only}

%We turn our attention now to  the second-order master \eqref{ME2}. the goal being to come up with a notion of weak solutions which are unique, again up to an additive constant depending on $m$, and exist. 
%%
%which is about MFG-models with common noise   (of size $\beta\geq 0$) and without idiosyncratic noise. As in the previous section which dealt with \eqref{ME1}, the goal is to come up with a notion of weak solutions which are unique and exist. 
\vs

The definition of weak solutions of \eqref{ME2} and its analysis are considerably more involved than the one of \eqref{ME1} due to the presence of the extra terms arising from the common noise. Although we are not dealing with viscosity solutions, readers  should draw  of the 
analogy and level of complications in the theory of first- and second-order viscosity solutions. One of the main reasons, is the need to deal with second derivatives in $m$.  For this, we consider the Hilbert space formulation of the master equation, which was introduced in \cite{LiCoursCollege}.

%by Lions in Coll\`{e}ge de France lectures \cite{LiCoursCollege}.
\vs

We also remark that, although the weak solution we introduce is not a viscosity solution of the master equation,  the arguments used to obtain the uniqueness use several steps of the proof of the uniqueness of viscosity solutions. This similarity can be already seen in \cite{Be20, Be21}. 

\vs 

%The final remark has to do  with the anticipated uniqueness up to constants already observed for \eqref{ME1}.   
%\vs

The rest of the section is divided  in three subsections. The first is about the Hilbert space formulation, the definition of weak solution,
and the consistency with classical solutions. The second is about the uniqueness and the third is about the existence.

\subsection*{The notion of weak solution} In order to define the notion of weak solution for \eqref{ME2}, we  recall some notation from the Hilbert space approach discussed in section~1. 
%(see Subsections \ref{subsec.Hsp} and \ref{subsec.Tools}). Here $(\Omega, \mathcal F, \P)$ is an atomless probability space,  $L^2$ stands for $L^2(\Omega,\R^d)$, $\E$ stands for  expectation, and $\mathcal L(X)$ for the law of the random variable $X$. 
In addition to the general setting and notation already discussed there,  we also consider the set  $L^\infty_{ac}$  of random variables $X\in L^2=L^2(\Omega; \R^d)$ such that $\mathcal L(X)$ is absolutely continuous with respect to the Lebesgue measure $\lambda$ on $\R^d$ and such that $d\mathcal L(X)/d\lambda \in L^\infty(\R^d)$. The operator $\Lambda$ is defined in \eqref{def.Lambda}.
\vs 

Given  $U\in C(\R^d\times \Pw;\R)$, the map  $\hat U :L^2\times L^2\to \R$ is defined, for all $(X,Y)\in L^2\times L^2$, by 
$$
\hat U(X,Y)= \E\left[ U(X, \mathcal L(X))-U(Y, \mathcal L(X))\right].  
$$
For any $\ep>0$, we also consider $\hat U^\ep:L^\infty_{ac}\times L^2\to \R$ given by 
$$
\hat U^\ep(X,Y)=  \E\left[ U(X, \mathcal L(X))-U(Y, \mathcal L(X))\right] + \ep  \left\|\frac{d\mathcal L(X)}{d\lambda}\right\|_{\infty}. 
$$
Notice that $\hat U$ and $\hat U^\ep$ actually only depend on $D_xU$, if  $D_xU$ exists,  and not on $U$. 
\vs

The notion of weak solution is introduced next. 
\begin{defn}\label{def.wealsol} A bounded map $U\in C(\R^d\times \mathcal P_2(\R^d))$ is a weak solution of the master equation \eqref{ME2}  if  $U$ is Lipschitz continuous and semiconcave in $x$ uniformly  in $m$ and there exists a constant $C>0$ such that, for all $( X, Y)\in L^\infty_{ac}\times L^2$, any $\ep\in (0,1)$ and all 
$(\mathcal X, (p_X,p_Y))\in  \bar D^{2,-}\hat U^\ep(X,Y)$, 
\be\label{cond2BIS}
\begin{split}%\label{cond2BIS}
&  0 \leq  \hat U( X,  Y) -\beta \Lambda \left( \mathcal X \right)
 - \E\left[F( X, \mathcal L( X))-F( Y, \mathcal L( X))\right]\\
& +\E\left[ H(  D_xU( X, \mathcal L( X)),X)-H(-p_Y,Y)\right]\\
& - \E \left[ (D_xU( X, \mathcal L( X))-p_X)\cdot D_pH( D_xU( X, \mathcal L( X)),X)\right] +C\ep\left(1+ \left\|\frac{d\mathcal L(X)}{d\lambda}\right\|_{\infty} \right).
%&  +C\ep\left(1+ \left\|\frac{d\mathcal L(X)}{d\lambda}\right\|_{\infty} \right). \notag
\end{split}
\ee
\end{defn}

Following Definition~\ref{def.wealsol}, it is necessary to make a number of remarks.  
\vs
As it will be apparent below, the definition actually characterizes $W=D_xU$ and not $U$. Characterizing $U$ seems to be a much harder problem. Indeed, given $D_xU$,  \eqref{ME2} becomes a linear transport equation in the space of measures with a drift $D_pH(D_xU)$ which has a poor regularity. 
\vs
The assumptions made on $U$  can be translated into assumptions in $W=D_xU$: we will do so at the end of the section. %(Subsection \ref{subsec.gradient}). 
\vs

The penalization term $\left\|\frac{d\mathcal L(X)}{d\lambda}\right\|_{\infty}$ is needed to ensure that, since $D_xU$ is only defined a.e. in $\R^d$, the terms 
\[
\E\left[ H(  D_xU( X, \mathcal L( X)),X)\right] \ \ \text{and} \ \  \E \left[ (D_xU( X, \mathcal L( X))-p_X)\cdot D_pH( D_xU( X, \mathcal L( X)),X)\right]. 
\]
%and 
%$$
%\E \left[ (D_xU( X, \mathcal L( X))-p_X)\cdot D_pH( D_xU( X, \mathcal L( X)),X)\right] 
%$$
are well defined. %since $W=D_xU$ is only defined a.e.. 
\vs 
Finally, we note that  the probability space $(\Omega, \mathcal F, \P)$  is fixed. It is intuitively clear that the notion of solution should not depend on the particular choice of the probability space, but we do not check this here.

\subsection*{Consistency of the definition} 
The following proposition is about the consistency of the notion of weak solution we consider here.
%Before we continue with the wellposedness of the weak solution, we check its consistency.
%Let us note that sufficiently smooth weak solutions are in fact classical ones: 

\begin{prop}\label{takis3.1} Assume  that $H\in C^1(\R^d\times \R^d;\R)$ and $F \in C(\R^d\times \mathcal P_1(\R^d); \R)$.
If $U:\R^d\times \Pw\to \R$ is a weak solution of \eqref{ME2} and  $U$, $D_xU, D_mU, D_{xx}, D_{mm}U$ and $D_{xm}U$  %together with their first and second-order derivatives in space and measure variables 
are continuous in $x$ and $m$, then $U$ is a classical solution of  \eqref{ME2} up to adding a continuous function of $m$ to the right-hand side of   \eqref{ME2}. 
\end{prop}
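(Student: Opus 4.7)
My plan is to exploit the smoothness of $U$ to evaluate every quantity in the weak inequality \eqref{cond2BIS} at well-chosen test points, and then extract the classical equation up to an $m$-dependent constant by an averaging argument. Fix $y_0\in\R^d$ and $m_0\in\Pw\cap L^\infty(\R^d)$, pick $X_0\in L^\infty_{ac}$ with $\mathcal L(X_0)=m_0$, and let $Y_0\equiv y_0$ be the constant random variable. Under the smoothness hypothesis, $\hat U(X,Y)=\E[U(X,\mathcal L(X))-U(Y,\mathcal L(X))]$ is $C^2$ on $L^2\times L^2$, and Lions' calculus yields
\[
D_X\hat U(X_0,Y_0)(\omega)=D_xU(X_0(\omega),m_0)+\int_{\R^d}D_mU(x,m_0,X_0(\omega))\,dm_0(x)-D_mU(y_0,m_0,X_0(\omega)),
\]
\[
D_Y\hat U(X_0,Y_0)(\omega)=-D_xU(y_0,m_0),
\]
while the common-noise expansion recalled in section~1 gives
\[
\Lambda\bigl(D^2\hat U(X_0,Y_0)\bigr)=\int_{\R^d}\mathcal A U(x,m_0)\,dm_0(x)-\mathcal A U(y_0,m_0),
\]
where $\mathcal A U$ denotes the full common-noise second-order operator appearing in \eqref{ME2}.

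Because the penalty $X\mapsto\ep\|d\mathcal L(X)/d\lambda\|_\infty$ is only lower semicontinuous, $(D^2\hat U,D\hat U)$ does not in general lie in $D^{2,-}\hat U^\ep(X_0,Y_0)$; to reach an element of $\bar D^{2,-}\hat U^\ep(X_0,Y_0)$ I apply Borwein--Preiss' smooth variational principle to $\ep\|d\mathcal L(\cdot)/d\lambda\|_\infty$, obtaining $(X_\ep,Y_\ep)$ at $L^2$-distance $O(\sqrt\ep)$ from $(X_0,Y_0)$ together with a smooth quadratic perturbation $\phi_\ep$ with $\|D\phi_\ep\|=O(\sqrt\ep)$ such that $\hat U^\ep-(\hat U-\phi_\ep)$ has a local minimum at $(X_\ep,Y_\ep)$; consequently $(D^2(\hat U-\phi_\ep),D(\hat U-\phi_\ep))(X_\ep,Y_\ep)$ lies in $D^{2,-}\hat U^\ep(X_\ep,Y_\ep)$. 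Plugging this element into \eqref{cond2BIS}, the Borwein--Preiss perturbation contributes only $O(\sqrt\ep)$ once paired with the bounded quantity $D_pH(D_xU,\cdot)$ (boundedness holds because $D_xU$ is uniformly Lipschitz by the definition of weak solution), and the remaining terms collapse into
\[
\Phi(y_0,m_0)\leq\int_{\R^d}\Phi(x,m_0)\,dm_0(x)+C\ep(1+\|m_0\|_\infty)+O(\sqrt\ep),
\]
where
\[
\Phi(x,m):=U(x,m)+H(D_xU(x,m),x)+\int_{\R^d}D_mU(x,m,y)\cdot D_pH(D_xU(y,m),y)\,dm(y)-\beta\,\mathcal A U(x,m)-F(x,m)
\]
is the residual of \eqref{ME2} (after letting $\ep\downarrow 0$ so that $(X_\ep,Y_\ep)\to(X_0,Y_0)$).

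Setting $c(m_0):=\int\Phi(x,m_0)\,dm_0(x)$, the limit $\ep\downarrow 0$ gives $\Phi(y_0,m_0)\leq c(m_0)$ for every $y_0\in\R^d$ and every $m_0\in\Pw\cap L^\infty$. Since $\int(\Phi(y_0,m_0)-c(m_0))\,dm_0(y_0)=0$ with a nonpositive integrand, the continuity of $\Phi(\cdot,m_0)$ forces $\Phi(\cdot,m_0)\equiv c(m_0)$ on $\supp(m_0)$; specializing first to $m_0$ with density positive on all of $\R^d$ and then approximating an arbitrary $m\in\Pw$ by such measures, the continuity of $\Phi$ and of $c$ in $m$ extends this to $\Phi(y_0,m)=c(m)$ on the whole of $\R^d\times\Pw$, which is exactly \eqref{ME2} with right-hand side $F+c(m)$.

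The main obstacle is the subjet construction in step two: the non-smoothness of the $L^\infty$-penalty prevents a direct identification of $\bar D^{2,-}\hat U^\ep$ with the classical differential of $\hat U$, so one has to pass through a smooth variational principle and carefully verify that the Borwein--Preiss perturbation contributes only an $O(\sqrt\ep)$ error after contraction against $D_pH$, which is then absorbed together with the $C\ep(1+\|m_0\|_\infty)$ defect built into the definition of weak solution.
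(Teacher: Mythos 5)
Your proposal follows the same route as the paper: exploit the smoothness of $U$ to compute the Lions/Fr\'echet derivatives of $\hat U$, invoke a variational principle to replace the non-smooth penalty $\ep\left\|d\mathcal L(X)/d\lambda\right\|_{\infty}$ by a smooth perturbation so that a subjet element is available, plug this element into \eqref{cond2BIS}, and pass to the limit to conclude that the residual of \eqref{ME2} is constant in $x$ for each fixed $m$ (the averaging argument at the end is identical in spirit). The paper achieves the perturbation step by adding an explicit quadratic $\theta(\|X-X_0\|_2^2+\|Y-Y_0\|_2^2)$ and invoking Ekeland--Lebourg/Stegall for a small linear correction, sending $\ep\to0$ before $\theta\to0$; you replace this two-parameter scheme with a single Borwein--Preiss step.

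There is, however, a genuine gap in the second-order bookkeeping of that Borwein--Preiss step. With the scaling $\lambda=O(\sqrt\ep)$ that you choose, the perturbation $\phi_\ep$ does have $\|D\phi_\ep\|=O(\sqrt\ep)$, but its Hessian scales like $\|D^2\phi_\ep\|\sim \ep\|m_0\|_\infty/\lambda^2=O(\|m_0\|_\infty)$, which does \emph{not} vanish as $\ep\to0$. The subjet element you feed into \eqref{cond2BIS} is $\mathcal X=D^2(\hat U-\phi_\ep)$, so the term $-\beta\Lambda(\mathcal X)$ picks up the extra positive contribution $\beta\Lambda(D^2\phi_\ep)$, of order $\beta d\,\|m_0\|_\infty$, on the right-hand side. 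Your final inequality therefore only yields $\Phi(y_0,m_0)\leq c(m_0)+O(\|m_0\|_\infty)$, which is too weak to conclude. You checked only the first-order contribution of $\phi_\ep$ (the one ``paired with $D_pH$''); the Hessian contribution through $\Lambda$ is missed. The fix is either to decouple the localization from the penalty exactly as the paper does (fix $\theta$, apply Stegall with its own $\ep$, let $\ep\to0$ and then $\theta\to0$), or to choose $\lambda=\ep^\alpha$ with $\alpha<1/2$, so that $\|D^2\phi_\ep\|=O(\ep^{1-2\alpha})\to0$ while $(X_\ep,Y_\ep)\to(X_0,Y_0)$ and $\|D\phi_\ep\|=O(\ep^{1-\alpha})\to0$ as well.
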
 

Here again one can see that the notion of weak solution characterizes the space derivative of $U$. This point is developed in more detail later in this section. %in Subsection \ref{subsec.gradient}. 

\begin{proof}[The proof of Proposition~\ref{takis3.1}] We claim that,  for  any $X_0, Y_0 \in L^2_{ac}$,  
%and any $Y_0\in L^2$ we have 
%\begin{align}
\be\label{lk:jnezsrd:fg2}
\begin{split}
& \E\Big[ U(Y_0,\mathcal L(X_0)) -\beta M[U](Y_0, \mathcal L(X_0))- F( Y_0, \mathcal L( X_0))\\
&\qquad + H(  D_xU( Y_0, \mathcal L( X_0)),Y_0)\Big]\\ % \notag\\
&-  \E \left[ D_mU( Y_0, \mathcal L( X_0),X_0)\cdot D_pH( D_xU( X_0, \mathcal L( X_0)),X_0)\right]\\
 &\leq \E\Big[ U(X_0,\mathcal L(X_0))-\beta M[U](X_0, \mathcal L(X_0)) - F( X_0, \mathcal L( X_0)) \\
& + H(  D_xU( X_0, \mathcal L( X_0)),X_0)\Big] \\
 &-  \E \left[ D_mU( X_0, \mathcal L( X_0),X_0)\cdot D_pH( D_xU( X_0, \mathcal L( X_0)),X_0)\right], %\notag
%\end{align}
\end{split}
\ee
where 
\begin{align*}
M[U](x,m) = & \Delta U(x,m) + \int_{\R^d} Tr(D^2_{ym}U(x,m,y))m(dy) +2\int_{\R^d} Tr(D^2_{xm} U(x,m,y))m(dy)\\
& +\int_{\R^{2d}} Tr(D^2_{mm}U(x,m,y,y'))m(dy)m(dy').
\end{align*}
Indeed, fix $\theta>0$. It follows from Ekeland-Lebourg \cite{EkLe76} or Stegall (see \cite{FF, St1, St2}) that, for any $\ep>0$, there exists $p_X,p_Y\in L^2$ such that $\|p^\ep_X\|_2+\|p^\ep_Y\|_2\leq \ep$ and the map 
\[
\begin{split}
(X,Y)\to \ & \widehat  U^\ep(X,Y)-  \E\left[ U(X, \mathcal L(X))-U(Y, \mathcal L(X))\right] \\[1.5mm]
&+\theta (\|X-X_0\|_2^2+\|Y-Y_0\|_2^2)-\E\left[p^\ep_X \cdot X+p^\ep_Y \cdot Y\right]
\end{split} \]
has a minimum at $(X_\ep,Y_\ep)$. 
\vs

Note that, as $\ep\to 0$,  $(X_\ep,Y_\ep) \to (X_0,Y_0)$ in $L^2\times L^2$ and, since $X_0\in L^2_{ac}$,  
$$
\ep\left\|\frac{d\mathcal L(X_\ep)}{d\lambda}\right\|_{\infty}\to 0.
$$ 
It follows from  Definition \ref{def.wealsol}, that 
%\begin{align}
\be\label{lk:jnezsrd:fg}
\begin{split}
0 \leq &\;  \hat U( X_\ep,  Y_\ep) -\beta \Lambda \left( \mathcal X \right)
 - \E\left[F( X_\ep, \mathcal L( X_\ep))-F( Y_\ep, \mathcal L( X_\ep))\right] \\
& +\E\left[ H(  D_xU( X_\ep, \mathcal L( X_\ep)),X_\ep)-H(-p_Y,Y_\ep)\right]\\
& - \E \left[ (D_xU( X_\ep, \mathcal L( X_\ep))-p_X)\cdot D_pH( D_xU( X_\ep, \mathcal L( X_\ep)),X_\ep)\right]\\
&  +C\ep\left(1+ \left\|\frac{d\mathcal L(X_\ep)}{d\lambda}\right\|_{\infty}\right), 
%\end{align}
\end{split}
\ee
where, with  $m^\ep= \mathcal L(X_\ep)$, 
$$
p_X= D_xU(X_\ep, m_\ep)+ \int_{\R^d} (D_mU(X_\ep,m_\ep,y)-D_mU(Y_\ep,m_\ep,y))m_\ep(dy) -2\theta(X_\ep-X_0)+p^\ep_X,
$$
$$
p_Y= -D_xU(Y_\ep, m_\ep))-2\theta(Y_\ep-Y_0)+p^\ep_Y, 
$$
\begin{align*}
\mathcal X_{XX}& =  D^2_{xx}U(X_\ep, m_\ep)+ \int_{\R^{2d}} (D^2_{mm}U(X_\ep,m_\ep,y,,y')-D^2_{mm}U(Y_\ep,m_\ep, y,y'))m_\ep(dy)m_\ep(dy')\\
& +2 \int_{\R^d} D^2_{mx}U(X_\ep, m_\ep,y)m_\ep(dy) +\int_{\R^d} D^2_{ym}(U(X_\ep,m_\ep,y)-D^2_{ym}U(Y_\ep,m_\ep,y))m_\ep(dy)  -2\theta I, 
\end{align*}
$$
\mathcal X_{XY}=-\int_{\R^d} D^2_{mx}U(Y_\ep, m_\ep,y)m_\ep(dy) 
$$
and 
$$
\mathcal X_{YY}= -D^2_{xx}U(Y_\ep, m_\ep))-2\theta I. 
$$
In view of the definition of $\Lambda$, we have  
\begin{align*}
\Lambda(\mathcal X)& = \E\Bigl[ M[U](X_\ep, \mathcal L(X_\ep))- M[U](Y_\ep, \mathcal L(X_\ep))\Bigr]-4\theta d. 
\end{align*}
We can then pass to the limit in \eqref{lk:jnezsrd:fg} as $\ep\to 0$ and then as $\theta \to0$ to get \eqref{lk:jnezsrd:fg2}. 
\vs

Fix again $X_0\in L^2_{ac}$. Since \eqref{lk:jnezsrd:fg2} holds for any random variable $Y_0$, it holds in particular for any deterministic $Y_0\in \R^d$. Then, using the  assumption on $\mathcal L(X_0)$ and \eqref{lk:jnezsrd:fg2}, we find that
\begin{align*}
Y_0\to &\ U(Y_0,m_0) -\beta M[U](Y_0, m_0)- F( Y_0, m_0)+ H(  D_xU( Y_0, m_0),Y_0) \notag\\
& \qquad -  \int_{\R^d} D_mU( Y_0, m_0,y)\cdot D_pH( D_xU( y, m_0),y)m_0(dy)
\end{align*}
is constant, that is,  it is a map $g(m_0)$ which depends continuously on $m_0$ only. 
\vs 
It follows that  $U$ satisfies \eqref{ME2} at $m_0$ with a right-hand side given by $F(x,m_0)+g(m_0)$ instead of $F(x,m_0)$. Since  $L^2_{ac}$ is dense in $L^2$,  \eqref{ME2} holds everywhere (with right-hand side $F+g$ instead of $F$) . 

\end{proof}

\subsection*{The uniqueness of the weak solution} We now investigate the uniqueness of the weak solutions.

\begin{thm}\label{thm.unique} Assume \eqref{takis1002} and \eqref{takis1003}.
% that $H:\R^d\times \R^d\to \R$ is locally Lipschitz continuous and $F:\R^d\times \mathcal P_1(\R^d)\to \R$ is Lipschitz continuous and monotone. 
Then there exists at most one weak solution of  the master equation \eqref{ME2} up to an $m$-dependent constant. 
\end{thm}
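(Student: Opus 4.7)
The plan is to adapt the strategy of Theorem~\ref{thm.uniqueME1} to the Hilbert space setting, using a doubling-of-variables argument (in the spirit of viscosity solutions) to address the second-order terms. Given two weak solutions $U_1,U_2$ of \eqref{ME2}, the target is the monotonicity inequality
$$
M := \inf_{X,Y\in L^\infty_{ac}}\bigl(\hat U_1(X,Y)+\hat U_2(Y,X)\bigr)\ \geq\ 0.
$$
Taking $X,Y$ to realize arbitrary prescribed laws in $\Pw$ reduces this to the integral monotonicity condition \eqref{cond.mono} for the pair $(U_1,U_2)$, and Lemma~\ref{LionsLemma} then gives $D_xU_1(\cdot,m)=D_xU_2(\cdot,m)$ a.e.\ for every $m\in\Pw$, hence uniqueness up to an $m$-dependent constant.

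To prove $M\geq 0$, I would fix reference points $\hat X,\hat Y\in L^\infty_{ac}$ and, for small $\ep,\delta,\theta>0$, consider the doubled penalized functional
\begin{align*}
\Psi_{\ep,\delta,\theta}(X,Y,X',Y') =\ & \hat U_1^\ep(X,Y)+\hat U_2^\ep(Y',X')+\tfrac{1}{\delta}\bigl(\|X-X'\|_2^2+\|Y-Y'\|_2^2\bigr)\\
& +\theta\bigl(\|X-\hat X\|_2^2+\|Y-\hat Y\|_2^2+\|X'-\hat X\|_2^2+\|Y'-\hat Y\|_2^2\bigr)
\end{align*}
on $L^\infty_{ac}\times L^2\times L^\infty_{ac}\times L^2$. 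It is lower semicontinuous and bounded below. Ekeland's (or Stegall's) variational principle yields, after small linear perturbations of $L^2$-norm $\leq\eta$, a genuine minimizer $(X_*,Y_*,X'_*,Y'_*)$; the $\ep$-penalizations built into $\hat U_i^\ep$ keep the densities of $\mathcal L(X_*)$ and $\mathcal L(Y'_*)$ in $L^\infty(\R^d)$, so that Definition~\ref{def.wealsol} is applicable at $(X_*,Y_*)$ for $U_1$ and at $(Y'_*,X'_*)$ for $U_2$.

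The central step is to extract compatible subjet elements $(\mathcal X_1,(p^{(1)}_X,p^{(1)}_Y))\in\bar D^{2,-}\hat U_1^\ep(X_*,Y_*)$ and $(\mathcal X_2,(p^{(2)}_{Y'},p^{(2)}_{X'}))\in\bar D^{2,-}\hat U_2^\ep(Y'_*,X'_*)$ whose momenta balance the gradients of the coupling and Tikhonov terms, so that $p^{(1)}_X+p^{(2)}_{X'}=O(\theta+\eta)$ and $p^{(1)}_Y+p^{(2)}_{Y'}=O(\theta+\eta)$, and whose Hessian parts satisfy the Crandall--Ishii-type bound $\Lambda(\mathcal X_1)+\Lambda(\mathcal X_2)\leq C\theta$. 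This is where the infinite-dimensional viscosity machinery of \cite{Lions89} enters: reduce to finite dimensions via the projection $P_N$ onto a subspace $H_N\subset L^2\times L^2$ containing the constant random variables, apply Jensen's lemma to the doubled functional in $H_N$, and pass $N\to\infty$ using the property $\Lambda(Q_N)=0$ recalled in Section~1. Plugging these subjets into \eqref{cond2BIS} and summing, the $F$-contribution reduces, up to an $o(1)$ error from $\|X_*-X'_*\|_2+\|Y_*-Y'_*\|_2\to 0$, to $-\int(F(x,\mathcal L(X_*))-F(x,\mathcal L(Y'_*)))(\mathcal L(X_*)-\mathcal L(Y'_*))(dx)\leq 0$ by the Lasry--Lions monotonicity of $F$; and by the convexity of $H(\cdot,x)$ via $H(D_xU_i,X)-(D_xU_i-p^{(i)})\cdot D_pH(D_xU_i,X)\leq H(p^{(i)},X)$ together with Lipschitz continuity of $H$ and of the $U_j$ in the space variable, the Hamiltonian sum is $o(1)+O(\theta+\eta)$. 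Sending $\eta\to 0$, then $\delta\to 0$, then $\theta\to 0$, and finally $\ep\to 0$ in this order (with $\Psi_{\ep,\delta,\theta}$ at its minimum converging to $M$ by standard arguments) produces $M\geq 0$.

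The main obstacle is the subjet extraction just described: constructing compatible subjet elements of $\hat U_1^\ep$ and $\hat U_2^\ep$ from the joint minimum of a doubled functional in an infinite-dimensional Hilbert space, with sufficient control on the second-order parts to manage $\Lambda$. This is a genuine infinite-dimensional substitute for the Crandall--Ishii/Jensen lemma and requires the projection-and-limit scheme of \cite{Lions89}, exploiting precisely the degeneracy built into $\Lambda$ through $\Lambda(Q_N)=0$. A secondary technical challenge is the coordinated four-parameter limit: all error terms, including the density penalizations $C\ep(1+\|d\mathcal L(X_*)/d\lambda\|_\infty+\|d\mathcal L(Y'_*)/d\lambda\|_\infty)$, must be driven to zero in the correct asymptotic order, which constrains the way the parameters $\eta,\delta,\theta,\ep$ can be related.
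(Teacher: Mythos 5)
Your proposal follows essentially the same strategy as the paper's proof: doubling of variables with a quadratic coupling and a Tikhonov penalization, Stegall's variational principle, the infinite-dimensional Crandall--Ishii substitute from Lemma~4 of \cite{Lions89} with finite-dimensional projections exploiting $\Lambda(Q_N)=0$, and finally monotonicity of $F$ together with convexity of $H$ to close the estimate. The only cosmetic differences are that you use two separate small parameters ($\delta$ for the coupling, $\theta$ for the Tikhonov term) where the paper uses a single $\alpha$ for both, and the paper sends all parameters to zero simultaneously (first fixing a slack $\kappa$) rather than via a sequential limit.
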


\begin{proof} Let $U_1$ and $U_2$ be two weak solutions of  \eqref{ME2} and, for all  $(X,Y)\in L^2\times L^2$,  set
$$
\hat U_1(X,Y)= \E\left[ U_1(X, \mathcal L(X))-U_1(Y, \mathcal L(X))\right] \ \text{and} \ \hat U_2(X,Y)= \E\left[ U_2(Y, \mathcal L(Y))-U_2(X, \mathcal L(Y))\right]. 
$$
The goal is  to prove that 
\be\label{takis3.2}
\inf_{X,Y} \ [\hat U_1(X,Y)+\hat U_2(X,Y)]\geq 0,
\ee
which is equivalent to 
$$
\inf_{m,m'\in \Pw} \int_{\R^d} (U_1(x,m)-U_2(x,m'))(m(dx)-m'(dx))\; \geq \; 0.
$$
In view of Lemma~\ref{LionsLemma}  the last inequality  implies that $D_xU_1=D_xU_2$.
\vs
We begin the proof of \eqref{takis3.2} setting 
$$
M= \inf_{X,Y} \ [\hat U_1(X,Y)+\hat U_2(X,Y)],
$$
and considering, for $\ep>0$ and $\alpha>0$,  the map $\Phi_{\ep,\alpha}:L^2\times L^2\times L^2\times L^2 \to \R\cup \{+\infty\}$ defined by 
\begin{align*}
\Phi_{\ep,\alpha}(X,Y,X',Y')& = \E\left[ U_1(X, \mathcal L(X))-U_1(Y, \mathcal L(X))\right]  \\
& +  \E\left[ U_2(Y', \mathcal L(Y'))-U_2(X', \mathcal L(Y'))+\alpha(|X|^2+|Y'|^2)\right]\\
& +\frac{1}{2\alpha}(\|X-X'\|^2_2+\|Y-Y'\|^2_2)  + \ep \left( \left\|\frac{d\mathcal L(X)}{d\lambda}\right\|_{\infty}+\left\|\frac{d\mathcal L(Y')}{d\lambda}\right\|_\infty \right).
%&  + \ep \left( \left\|\frac{d\mathcal L(X)}{d\lambda}\right\|_{\infty}+\left\|\frac{d\mathcal L(Y')}{d\lambda}\right\|_\infty \right).
\end{align*}
Let 
$$
M_{\ep,\alpha}= \inf_{X,Y} \Phi_{\ep,\alpha}(X,Y,X',Y')
$$
and observe that, as $\ep,\alpha \to 0$, 
$$M_{\ep,\alpha}\to M.$$ 

Since  $\Phi_{\ep,\alpha}$ has a quadratic growth and is lower semicontinuous in $(L^2)^4$, we can find using  Stegall's Lemma, for any $\delta>0$,  $p_X, p_Y, p_{X'},p_{Y'}\in L^2$  such that 
\be\label{deltabound}
\|p_X\|_2+\| p_Y\|_2+\| p_{X'}\|_2+\|p_{Y'}\|_2\leq \delta
\ee
and the map 
$$
(X,Y,X',Y')\to \Phi_{\ep,\alpha}(X,Y,X',Y')-\E\left[ p_X\cdot  X+p_Y\cdot Y+p_{X'}\cdot X'+p_{Y'}\cdot Y'\right]
$$
has a minimum $M_{\ep,\alpha,\delta}$ at $(X_\delta, Y_\delta,X_\delta',Y_\delta')$. 
\vs

We note that, as $\delta\to 0$,  $M_{\ep,\alpha,\delta} \to M_{\ep,\alpha}$ %as $\delta\to 0^+$ 
and, for any $\kappa>0$, there exist  $\delta, \alpha >0$ and $\ep>0$ small enough so that 
\be\label{kahjzbksndfgc1}
\|X_\delta-X_\delta'\|^2_2+\|Y_\delta-Y_\delta'\|^2_2+\alpha(\|X_\delta\|^2_{2}+ \|Y_\delta'\|^2_{2})+ \frac{1}{2\alpha}(\|X_\delta-X_\delta'\|^2_{2}+\|Y_\delta-Y_\delta'\|^2_{2}) <\kappa
\ee
and 
\be\label{kahjzbksndfgc2}
 \ep \left( \left\|\frac{d\mathcal L(X_\delta)}{d\lambda}\right\|_\infty +\left\|\frac{d\mathcal L(Y_\delta')}{d\lambda}\right\|_\infty \right)<\kappa. 
 \ee
% \be\label{kahjzbksndfgc2}
% \ep \left( \left\|\frac{d\mathcal L(X_\delta)}{d\lambda}\right\|_{L^\infty(\R^d)}+\left\|\frac{d\mathcal L(Y_\delta')}{d\lambda}\right\|_{L^\infty(\R^d)}+ \|X_\delta\|^2_{L^2}+ \|Y_\delta'\|^2_{L^2}\right)<\kappa. 
% \ee
\vs

Following Lemma 4 of \cite{Lions89}, we can then find, for all $N\geq 1$, operators $\mathcal X_N$, $\mathcal Y_N$ such that $\mathcal X_N = P_N\mathcal X_NP_N$, $\mathcal Y_N = P_N\mathcal Y_NP_N$ (recall that $P_N$ and $Q_N$ are the projections onto $H^N$ and $H_N^\perp$ respectively; see section~1),  
\be\label{matrixineq}
-\frac{1}{\alpha}\left(\begin{array}{cc} I & -I\\-I& I\end{array}\right) \leq \left(\begin{array}{cc} \mathcal X_N & 0\\0& \mathcal Y_N\end{array}\right)  \leq 
\frac{2}{\alpha} \left(\begin{array}{cc} I & 0\\0& I\end{array}\right), 
\ee
%with $\mathcal X_N = P_N\mathcal X_NP_N$, $\mathcal Y_N = P_N\mathcal Y_NP_N$, 
%such that
$$
(\mathcal X_N+ \frac{1}{\alpha}  Q_N,- \frac{(X_\delta,Y_\delta)-(X_\delta',Y_\delta')}{\alpha}-2\alpha (X_\delta,0)+(p_X,p_Y)) \in \bar D^{2,-} \hat U_1^\ep(X_\delta,Y_\delta)
$$
and 
$$
(\mathcal Y_N+ \frac{2}{\alpha}  Q_N,   \frac{(X_\delta,Y_\delta)-(X_\delta',Y_\delta')}{\alpha}-2\alpha (0,Y_\delta')+(p_X',p_Y')) \in \bar D^{2,-} \hat U_2^\ep(X_\delta',Y_\delta'),
$$
where 
$$
\hat U_1^\ep(X,Y)=  \E\left[ U_1(X, \mathcal L(X))-U_1(Y, \mathcal L(X))\right]+ \ep \left\|\frac{d\mathcal L(X)}{d\lambda}\right\|_\infty 
$$
and
$$
\hat U_2^\ep(X',Y')= \E\left[ U_2(Y', \mathcal L(Y'))-U_2(X', \mathcal L(Y'))\right]+ \ep  \left\|\frac{d\mathcal L(Y')}{d\lambda}\right\|_{\infty}.
$$
It follows from the definition of weak solutions that 
\begin{align}\label{cond2BISU}
0 \leq &\;  \hat U_1( X_\delta,  Y_\delta) -\beta \Lambda \left( \mathcal X_N+\frac{1}{\alpha}Q_N \right)
 - \E\left[F( X_\delta, \mathcal L( X_\delta))-F( Y_\delta, \mathcal L( X_\delta))\right] \notag\\
& +\E\left[ H( D_xU_1( X_\delta, \mathcal L( X_\delta)), X_\delta)-H(\frac{Y_\delta-Y_\delta'}{\alpha}-p_Y,Y_\delta)\right]\\
& - \E \left[ (D_xU_1( X_\delta, \mathcal L( X_\delta))-(-\frac{X_\delta-X_\delta'}{\alpha}-2\alpha X_\delta+p_X))\cdot D_pH(  D_xU_1( X_\delta, \mathcal L( X_\delta)),X_\delta)\right]\notag\\
&  +C\ep\left(1+ \left\|\frac{d\mathcal L(X_\delta)}{d\lambda}\right\|_\infty\right),\notag
\end{align}
and,  for $\Sigma:L^2\times L^2\to L^2\times L^2$ defined by $\Sigma(X,Y)=(Y,X)$,
\begin{align}\label{cond2BISV}
0 \leq &\;  \hat U_2( X_\delta',  Y_\delta') -\beta \Lambda \left( \Sigma(\mathcal Y_N+\frac{1}{\alpha}Q_N) \right)
 - \E\left[F( Y_\delta', \mathcal L( Y_\delta'))-F( X_\delta', \mathcal L( Y_\delta'))\right] \notag\\
& +\E\left[ H( D_xU_2( Y_\delta', \mathcal L( Y_\delta')),Y_\delta')-H( - \frac{X_\delta-X_\delta'}{\alpha}- p_{X'},X_\delta')\right]\\
& - \E \left[ (D_xU_2( Y_\delta', \mathcal L( Y_\delta'))-(\frac{Y_\delta-Y_\delta'}{\alpha}-2\alpha Y_\delta'+p_Y'))\cdot D_pH(  D_xU_2( Y_\delta', \mathcal L( Y_\delta')),Y_\delta')\right]\notag\\
&  +C\ep\left(1+ \left\|\frac{d\mathcal L(Y_\delta')}{d\lambda}\right\|_\infty\right).\notag
\end{align}
We have already noticed that $\Lambda$ is linear with $\Lambda(Q_N)=0$. Moreover, in view of the definition of $\Lambda$, we also have $\Lambda\circ \Sigma =\Lambda$. Hence,  \eqref{matrixineq} implies 
$$
\Lambda \left( \mathcal X_N+\frac{1}{\alpha}Q_N \right)+\Lambda \left( \Sigma(\mathcal Y_N+\frac{1}{\alpha}Q_N )\right)\geq 0. 
$$
The  Lipschitz regularity and monotonicity  of $F$ also gives  
\begin{align*}
& \E\left[F( X_\delta, \mathcal L( X_\delta))-F( Y_\delta, \mathcal L( X_\delta))\right] 
  + \E\left[F( Y_\delta', \mathcal L( Y_\delta'))-F( X_\delta', \mathcal L( Y_\delta'))\right] \\
&\geq  -C(\|X_\delta-X_\delta'\|_{2}+\|Y_\delta-Y_\delta'\|_{2}). 
  \end{align*}
Using   the inequalities above in \eqref{cond2BISU} and \eqref{cond2BISV}  we find   
  \begin{align*}
0 \leq &\;  \hat U_1( X_\delta,  Y_\delta)+ \hat U_2( X_\delta',  Y_\delta') + C(\|X_\delta-X_\delta'\|_{2}+\|Y_\delta-Y_\delta'\|_{2})\\
& +\E\left[ H( D_xU_1( X_\delta, \mathcal L( X_\delta)), X_\delta)-H(\frac{Y_\delta-Y_\delta'}{\alpha}-p_Y,Y_\delta)\right]\\
& - \E \left[ (D_xU_1( X_\delta, \mathcal L( X_\delta))-(-\frac{X_\delta-X_\delta'}{\alpha}-2\alpha X_\delta+p_X))\cdot D_pH(  D_xU_1( X_\delta, \mathcal L( X_\delta)),X_\delta)\right]\notag\\
& +\E\left[ H( D_xU_2( Y_\delta', \mathcal L( Y_\delta')),Y_\delta')-H(  -\frac{X_\delta-X_\delta'}{\alpha}- p_{X'},X_\delta')\right]\\
& - \E \left[ (D_xU_2( Y_\delta', \mathcal L( Y_\delta'))-(\frac{Y_\delta-Y_\delta'}{\alpha}+2\alpha Y_\delta'+p_Y'))\cdot D_pH(  D_xU_2( Y_\delta', \mathcal L( Y_\delta')),Y_\delta')\right]\notag\\
&  +C\ep\left(1+ \left\|\frac{d\mathcal L(X_\delta)}{d\lambda}\right\|_{\infty}+ \left\|\frac{d\mathcal L(Y_\delta')}{d\lambda}\right\|_{\infty}\right).
\end{align*}
The Lipschitz regularity of $H$ (note that it is enough to assume that $H$ is only locally Lipschitz continuous)  and the fact that $D_xU_1$ and $D_xU_2$ are bounded, together with \eqref{deltabound}, 
allows to rewrite the last inequality as %this can be rewritten as 
  \begin{align*}
0 \leq &\;  \hat U_1( X_\delta,  Y_\delta)+ \hat U_2( X_\delta',  Y_\delta') + C\alpha (\|X_\delta\|_{2}+\|Y'_\delta\|_{2})\\
& + C(\|X_\delta-X_\delta'\|_{2}+\|Y_\delta-Y_\delta'\|_{2})(1+\alpha^{-1}(\|X_\delta-X_\delta'\|_{2}+ \|Y_\delta-Y_\delta'\|_{2})+ \delta) \\
& - \E\left[ H( - \frac{X_\delta-X_\delta'}{\alpha}- p_{X'},X_\delta')- H( D_xU_1( X_\delta, \mathcal L( X_\delta)), X_\delta')\right]\\
& - \E \left[ -((-\frac{X_\delta-X_\delta'}{\alpha}+p_X)-D_xU_1( X_\delta, \mathcal L( X_\delta)))\cdot D_pH(  D_xU_1( X_\delta, \mathcal L( X_\delta)),X_\delta')\right]\notag\\
& -\E\left[ H(\frac{Y_\delta-Y_\delta'}{\alpha}-p_Y,Y_\delta)- H( D_xU_2( Y_\delta', \mathcal L( Y_\delta')),Y_\delta)\right]\\
& - \E \left[ -((\frac{Y_\delta-Y_\delta'}{\alpha}+p_Y')- D_xU_2( Y_\delta', \mathcal L( Y_\delta')))\cdot D_pH(  D_xU_2( Y_\delta', \mathcal L( Y_\delta')),Y_\delta)\right]\notag\\
&  +C\ep\left( 1+\left\|\frac{d\mathcal L(X_\delta)}{d\lambda}\right\|_{\infty}+ \left\|\frac{d\mathcal L(Y_\delta')}{d\lambda}\right\|_{\infty}\right).
\end{align*}
Since $H$ is convex in the first variable, we find, again due to \eqref{deltabound},  that 
  \begin{align*}
0 \leq &\;  \hat U_1( X_\delta,  Y_\delta)+ \hat U_2( X_\delta',  Y_\delta') + C\alpha (1+\|X_\delta\|_{2}^2+\|Y'_\delta\|_{2}^2) \\
& + C\Bigl(\|X_\delta-X_\delta'\|_{2}+\|Y_\delta-Y_\delta'\|_{2}+\delta\Bigr)\Bigl(1+\alpha^{-1}(\|X_\delta-X_\delta'\|_{2}+ \|Y_\delta-Y_\delta'\|_{2})+ \delta\Bigr) \\
&  +C\ep\left(1+ \left\|\frac{d\mathcal L(X_\delta)}{d\lambda}\right\|_{\infty}+ \left\|\frac{d\mathcal L(Y_\delta')}{d\lambda}\right\|_{\infty}\right).
\end{align*}
Recalling that, for any $\kappa>0$, we can find $\delta$, $\alpha$ and $\ep$ so small that \eqref{kahjzbksndfgc1} and \eqref{kahjzbksndfgc2} hold, we find  that 
$$
0\leq M_{\alpha, \ep,\delta} + C\kappa, 
$$
which in turn implies that $M\geq 0$ as $\delta, \ep, \alpha\to 0$. 

\end{proof}

\subsection*{The existence of weak solutions} The result is stated and proved next.

\begin{thm}\label{thm.existence} Assume \eqref{HH}, \eqref{FG}, \eqref{takis22} and \eqref{SM2}. Then there exists a weak solution of the master equation \eqref{ME3}. 
\end{thm}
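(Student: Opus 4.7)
The plan is to construct the candidate weak solution via the stochastic forward--backward MFG system with common noise. Under the assumptions \eqref{HH}, \eqref{FG}, \eqref{takis22}, and \eqref{SM2}, this system was solved in Cardaliaguet--Souganidis \cite{CaSo20}: for each $m_0\in \Pw\cap L^\infty(\R^d)$ and each initial time, there exists a pair $(u,m)$ adapted to the filtration of the common Brownian motion $W$, with $m(0)=m_0$, such that $u$ solves the associated backward SPDE, $m$ solves the stochastic Fokker--Planck equation, and $u(\cdot,0)$ is deterministic since the problem is time-homogeneous. I define $U(x,m_0):=u(x,0)$ on $\R^d\times (\Pw\cap L^\infty(\R^d))$ and extend continuously to $\R^d\times \Pw$ by stability, as in the proof of Theorem \ref{thm.existME1}. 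The boundedness of $U$ and its uniform Lipschitz continuity and semiconcavity in $x$ are inherited from the a priori bounds of \cite{CaSo20}.

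To establish \eqref{cond2BIS}, fix $(X,Y)\in L^\infty_{ac}\times L^2$, $\ep\in(0,1)$, and $(\mathcal X,(p_X,p_Y))\in \bar D^{2,-}\hat U^\ep(X,Y)$. By the closure-under-limits property of $\bar D^{2,-}$, it suffices to treat the case where some $\psi\in C^2(L^2\times L^2)$ with $D\psi(X,Y)=(p_X,p_Y)$ and $D^2\psi(X,Y)=\mathcal X$ satisfies $\hat U^\ep - \psi$ has a local minimum at $(X,Y)$. Setting $m_0=\mathcal L(X)$, I enlarge the probability space by an independent common Brownian motion $W$ and evolve
\[
dX_t = -D_pH(D_xU(X_t,m(t)),X_t)\,dt + \sqrt{2\beta}\,dW_t,\quad X_0=X, \qquad Y_t = Y + \sqrt{2\beta}\,W_t,
\]
where $m(t)$ is the stochastic Fokker--Planck solution from $m_0$, so that $\mathcal L(X_t\mid W)=m(t)$. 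For small $h>0$ the local-minimum property yields
\[
\E[\hat U^\ep(X_h,Y_h)] - \hat U^\ep(X,Y) \,\geq\, \E[\psi(X_h,Y_h)] - \psi(X,Y).
\]

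Applying It\^o's formula to the smooth $\psi$ computes the right-hand side at order $h$: the drift of $X_t$ contributes $-\E[p_X\cdot D_pH(D_xU(X,m_0),X)]\,h$, $Y_t$ contributes no drift term, and the common-noise second-order part contributes $\beta\Lambda(\mathcal X)\,h$ since both coordinates are shifted by the same Brownian increment in each direction $e_k$, producing the form $\sum_k D^2\psi((e_k,e_k),(e_k,e_k))$. The left-hand side is expanded using the backward SPDE for $u$ and the MFG consistency $u(\cdot,t)=U(\cdot,m(t))$: the Hamiltonian, coupling and common-noise terms from \eqref{ME2} arise naturally, while the Hamiltonian at $Y$ is handled through the convexity of $H$ in $p$, yielding the one-sided bound $H(D_yU(Y,m_0),Y)\geq H(-p_Y,Y)+D_pH(-p_Y,Y)\cdot(D_yU(Y,m_0)+p_Y)$, which produces exactly the term $-\E[H(-p_Y,Y)]$ in \eqref{cond2BIS}. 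Dividing by $h$ and letting $h\to 0^+$ gives the inequality, modulo the penalization error $C\ep(1+\|d\mathcal L(X)/d\lambda\|_\infty)$.

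The main obstacle is the low regularity of $u$, which is only Lipschitz and semiconcave in $x$, so that a direct It\^o expansion of $U(X_t,m(t))=u(X_t,t)$ is not justified. I would overcome this by mollifying $u$ in $x$, using the uniform semiconcavity to control one-sided second derivatives, and passing to the limit via the convexity of $H$ and the monotonicity of $F$. A second subtle point is the discrepancy between the unconditional law $\mathcal L(X_h)$ appearing in $\hat U^\ep$ and the conditional stochastic measure $m(h)$ on which the consistency $u(\cdot,h)=U(\cdot,m(h))$ is formulated; this is handled by working $\omega$-by-$\omega$ in the common noise and using Fubini, together with the $L^\infty$-propagation of densities from \cite{CaSo20}, which keeps $\|m(t)\|_\infty$ controlled and thereby confines the penalization term $\ep\|d\mathcal L(X_h)/d\lambda\|_\infty$. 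The role of that penalization is precisely to localize the test onto measures with bounded density, a set preserved by the stochastic Fokker--Planck flow, which is what makes the integrals against $D_xU$ in the inequality meaningful in the first place.
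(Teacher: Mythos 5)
Your overall strategy matches the paper's: construct $U$ from the stochastic MFG system of \cite{CaSo20}, establish the uniform Lipschitz/semiconcavity/continuity bounds and the dynamic-programming identity $U(x+\sqrt{2\beta}\Delta W,(id+\sqrt{2\beta}\Delta W)\sharp \tilde m_{t_0+h})=\tilde u_{t_0+h}(x)$, push a random variable $X$ along the characteristic flow with a common-noise shift, compare $\hat U^\ep$ with the $C^2$ test function on both sides, and apply It\^o to the test function to extract $-\E[p_X\cdot D_pH]$ and $\beta\Lambda(\mathcal X)$. The final appeal to first-order optimality in $Y$ to identify $-p_Y$ with (a mollification of) $D_xU(Y,\mathcal L(X))$ is also the paper's route.

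The genuine gap is in the sentence ``I enlarge the probability space by an independent common Brownian motion $W$ and evolve $dX_t = -D_pH(D_xU(X_t,m(t)),X_t)\,dt + \sqrt{2\beta}\,dW_t,\ X_0=X$, \ldots so that $\mathcal L(X_t\mid W)=m(t)$.'' This ODE (conditional on $W$, there is no idiosyncratic noise, so it is still a random ODE, not a regularizing SDE) has a drift that is only bounded and one-sided Lipschitz, because $D\tilde u_t$ is merely Lipschitz and semiconcave; as the paper explicitly notes, a selection $\phi^x_t$ with the crucial pushforward property $\tilde m_t=\phi^\cdot_t\sharp m_0$ is \emph{not} known to exist in this generality. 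Mollifying $u$ in $x$, as you propose, does not repair this: replacing $D\tilde u_t$ by $\xi_\eta\ast D\tilde u_t$ in the ODE produces a well-posed flow, but its pushforward of $m_0$ is no longer the solution $\tilde m_t$ of the continuity equation, so the consistency $\mathcal L(X_t\mid W)=m(t)$ and hence the dynamic-programming identity both fail for the mollified flow. The paper sidesteps this by a separate approximation: it uses the sequence $(\tilde u^N,\tilde m^N,\tilde M^N)$ from \cite{CaSo20} (Lemmas 3.6, 3.8 and the proof of Theorem 3.8 there), for which the flow $\phi^{N,x}_t$ \emph{does} exist and satisfies $\tilde m^N_t=\phi^{N,\cdot}_t\sharp m_0$, runs the whole argument at fixed $N$ and $h$, and only then lets $N\to\infty$ using the a.s.\ convergence of $\tilde m^N\to\tilde m$ in $C^0([0,T],\Pw)$ and $L^\infty$-weak-$\star$ together with the $L^{d+1}$ convergence of $\tilde u^N\to\tilde u$. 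Without that approximation layer your derivation of the inequality \eqref{cond2BIS} is not justified, because the object $X_t$ on which it rests may not exist.

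A secondary, smaller issue: you invoke the convexity of $H$ via the subgradient inequality at $-p_Y$ to ``produce exactly'' $-\E[H(-p_Y,Y)]$, but this introduces a residual $\E[D_pH(-p_Y,Y)\cdot(D_xU(Y,\mathcal L(X))+p_Y)]$ that you do not show vanishes. In the paper the mechanism is different: the $\eta$-mollification $\xi_\eta\ast U$ is first introduced precisely because $D_xU(Y,\mathcal L(X))$ is not a well-defined random variable when $\mathcal L(Y)$ is not absolutely continuous; then the first-order optimality in $Y$ gives the \emph{identity} $D_Y\Phi(\bar X_\eta,\bar Y_\eta)+p_Y=-D\tilde u^\eta_{t_0}(\bar Y_\eta)$ a.s., and convexity enters only through Jensen to compare $\xi_\eta\ast H(D\tilde u_t,\cdot)$ with $H(D\tilde u^\eta_t,\cdot)$. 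This also yields the a priori bound $|D_Y\Phi+p_Y|\le C$ a.s.\ needed to pass $\eta\to 0$ in $\E[H(-D_Y\Phi-p_Y,\bar Y_\eta)]$, a compactness point your sketch does not address.
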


\begin{proof} The construction of a weak solution relies on the stochastic MFG system studied in \cite{CaSo20}. For this,  we fix a Brownian motion $(W_t)_{t\geq 0}$ defined on a probability space $(\Omega', \mathcal F', \P')$ which is  independent of the space $(\Omega, \mathcal F, \P)$ on which we develop the notion of weak solution. Abusing the notation  we still denote by $\E$ the expectation with respect to the product measure $\P\otimes \P'$. 
\vs
For $t_0\geq0$, let  $(\tilde u,\tilde m, \tilde M)$ be the solution of the  system 
\be\label{stoMFG_Intro}
\left\{\begin{array}{l}
\ds d_t\tilde  u_t = \left[\tilde u_t(x)+ \tilde H_{t_0,t} (D\tilde u_{t}(x),x)-\tilde F_{t_0,t}(x,\tilde m_t)\right]dt +d\tilde M_t \ \    {\rm in} \  \ \R^d\times (t_0,+\infty), \\[2mm] 
\ds \partial_t\tilde m_t =  {\rm div}(\tilde m_tD_p\tilde H_{t_0,t}(D\tilde u_t(x),x)) \ \  {\rm in} \ \ \R^d\times (t_0,+\infty), \\[2mm] 
 \ds \tilde m_{t_0}=m_0 \ \ \text{in} \ \ \R^d, % \ \   (t \to \tilde M)_{t\geq t_0}    \ \ \text{a martingale in $\R^d$},\ \ \ 
\end{array}\right.
\ee
with  
\be\label{takis0}
\begin{split}
%\begin{align}\label{takis0} 
&\tilde H_{t_0,t}(p,x)= H(p, x+\sqrt{2\beta} (W_t-W_{t_0})), \\
& \tilde F_{t_0,t}(x,m)= F(x+\sqrt{2\beta}(W_t-W_{t_0}),(id+\sqrt{2\beta}(W_t-W_{t_0}))\sharp m).
\end{split}
\ee
We recall from \cite{CaSo20} that $(\tilde u,\tilde m, \tilde M)$  is an adapted process such that, for a.e. $x\in \R^d$, $(\tilde M_t(x))$ is a martingale, $\tilde u$ solves the first equation a.s. and a.e. and $\tilde m$ solves the second equation a.s. in the sense of distributions. % (see \cite{CaSo20}). 
\vs

It was shown in Lemma 3.6,  Lemma 3.4 and the proof of Theorem 3.3  all  in \cite{CaSo20} that, if  \eqref{HH} and \eqref{takis22} hold, then \eqref{stoMFG_Intro} has a solution such that, for some  $C_0>0$ which depends only on $H$ and $F$ and all $t\in (t_0,\infty)$ and $z \in \R^d$,  
\be\label{takis100}
\begin{split}
& \|\tilde u_t\|_\infty  + \|D \tilde u_t\|_\infty + \|\tilde M_t\|_\infty + D^2\tilde u_t z\cdot z\leq C,  \ \text{and}\\[1.5mm]
& \text{for a.e. $x\in \R^d$, the process \  $(\tilde M(x))_{t\geq t_0})$ is a continuous martingale}, 
\end{split} 
\ee
and, if $m_0\in L^\infty$ and $M_2(m_0)<+\infty$, then  
\be\label{estierdfn}
\|\tilde m_t\|_\infty\leq  \|m_0\|_\infty e^{C_0(t-t_0)} \ \text{and}  \  M_2(\tilde m_t)\leq M_2(m_0)e^{C_0(t-t_0)} \qquad a.s.
\ee

Note that, since $\tilde u$ is adapted to the filtration generated by $(W_t-W_{t_0})_{t\geq t_0}$,  $\tilde u_{t_0}(x)$ is deterministic and   independent of $t_0$. 
\vs
Let 
 \[U(x,m_0)=\tilde u_{t_0}(x).\]
 
It follows from \eqref{takis100} that $U$ is Lipschitz continuous and semiconcave with respect to $x$ uniformly in $m$. 
\vs
Moreover, $U$ admits a continuous extension on $\R^d\times \mathcal P_2(\R^d)$. This is a consequence of the fact that there exists $C_0>$ such that,  for any  $m_0,m_0'\in \mathcal P_2(\R^d)$ which are absolutely continuous with a bounded density, 
\be\label{takis3.4}
|U(x,m_0)-U(x,m'_0)|\leq C_0{\bf d}_1(m_0,m_0'))^{1/(d+2)}.
\ee
This last estimate follows from Lemma~\ref{takis3.5} which is stated and proved after the end of the ongoing proof. 
\vs
The  aim is to show that $U$, which, in view of \eqref{takis100},  is  bounded, continuous in $(x,m)$ and Lipschitz continuous and semiconcave in $x$ uniformly  in $m$, is a weak solution to \eqref{ME2}.
\vs

The relation between the MFG system \eqref{stoMFG_Intro} and the master equation \eqref{ME2}  is explained from the fact that, for any $(h,x)\in (0,+\infty)\times \R^d$ and a.s., 
\be\label{rel.DynBIS}
U(x+\sqrt{2\beta}(W_{t_0+h}-W_{t_0}), (id+\sqrt{2\beta}(W_{t_0+h}-W_{t_0})) \sharp \tilde m_{t_0+h}) = \tilde u_{t_0+h}(x).
\ee
This  is the subject of Lemma~\ref{takis3.6} which is  stated and proved after the end of the ongoing proof. 
\vs

Following the discussion about the connection between subdifferentials and subjets in section~1 and Definition~\ref{def.wealsol}, we fix a  $C^2$-test function $\Phi: L^2\times L^2\to \R$ and assume that the map 
\be\label{cond.test0}
\begin{split}
\ds & (X,Y) \to  \E\left[ U(X, \mathcal L(X))-U(Y, \mathcal L(X))\right]
-\Phi(X,Y)
+\ep  \left\|\frac{d\mathcal L(X)}{d\lambda}\right\|_{\infty}\\[1.5mm]
& \text{achieves a minimum $I$ at $(\bar X, \bar Y)$.}
\end{split}
\ee 

Note that, without loss of generality, we may  assume that this minimum is strict and that $-\Phi$ has a quadratic growth. 
\vs

We claim that 
%In conclusion, one can pass to the limit in the whole expression \eqref{cond2TER} and obtain
\be\label{takis101}
\begin{split}
0 \leq &\;  \hat U( \bar X, \bar Y) -\beta \Lambda \left(D^2_{(\bar X,\bar Y)}\Phi(\bar  X,\bar Y)\right)
 - \E\left[F( \bar X, \mathcal L( \bar X))-F( \bar Y, \mathcal L( \bar X))\right] \\
& +\E\left[ H( D_xU( \bar X, \mathcal L( \bar X)),\bar X)-H(- D_Y\Phi(\bar X, \bar Y), \bar Y)\right]\\
& - \E \left[ (D_xU( \bar X, \mathcal L( \bar X))-D_X\Phi(\bar X,\bar Y))\cdot D_pH(D_xU( \bar X, \mathcal L( \bar X)), \bar X)\right]\\
&  +C\ep\left(1+ \left\|\frac{d\mathcal L(\bar X)}{d\lambda}\right\|_{\infty}\right), 
\end{split}
\ee
which is the condition needed for $U$ to be a weak solution. 
 \vs

In order to handle terms of the form $H(D_xU(Y,\mathcal L(X)), Y)$, we need to regularize $U$ with respect to the space variable. For this,  we  fix a smooth, nonnegative kernel with compact support $\xi$ and, for $\eta\in(0,\ep)$ small, we consider the mollifier $\xi_\eta(x)= \eta^{-d}\xi(x/\eta)$. 
\vs
The uniform in $m$ Lipschitz continuity of  $U$ with respect to $x$ and the Lipschitz  continuity of $\tilde u_t$ yield that, for a uniformly small $\eta$,  
\be\label{takis102}
\|\xi_\eta\ast_x U(\cdot, m)-U(\cdot, m)\|\leq C\eta \ \ \text{and} \ \ \|\xi_\eta\ast_x \tilde u_t -\tilde u_t\|\leq C\eta.
\ee
%(for $\tilde u_t$, the convolution is in space only). 
\vs

It follows from   Stegall's Lemma, that, for all $\eta>0$ small, there exist  $p_X, p_Y\in L^2$ such that 
$$
\|p_X\|_{2}+\|p_Y\|_{2} \leq \eta
$$
and the map 
\be\label{cond.test}
\begin{split}%\label{cond.test}
(X,Y)\to &  \E\left[ U(X, \mathcal L(X))-\xi_\eta\ast U(\cdot, \mathcal L(X))(Y)\right]
-\Phi(X,Y)-\E\left[ p_X\cdot X+p_Y\cdot Y\right]\\
& +\ep  \left\|\frac{d\mathcal L(X)}{d\lambda}\right\|_{\infty}\\%\notag
& \text{achieves a minimum $I_\eta$ at some point $(\bar X_\eta, \bar Y_\eta)\in L^2_{ac}\times L^2$.}
\end{split}
\ee
% a minimum at some point $(\bar X_\eta, \bar Y_\eta)\in L^2_{ac}\times L^2$. 
%\vs
%
The main step of the ongoing  proof  is to show that %consists in showing that
\be\label{cond2TER}
\begin{split}
0 \leq &\;  \hat U( \bar X_\eta,  \bar Y_\eta) -\beta \Lambda \left( D^2_{(X,Y)}\Phi(\bar X_\eta,\bar Y_\eta) \right)
 - \E\left[F( \bar X_\eta, \mathcal L( \bar X_\eta))-F( \bar Y_\eta, \mathcal L( \bar X_\eta))\right]\\
& +\E\left[ H(  D_xU( \bar X_\eta, \mathcal L( \bar X_\eta)),\bar X_\eta)-H(-D_Y\Phi(\bar X_\eta, \bar Y_\eta)-p_Y,\bar Y_\eta)\right]\\
& - \E \left[ (D_xU( \bar X_\eta, \mathcal L( \bar X_\eta))-D_X\Phi(\bar X_\eta,\bar Y_\eta)-p_X)\cdot D_pH( D_xU( \bar X_\eta, \mathcal L( \bar X_\eta)),\bar X_\eta)\right]\\
&  +C\ep\left(1+ \left\|\frac{d\mathcal L(\bar X_\eta)}{d\lambda}\right\|_{\infty}\right)+C\eta
\end{split}
\ee
and 
\be\label{cond2TER+}
|D_Y\Phi(\bar X_\eta, \bar Y_\eta)+p_Y|\leq C,\;  {\rm a.s.}
\ee
\vs

We continue with the proof of  \eqref{takis101} and establish   \eqref{cond2TER} and \eqref{cond2TER+} later.
\vs 
For the remainder  of the argument all the limits are taken as $\eta\to 0$. Hence we will not be repeating this fact.
\vs

It is clear that $I_\eta \to I$. Moreover, the fact that  the minimum in \eqref{cond.test0} is strict yields  that \[(\bar X_\eta, \bar Y_\eta) \to (\bar X,\bar Y) \ \ \text{in $L^2\times L^2$,}\] 
\vs
%It also clear that $I_\eta \to I$.
%
% the minimum in \eqref{cond.test}. It is clear that  the $I_\eta$'s 
%tend to the minimum in  \eqref{cond.test0}. 
and, thus,  
\[\left\|\dfrac{d\mathcal L(\bar X_\eta)}{d\lambda}\right\|_{\infty} \to \left\|\dfrac{d\mathcal L(\bar X)}{d\lambda}\right\|_{\infty}.\]

In addition, since  $ \bar X_\eta \to \bar X$ in $L^2$,  it follows that the density of $m^\eta=\mathcal L(\bar X_\eta)$, which is uniformly bounded, converges weakly-$\star$ to the density of $m=\mathcal L(\bar X)$. 
\vs

The uniform continuity of $U$  in both variables and the uniform semiconcavity in $x$ allows to pass to the limit   in the terms 
\vskip-.1in
\[
\E\left[ H(  D_xU( \bar X_\eta, \mathcal L( \bar X_\eta)),\bar X_\eta)\right]
= \int_{\R^d} H(D_xU(x,m^\eta),x)m^\eta(x)dx 
\]
and 
\begin{align*}
&\E \left[ (D_xU( \bar X_\eta, \mathcal L( \bar X_\eta)) \cdot D_pH( D_xU( \bar X_\eta, \mathcal L( \bar X_\eta)),\bar X_\eta)\right]\\
& \qquad=
\int_{\R^d}(D_xU(x, m^\eta)\cdot D_pH( D_xU( x, m^\eta),x)) m^\eta(x)dx .
\end{align*}
Similarly,  since 
\[\begin{split}
D_X\Phi(\bar X_\eta,\bar Y_\eta)+p_X \to D_X\Phi(\bar X,\bar Y) \ \ \text{in $L^2$ \ 
%, while, since the $D_pH( D_xU( \bar X_\eta, \mathcal L( \bar X_\eta)),\bar X_\eta)$'s are  bounded and converge to 
and}\\[1.5mm]
\text{$D_pH( D_xU( \bar X_\eta, \mathcal L( \bar X_\eta)),\bar X_\eta) \to D_pH( D_xU( \bar X, \mathcal L( \bar X)),\bar X)$ a.s.},
\end{split}
\]

 it is possible to pass in the  limit  in 
$$
\E\left[(-D_X\Phi(\bar X_\eta,\bar Y_\eta)-p_X)\cdot D_pH( D_xU( \bar X_\eta, \mathcal L( \bar X_\eta)),\bar X_\eta)\right].
$$
Finally, since $D_Y\Phi(\bar X_\eta, \bar Y_\eta)+p_Y \to D_Y\Phi(\bar X, \bar Y)$ in $L^2$, in view of  \eqref{cond2TER+}, we can also pass to the limit  in 
$\ds \E\left[ H(-D_Y\Phi(\bar X_\eta, \bar Y_\eta)-p_Y,\bar Y_\eta)\right].$
\vs
In conclusion, one can pass to the limit in the whole expression \eqref{cond2TER} and obtain \eqref{takis101}.\\

We now return  to the proofs of \eqref{cond2TER} and \eqref{cond2TER+}. To  simplify  the notation, we  write $\bar X$, $\bar Y$ and $\overline \Phi$ for $\bar X_\eta$, $\bar Y_\eta$ and $\Phi+ \E[p_X\cdot  X +p_Y\cdot  Y]$ respectively, and  assume that the map 
\be\label{cond.test2}
\begin{split}
\hskip-.75in  (X,Y)\to  \E\left[ U(X, \mathcal L(X))-\xi_\eta\ast U(\cdot, \mathcal L(X))(Y)\right] 
-\overline \Phi(X,Y)\\[2mm]
 +\ep \left\|\frac{d\mathcal L(X)}{d\lambda}\right\|_{\infty} \ \ \text{ achieves  a minimum at $(\bar X, \bar Y)$}
\end{split}
\ee
which,  without loss of generality, we assume that is $0$. 

\vs 
%For the convenieWe need to prove that 
%\be\begin{split}\label{cond2QUAT}
%0 \leq &\;  \hat U( \bar X,  \bar Y) -\beta \Lambda \left( D^2_{(X,Y)}\Phi(\bar X,\bar Y) \right)
% - \E\left[F( \bar X, \mathcal L( \bar X))-F( \bar Y, \mathcal L( \bar X))\right]\\
%& +\E\left[ H(  D_xu( \bar X, \mathcal L( \bar X)),\bar X)-H(-D_Y\Phi(\bar X, \bar Y),\bar Y)\right]\\
%& - \E \left[ (D_xu( \bar X, \mathcal L( \bar X))-D_X\Phi(\bar X,\bar Y))\cdot D_pH( D_xu( \bar X, \mathcal L( \bar X)),\bar X)\right]\\
%&  +C\ep\left(1+ \left\|\frac{d\mathcal L(\bar X)}{d\lambda}\right\|_{\infty}+ \|\bar X\|^2_{L^2}\right). 
%\end{split}
%\ee
%and 
%\be\label{cond2QUAT+}
%|D_Y\Phi(\bar X_\delta, \bar Y)|\leq C\; {\rm a.s.}
%\ee
%
%

Let $(\tilde u, \tilde m)$ be the solution of \eqref{stoMFG_Intro} with initial condition $\bar m_{0}= \mathcal L(\bar X)$. 
\vs
In order to use \eqref{cond.test2}, we now need to lift the (random) flow $(\tilde m_t)_{t\geq t_0}$ to $L^2$. 
\vs
The natural thing  to do is to find a solution $(\phi^x)_{t\geq t_0}$ of  the ode (with random coefficients)
\be\label{defX}
 \frac{d}{dt} \phi^x_t= -D_p\tilde H_{t_0,t} (D\tilde u_t(\phi^x_t),\phi^x_t) \ \text{in} \ (t_0,\infty), \ \ \  \  \phi^x_{t_0}=x, 
\ee
%and obtain a solution $(\phi^x)_{t\geq t_0}$ 
which is adapted to the filtration generated by $(W_t-W_{t_0})_{t\geq t_0}.$ Then one would expect that $\tilde m_t(x)= \phi^{x}\sharp \bar m_0$, so that $X_t= \phi^{\bar X}_t$ would have the property that  $\tilde m_t= \mathcal L(X_t|W)$.
\vs
Unfortunately,  the existence of such a flow is not known in general without adding some randomness to the flow or some extra structure condition on the data; see the discussion in Section 2.6 of \cite{CaSo20}. 
\vs
To overcome this issue, we proceed by approximation. It follows from  Lemma 3.6, Lemma 3.8 and the proof of Theorem 3.8 in \cite{CaSo20} that there exists a sequence $(\tilde u^N, \tilde m^N, \tilde M^N)_{N\geq 1}$ such that, for any $T>t_0$ and $R>0$, 
\be\label{kjhzesbrdf}
\underset{N\to\infty}\lim \sup_{t\in [t_0,T]}  \E\left[\|\tilde u_t-\tilde u^N_t\|_{L^\infty (B_R)}^{d+1}\right]=0,  \ \  \ \  
\|\tilde m^N_t\|_\infty\leq  \|m_0\|_\infty e^{C_0(t-t_0)} 
\ee
%\be%\label{kjhzesbrdf}
%\|\tilde m^N_t\|_\infty\leq  \|m_0\|_\infty e^{C_0(t-t_0)} 
%\ee
and, a.s., 
$$
\underset{N\to\infty}\lim\tilde m^N= \tilde m\; {\rm in}\; C^0([0,T], \mathcal P_2(\R^d)) \  \text{and in }  \ L^\infty-{\rm weak-}\star. 
$$
Then, following \cite{CaHa},  we can solve, for a.e. $x\in \R^d$,  the ode 
\be\label{defXN}
 \frac{d}{dt} \phi^{N,x}_t= -D_p\tilde H^N_{t_0,t} (D\tilde u^N_t(\phi^{N,x}_t),\phi^{N,x}_t), \qquad \phi^{N,x}_{t_0}=x
\ee
in a unique way and, as shown in  \cite{CaHa}, we have $\tilde m^N_t(x)= \phi^{N,x}_t\sharp \bar m_0$. 
\vs
We  set $X^N_t= \phi^{N, \bar X}_t$ and remark that by definition  $\tilde m^N_t= \mathcal L(X^N_t|W)$; note that $X^N_t=X^N_t(\omega,\omega')$ where $(\omega,\omega')\in \Omega\times \Omega'$.  
\vs
If $\Psi:L^2\to \R$ is continuous, we denote by 
$\Psi(X^N_t)$ the random variable $\omega'\to \Psi(X^N_t(\cdot, \omega'))$ on $\Omega'$, and observe that 
$$
 \Psi(X^N_t+\sqrt{2\beta}(W_{t_0+h}-W_{t_0}))= \Psi(X^N_t(\cdot, \omega')+z)_{z=\sqrt{2\beta}(W_{t_0+h}-W_{t_0})(\omega')}.
$$
Set $\tilde u^\eta_{t}(x)=( \xi_\eta\ast \tilde u_t)(x)$ and,  for all  $(X,Y)\in L^2\times L^2$, 
$$
\hat U^\eta(X,Y)= \E\left[ U(X, \mathcal L(X))-\xi_\eta\ast U(\cdot, \mathcal L(X))(Y)\right].% \qquad \forall (X,Y)\in L^2\times L^2.
$$

To complete the ongoing proof we need two additional results which we state below as separate lemmata and present their proof later. 
\begin{lem}\label{lem.laesndfm} Fix $h>0$. For $N$ large enough, depending on $h$,  we have 
\begin{align*}
& \E\Bigl[ \hat U^\eta(X^N_{t_0+h}+\sqrt{2\beta}(W_{t_0+h}-W_{t_0}), \bar Y+\sqrt{2\beta}(W_{t_0+h}-W_{t_0}))\Bigr]-\hat U^\eta(\bar X, \bar Y)\\
& \leq  h \E\left[ (\tilde u_{t_0}(\bar X)+ H(D\tilde u_{t_0}(\bar X),\bar X)-D_p H(D\tilde u_{t_0}(\bar X),\bar X)\cdot D \tilde u_{t_0}(\bar X)- F(\bar X, \bar m_0) \right]\\
& \qquad -h \E\left[ \tilde u_{t_0}(\bar Y)+  H(- D_Y\Phi(\bar X,\bar Y),\bar Y)-  F(\bar Y, \bar m_0) \right]+C\eta h+o(h),
\end{align*}
where $\bar m_0=\mathcal L(\bar X)$. In addition, \eqref{cond2TER+} holds. 
\end{lem}

\begin{lem}\label{lem.laesndfm2} For $N$ large enough depending on $h$, 
\begin{align*}
&   \E\Bigl[  \Phi(X^N_{t_0+h}+\sqrt{2\beta}(W_{t_0+h}-W_{t_0}), \bar Y+\sqrt{2\beta}(W_{t_0+h}-W_{t_0}))\Bigr]\\
&\geq   \Phi(\bar X,\bar Y)+ h \E\Bigl[  - D_p H (D\tilde u_{t_0}(\bar X),\bar X)\cdot D_X\Phi(\bar X,\bar Y)\Bigr]+\beta h \Lambda (D^2_{(X,Y)}\Phi(\bar X,\bar Y))+o(h).
 \end{align*}
\end{lem}

To prove  \eqref{cond2TER} we recall that  the minimum in \eqref{cond.test2} is assumed to be $0$, and 
we find, using \eqref{estierdfn}, \eqref{kjhzesbrdf} and Lemma~\ref{lem.laesndfm},  that, for $N$ large enough, 
\begin{align*}
&  \E\Bigl[ \Phi(X^N_{t_0+h}+\sqrt{2\beta}(W_{t_0+h}-W_{t_0}), \bar Y+\sqrt{2\beta}(W_{t_0+h}-W_{t_0}))\Bigr]\\
& \leq \E\Bigl[ \hat U^\eta(X^N_{t_0+h}+\sqrt{2\beta}(W_{t_0+h}-W_{t_0}), \bar Y+\sqrt{2\beta}(W_{t_0+h}-W_{t_0}))\Bigr]\\
& \quad + \ep\E'\Bigl[ \left\|\frac{d\mathcal L(X_{t_0+h}^N-\sqrt{2\beta}(W_{t_0+h}-W_{t_0})|W)}{d\lambda}\right\|_{\infty} \Bigr] \\ 
& \leq \hat U^\eta(\bar X, \bar Y) +  h \E\left[ (\tilde u_{t_0}(\bar X)+ H(D\tilde u_{t_0}(\bar X),\bar X)-D_p H(D\tilde u_{t_0}(\bar X),\bar X)\cdot D u_{t_0}(\bar X)- F(\bar X, \bar m_0) \right]\\
& \qquad -h \E\left[ \tilde u_{t_0}(\bar Y)+  H(- D_Y\Phi(\bar X,\bar Y),\bar Y)-  F(\bar Y, \bar m_0) \right]\\
& \qquad + \ep (1+Ch) \|\bar m_0\|_\infty+ C\eta h+o(h). 
\end{align*}
We have also seen from  Lemma \ref{lem.laesndfm2} that, for $N$ large enough, 
\begin{align*}
&   \E\Bigl[  \Phi(X^N_{t_0+h}+\sqrt{2\beta}(W_{t_0+h}-W_{t_0}), \bar Y+\sqrt{2\beta}(W_{t_0+h}-W_{t_0}))\Bigr]\\
&\geq   \Phi(\bar X,\bar Y)+ h \E\Bigl[  - D_p H (D\tilde u_{t_0}(\bar X),\bar X)\cdot D_X\Phi(\bar X,\bar Y)\Bigr]+\beta h \Lambda (D^2_{(X,Y)}\Phi(\bar X,\bar Y))+o(h).
 \end{align*}
 Combining the last two inequalities and using that  $\hat U^\eta(\bar X,\bar Y)= \Phi(\bar X,\bar Y)-\ep \|\bar m_0\|_\infty$ and $D_xU(x,\bar m_0)=D\tilde u_{t_0}(x)$, we get, letting $h\to 0$, 
 \begin{align*}
&   \E\Bigl[  - D_p H (D_xU(\bar X, \mathcal L(\bar X)),\bar X)\cdot D_X\Phi(\bar X,\bar Y)\Bigr]+\beta \Lambda (D^2_{(X,Y)}\Phi(\bar X,\bar Y)) \\
& \leq \E\left[U(\bar X,\mathcal L(\bar X))+ H(D_xU(\bar X,\mathcal L(\bar X)),\bar X)\right]\\
& \qquad -\E\left[ D_pH(D_xU(\bar X,\mathcal L(\bar X)),\bar X)\cdot D_xU(\bar X,\mathcal L(\bar X))-F(\bar X,\mathcal L(\bar X)) \right]\\ 
& \qquad -  \E\left[U(\bar Y,\mathcal L(\bar X))+ H(- D_Y\Phi(\bar X,\bar Y),\bar Y)- F(\bar Y,\mathcal L(\bar X)) \right]\\ 
& \qquad + C \ep  (1+\|\bar m_0\|_\infty)+C\eta,
\end{align*}
and, after some rearranging, % we find: 
\begin{align*}
0 \leq &\;  \hat U( \bar X, \bar Y) -\beta \Lambda \left(D^2_{(\bar X,\bar Y)}\Phi(\bar  X,\bar Y)\right)
 - \E\left[F( \bar X, \mathcal L( \bar X))-F( \bar Y, \mathcal L( \bar X))\right] \notag\\
& +\E\left[ H( D_xU( \bar X, \mathcal L( \bar X)),\bar X)-H(- D_Y\Phi(\bar X, \bar Y), \bar Y)\right]\\
& - \E \left[ (D_xU( \bar X, \mathcal L( \bar X))-D_X\Phi(\bar X,\bar Y))\cdot D_pH( \bar X, D_xU( \bar X, \mathcal L( \bar X)))\right]\notag\\
&  +C\ep\left(1+ \left\|\frac{d\mathcal L(\bar X)}{d\lambda}\right\|_{\infty}\right)+C\eta, \notag
\end{align*}
which is \eqref{cond2TER}. 

\end{proof}

We continue with the statements and proofs of the technical facts used in the previous proof.

\begin{lem}\label{takis3.5} Assume \eqref{HH} and \eqref{takis22}. Then there exists $C_0>0$ such that, for all $m_0,m_0'\in \mathcal P_2(\R^d)$ which are absolutely continuous with  bounded density, \eqref{takis3.4} holds. 
%$$
%|U(x,m_0)-U(x,m'_0)|\leq C_0{\rm d}_1(m_0,m_0'))^{1/(d+1)}
%$$
\end{lem}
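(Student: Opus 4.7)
The plan is to denote by $(\widetilde u^i,\widetilde m^i,\widetilde M^i)$, $i=1,2$, the solution of \eqref{stoMFG_Intro} with initial measure $m_0$ (if $i=1$) or $m_0'$ (if $i=2$), so that $U(x,m_0)=\widetilde u^1_{t_0}(x)$ and $U(x,m_0')=\widetilde u^2_{t_0}(x)$. I will prove \eqref{takis3.4} by combining three ingredients: a time-integrated $L^2$ bound on $F(\cdot,\widetilde m^1)-F(\cdot,\widetilde m^2)$ coming from the standard MFG monotonicity computation, a sup-norm stability estimate for the backward Hamilton-Jacobi equation, and the classical interpolation
\begin{equation*}
|f(x_0)|^{d+2}\le C\,\|\nabla f\|_\infty^d\,\|f\|_{L^2(\R^d)}^2,
\end{equation*}
valid for bounded Lipschitz $f:\R^d\to\R$, which upgrades an $L^2$ bound to $\|f\|_\infty\le C\|f\|_{L^2}^{2/(d+2)}$ with a constant depending only on $\|\nabla f\|_\infty$.

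For the monotonicity step I will set $g(t)=\E\int_{\R^d}(\widetilde u^1_t-\widetilde u^2_t)(\widetilde m^1_t-\widetilde m^2_t)dx$ and apply Ito's formula. Since the martingale parts $d\widetilde M^i$ vanish in expectation and $\widetilde m^i$ has bounded variation in time (so no quadratic covariation appears), using the HJ equations for $\widetilde u^i$ together with an integration by parts in the continuity equations for $\widetilde m^i$ rearranges $g'(t)$ as $g(t)$ plus two convexity-defect terms of the form $\pm\E\int\widetilde m^i_t[H(D\widetilde u^1_t,\cdot)-H(D\widetilde u^2_t,\cdot)-D_pH(D\widetilde u^i_t,\cdot)\cdot(D\widetilde u^1_t-D\widetilde u^2_t)]dx$ (each of the correct sign by convexity of $H$ in the gradient variable per \eqref{takis1002}), minus the monotonicity term $\E\int(\widetilde F^1_t-\widetilde F^2_t)(\widetilde m^1_t-\widetilde m^2_t)dx$ which by \eqref{takis22} is bounded below by $\alpha_F\E\|\widetilde F^1_t-\widetilde F^2_t\|_{L^2}^2$. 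Hence $g'(t)\le g(t)-\alpha_F\E\|\widetilde F^1_t-\widetilde F^2_t\|_{L^2}^2$. Multiplying by $e^{-t}$ and integrating from $t_0$ to $+\infty$ (the boundary term at $+\infty$ vanishing by the uniform $L^\infty$ bound on $\widetilde u^i$ from \eqref{takis100}), and using Kantorovich-Rubinstein duality together with the uniform Lipschitz bound on $\widetilde u^i_{t_0}$ to estimate $g(t_0)\le C\,\dk(m_0,m_0')$, I obtain
\begin{equation*}
\int_{t_0}^{+\infty}e^{-(t-t_0)}\E\|\widetilde F_{t_0,t}(\cdot,\widetilde m^1_t)-\widetilde F_{t_0,t}(\cdot,\widetilde m^2_t)\|_{L^2}^2\,dt\le C\,\dk(m_0,m_0').
\end{equation*}

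For the stability and interpolation step, by the comparison principle applied conditionally on the Brownian path $W$ to the backward HJ equation with discount $1$ (equivalently, by the stochastic control representation of $\widetilde u^i$ as the value function of a control problem with common noise), one has pathwise
\begin{equation*}
\sup_{x\in\R^d}|\widetilde u^1_{t_0}(x)-\widetilde u^2_{t_0}(x)|\le\int_{t_0}^{+\infty}e^{-(s-t_0)}\|\widetilde F_{t_0,s}(\cdot,\widetilde m^1_s)-\widetilde F_{t_0,s}(\cdot,\widetilde m^2_s)\|_\infty\,ds.
\end{equation*}
The left-hand side being deterministic, I may take $\E$ of the right-hand side. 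Applying the interpolation pathwise to $f=\widetilde F_{t_0,s}(\cdot,\widetilde m^1_s)-\widetilde F_{t_0,s}(\cdot,\widetilde m^2_s)$ (uniformly Lipschitz and bounded by \eqref{FG}), then Jensen's inequality (concavity of $y\mapsto y^{1/(d+2)}$) to bound $\E\|f\|_{L^2}^{2/(d+2)}$ by $(\E\|f\|_{L^2}^2)^{1/(d+2)}$, and finally Hölder in time with exponents $d+2$ and $(d+2)/(d+1)$ against the probability measure $e^{-(s-t_0)}ds$, yields
\begin{equation*}
|U(x,m_0)-U(x,m_0')|\le C\Bigl(\int_{t_0}^{+\infty}e^{-(s-t_0)}\E\|\widetilde F_{t_0,s}(\cdot,\widetilde m^1_s)-\widetilde F_{t_0,s}(\cdot,\widetilde m^2_s)\|_{L^2}^2\,ds\Bigr)^{1/(d+2)}\le C\,\dk(m_0,m_0')^{1/(d+2)},
\end{equation*}
which is \eqref{takis3.4}.

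The main obstacle will be the rigorous justification of Ito's formula and the HJ comparison principle in the SPDE framework of \eqref{stoMFG_Intro}. For the Ito computation I will work with the smooth approximations $(\widetilde u^N,\widetilde m^N,\widetilde M^N)$ provided by \eqref{kjhzesbrdf} where the calculation is classical, and pass to the limit using the convergence statements recalled just after \eqref{kjhzesbrdf}; for the stability estimate, I will either invoke the construction in \cite{CaSo20} directly or condition on $W$ and apply the classical HJ comparison principle to a pathwise-reformulated equation in which the martingale part has been absorbed.
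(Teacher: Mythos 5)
Your argument matches the paper's proof in all essentials: the same monotonicity inequality from the cited Lemma 3.7 of \cite{CaSo20}, the same use of \eqref{takis22} plus the Lipschitz-in-$x$ interpolation $\|f\|_\infty^{d+2}\le C\|\nabla f\|_\infty^d\|f\|_{L^2}^2$ to pass from the duality bound to an $L^\infty$ bound on $F_1-F_2$, the same optimal-control representation for the stability of $\tilde u_{t_0}$, and H\"older in the discounted time variable to close. You apply the interpolation after the stability step rather than before (the paper folds it into the $L^2\to L^\infty$ passage to obtain $\E\int e^{-t}\|F_1-F_2\|_\infty^{d+2}dt\le C\mathbf d_1$ directly), but this is a cosmetic reordering.

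One small caveat: the claim that the stability bound $\sup_x|\tilde u^1_{t_0}(x)-\tilde u^2_{t_0}(x)|\le\int e^{-(s-t_0)}\|\tilde F^1_s-\tilde F^2_s\|_\infty ds$ holds \emph{pathwise} is an overstatement, because the martingale parts $\tilde M^1,\tilde M^2$ do not cancel in the difference $\tilde u^1-\tilde u^2$ and a pathwise deterministic comparison is not available. What is true, and what the paper invokes via its Proposition 2.7, is the stochastic control representation, which yields the bound with $\E'$ already present on the right. Since you also mention this as the equivalent route, and since the determinism of the left-hand side is all you actually use, this does not affect the validity of your proof -- just note that the control representation, not the conditional pathwise comparison, is the clean way to justify the step.
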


\begin{proof} We only present a sketch, since the complete proof can be concluded  by standard arguments. 
\vs
Let $(\tilde u,\tilde m,\tilde M)$ and $(\tilde u',\tilde m',\tilde M')$ be the solutions of  \eqref{stoMFG_Intro} with initial condition $m_0$ and $m_0'$ respectively. Following the proof of Lemma 3.7 in \cite{CaSo20}, we have 
\begin{align*}
&\E\left[ \int_{t_0}^{+\infty}\int_{\R^d}e^{-t} \left(\tilde F_{t_0,t}(x, \tilde m_t)-\tilde F_{t_0,t}(x, \tilde m_t')\right)(\tilde m_t(x)-\tilde m_t'(x))dxdt\right] \\
& \qquad \leq -\E\left[
\int_{\R^d} (\tilde u_{t_0}(x)-\tilde u'_{t_0}(x))(m_0(x)-m_0'(x))dx\right]. 
\end{align*}
The strong monotonicity  of  $F$ and its Lipschitz regularity in $x$ uniformly in $m$ on the one hand, and the uniform Lipschitz regularity in $x$ of $\tilde u$ and $\tilde u'$ on the other hand, yield by an interpolation inequality
\[
\alpha_F \E\left[ \int_0^{+\infty}e^{-t} \left\|\tilde F_{t_0,t}(\cdot, \tilde m_t)-\tilde F_{t_0,t}(\cdot, \tilde m_t')\right\|_\infty^{d+2} dt\right]  \leq C{\bf d_1}(m_0,m_0'). 
%\end{align*}
\]
Then  the optimal control representation of the solution (Proposition 2.7 of \cite{CaSo20}) and H\"{o}lder's inequality give
\[
|\tilde u_{t_0}(x)-\tilde u'_{t_0}(x)| \leq \E\left[ \int_0^{+\infty}e^{-t} \left\|\tilde F_{t_0,t}(\cdot, \tilde m_t)-\tilde F_{t_0,t}(\cdot, \tilde m_t')\right\|_\infty dt\right] \leq C{\bf d_1}^{1/(d+2)}(m_0,m_0').
\]

Using the definition of $U$, we may now  conclude. 

\end{proof}

\begin{lem} \label{takis3.6} For any $(h,x)\in (0,+\infty)\times \R^d$ and a.s. 
\be\label{rel.Dyn}
U(x+\sqrt{2\beta}(W_{t_0+h}-W_{t_0}), (id+\sqrt{2\beta}(W_{t_0+h}-W_{t_0})) \sharp \tilde m_{t_0+h}) = \tilde u_{t_0+h}(x).
\ee
\end{lem}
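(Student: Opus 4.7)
The plan is to exploit the time-independence of the data $H$ and $F$, together with the uniqueness of the MFG system \eqref{stoMFG_Intro}, in order to reduce \eqref{rel.Dyn} to a pathwise change-of-variables computation. Throughout I work on the probability space $(\Omega', \mathcal F', \P')$ carrying $W$, and I set $Z := \sqrt{2\beta}(W_{t_0+h}-W_{t_0})$.

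First I would introduce the shifted triple
\[
v_t(x) := \tilde u_t(x-Z), \qquad n_t := (id+Z)\sharp \tilde m_t, \qquad N_t(x) := \tilde M_t(x-Z) \qquad (t\geq t_0+h),
\]
and check that it satisfies the MFG system \eqref{stoMFG_Intro} on $[t_0+h,+\infty)$ with starting time $t_0+h$ and initial measure $n_{t_0+h}=(id+Z)\sharp \tilde m_{t_0+h}$. Using the identity $\sqrt{2\beta}(W_t - W_{t_0}) = Z + \sqrt{2\beta}(W_t - W_{t_0+h})$ together with the elementary fact $(id+Z)\sharp((id+a)\sharp \mu) = (id+Z+a)\sharp \mu$ for constants $a\in\R^d$, a direct inspection of \eqref{takis0} yields $\tilde H_{t_0,t}(p,y-Z) = \tilde H_{t_0+h,t}(p,y)$ and $\tilde F_{t_0,t}(y-Z, \tilde m_t) = \tilde F_{t_0+h,t}(y, n_t)$. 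Changing variables in the two equations of \eqref{stoMFG_Intro} then shows that $(v_t, n_t, N_t - N_{t_0+h})$ formally satisfies the shifted MFG system.

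The delicate step is verifying the martingale property of $(N_t - N_{t_0+h})_{t\geq t_0+h}$ with respect to the ``fresh'' filtration $\sigma((W_s - W_{t_0+h})_{s\in [t_0+h,t]})$, since a priori $\tilde M_t$ is only known to be a martingale for the larger filtration generated by $(W_s - W_{t_0})_{s\in[t_0, t]}$. I would proceed by conditioning on $\mathcal G := \sigma(W_s - W_{t_0}: s\in[t_0, t_0+h])$, which freezes both $Z$ and the initial measure $n_{t_0+h}$; the independence of Brownian increments before and after $t_0+h$ then delivers the required martingale property for $\P'$-almost every realization of $\mathcal G$.

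Conditional on $\mathcal G$, the triple $(v, n, N - N_{t_0+h})$ is then a solution of the very MFG system defining $U(\cdot, n_{t_0+h})$ with starting time $t_0+h$. Since $U(x, m_0) = \tilde u_{t_0}(x)$ is independent of the choice of starting time $t_0$ (as recalled just above \eqref{takis3.4}), the uniqueness part of the well-posedness result of \cite{CaSo20} yields $v_{t_0+h}(x) = U(x, n_{t_0+h})$ a.s., which upon substituting back $v_{t_0+h}(x) = \tilde u_{t_0+h}(x-Z)$ and renaming $x \mapsto x+Z$ is precisely \eqref{rel.Dyn}. The main obstacle is the technical martingale/filtration point in the third paragraph; once it is in place, the rest is essentially book-keeping.
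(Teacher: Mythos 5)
Your proposal is correct and follows essentially the same route as the paper: shift $(\tilde u,\tilde m,\tilde M)$ by $Z=\sqrt{2\beta}(W_{t_0+h}-W_{t_0})$, verify via \eqref{takis0} that the shifted triple solves \eqref{stoMFG_Intro} on $(t_0+h,+\infty)$ with initial measure $(id+Z)\sharp\tilde m_{t_0+h}$, and then invoke uniqueness and time-translation invariance of the system to identify $v_{t_0+h}$ with $U(\cdot,(id+Z)\sharp\tilde m_{t_0+h})$. The only difference is that you explicitly flag and resolve (by conditioning on $\sigma(W_s-W_{t_0}:s\le t_0+h)$ and using independence of Brownian increments) the filtration issue for the martingale part $N_t-N_{t_0+h}$, a point the paper's proof passes over silently; this is a legitimate refinement rather than a deviation.
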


\begin{proof} Let $(\tilde u^h, \tilde m^h, \tilde M^h)$ be defined, for $t\geq t_0+h$ and $x\in \R^d$, by 
$$
\tilde u^h_t(x)= \tilde u_t(x-\sqrt{2\beta}(W_{t_0+h}-W_{t_0})), \; \tilde m^h_t= (id+{2\beta}(W_{t_0+h}-W_{t_0})) \sharp \tilde m_{t}
$$
and
$$
\tilde M^h_t(x)= \tilde M_t(x-\sqrt{2\beta}(W_{t_0+h}-W_{t_0}))-\tilde M_{t_0+h}(x-\sqrt{2\beta}(W_{t_0+h}-W_{t_0})). 
$$

Recalling \eqref{takis0},   we find, for a.e. $x\in \R^d$, any $t\geq t_0+h$ and a.s., 
\begin{align*}
& \tilde  u^h_t(x)-\tilde  u^h_{t_0+h}(x)  = \tilde u_t(x-\sqrt{2\beta}(W_{t_0+h}-W_{t_0}))-\tilde u_{t_0+h}(x-\sqrt{2\beta}(W_{t_0+h}-W_{t_0}))\\
& = \int_{t_0+h}^t \Big[\tilde u^h_s(x)+ \tilde H_{t_0,s} (D\tilde u^h_{s}(x),x-\sqrt{2\beta}(W_{t_0+h}-W_{t_0}))\\
& \qquad -\tilde F_{t_0,s}(x-\sqrt{2\beta}(W_{t_0+h}-W_{t_0}),\tilde m_s)\Big]ds 
   +\tilde M^h_{t}(x)-\tilde M^h_{t_0+h}(x)  \\
& = \int_{t_0+h}^t \left[\tilde u^h_s(x)+ \tilde H_{t_0+h,s} (D\tilde u^h_{s}(x),x)-\tilde F_{t_0+h,s}(x,\tilde m^h_s)\right]ds +\tilde M^h_{t}(x)-\tilde M^h_{t_0+h}(x),
\end{align*}
since, for $s\geq t_0+h$, 
\begin{align*}
\tilde F_{t_0+h,s}(x,\tilde m^h_s)& = F(x+\sqrt{2\beta}(W_s-W_{t_0+h}),(id+\sqrt{2\beta}(W_s-W_{t_0+h}))\sharp \tilde m^h_s) \\ 
& = \tilde F_{t_0, s}(x -\sqrt{2\beta}(W_{t_0+h}-W_{t_0}), (id-\sqrt{2\beta}(W_{t_0+h}-W_{t_0}))\sharp \tilde m^h_s )\\ 
& = \tilde F_{t_0, s}(x -\sqrt{2\beta}(W_{t_0+h}-W_{t_0}),  \tilde m_s ).
\end{align*}
A similar argument shows  that $\tilde m^h$ is a.s. a weak solution of 
$$
\ds \partial_t\tilde m^h_t =  {\rm div}(\tilde m^h_tD_p\tilde H_{t_0+h,t}(D\tilde u^h_t(x),x)) \ \  {\rm in} \ \ \R^d\times (t_0,+\infty). 
$$
This proves that $(\tilde  u^h , \tilde m^h, \tilde M^h)$ solves \eqref{stoMFG_Intro} on the time interval $(t_0+h, +\infty)$ and  with the initial condition $\tilde m^h_{t_0+h}$. Therefore $U(x,\tilde m^h_{t_0+h})= \tilde u^h_{t_0+h}(x)$ a.s., which implies the result. 

\end{proof}

\begin{proof}[The proof of Lemma~\ref{lem.laesndfm}.] 
The definition of $\hat U^\eta$ gives 
\begin{align*}
& \hat U^\eta(X^N_{t_0+h}+\sqrt{2\beta}(W_{t_0+h}-W_{t_0}), \bar Y+\sqrt{2\beta}(W_{t_0+h}-W_{t_0}))\\
& \qquad = U(X^N_{t_0+h}+\sqrt{2\beta}(W_{t_0+h}-W_{t_0}), \mathcal L(X^N_{t_0+h}+\sqrt{2\beta}(W_{t_0+h}-W_{t_0})|W)) \\
& \qquad \qquad -
\xi_\eta\ast U(\cdot, \mathcal L(X^N_{t_0+h}+\sqrt{2\beta}(W_{t_0+h}-W_{t_0})|W))(\bar Y+\sqrt{2\beta}(W_{t_0+h}-W_{t_0})), 
\end{align*}
where 
\begin{align*}
& \mathcal L(X^N_{t_0+h}+\sqrt{2\beta}(W_{t_0+h}-W_{t_0})|W) = (id+ \sqrt{2\beta}(W_{t_0+h}-W_{t_0}))\sharp \mathcal L(X^N_{t_0+h}|W)\\
&\qquad = 
(id+ \sqrt{2\beta}(W_{t_0+h}-W_{t_0}))\sharp \tilde m^N_{t_0+h}.
\end{align*}
Since,  as $N\to +\infty$ a.s.,  $\tilde m^N_{t_0+h}$ converges weakly to $\tilde m_{t_0+h}$ a.s., 
%we infer 
it follows that, for $N$ large enough, 
\begin{align*}
& \E'\left[ \hat U^\eta(X^N_{t_0+h}+\sqrt{2\beta}(W_{t_0+h}-W_{t_0}), \bar Y+\sqrt{2\beta}(W_{t_0+h}-W_{t_0}))\right]\\
& \qquad \leq \E\Bigl[ U(X^N_{t_0+h}+\sqrt{2\beta}(W_{t_0+h}-W_{t_0}), (id+ \sqrt{2\beta}(W_{t_0+h}-W_{t_0}))\sharp \tilde m_{t_0+h}) \\
& \qquad \qquad -
\xi_\eta\ast U(\cdot, (id+ \sqrt{2\beta}(W_{t_0+h}-W_{t_0}))\sharp \tilde m_{t_0+h})(\bar Y+\sqrt{2\beta}(W_{t_0+h}-W_{t_0}))\Bigr] +h^2/2\\ 
& \qquad = \E\left[ \tilde u_{t_0+h}(X^N_{t_0+h})- \tilde u_{t_0+h}^\eta(\bar Y)\right] +h^2/2, 
\end{align*}
the last equality coming from \eqref{rel.Dyn} and the definition of $\tilde u^\eta$.
\vs

Since 
$$
\E\left[ \tilde u_{t_0+h}(X^N_{t_0+h})\right]= \E\left[ \int_{\R^d} \tilde u_{t_0+h}(x) \tilde m^N_{t_0+h}(x)dx\right] \to \E\left[ \int_{\R^d} \tilde u_{t_0+h}(x) \tilde m_{t_0+h}(x)dx\right]
$$
using the weak convergence of $\tilde m^N$ to $\tilde m$, we can find $N$ large enough such that 
\be\label{lerhzilrneskd1}
\begin{split}
& \E'\Bigl[ \hat U^\eta(X^N_{t_0+h}+\sqrt{2\beta}(W_{t_0+h}-W_{t_0}), \bar Y+\sqrt{2\beta}(W_{t_0+h}-W_{t_0}))\Bigr]\\
& \qquad \leq  \E\left[ \int_{\R^d} \tilde u_{t_0+h}(x) \tilde m_{t_0+h}(x)dx\right]  -  \E\left[\tilde u^\eta_{t_0+h}(\bar Y)\right]+h^2\\
&  \qquad =   \hat U^\eta(\bar X, \bar Y)+ \E\left[ \int_{\R^d} (\tilde u_{t_0+h}(x) m_{t_0+h}(x)- \tilde u_{t_0}(x) m_{t_0}(x))dx\right]\\ 
& \qquad \qquad - \E\left[\tilde u^\eta_{t_0+h}(\bar Y)-\tilde u^\eta_{t_0}(\bar Y)\right] + h^2. 
\end{split}
\ee
We analyze the  two middle terms in the right-hand side of \eqref{lerhzilrneskd1} separately. 
\vs
A standard computation gives 
\be\label{lerhzilrneskd2}
\begin{split}
& \E\left[ \int_{\R^d} (\tilde u_{t_0+h}(x) m_{t_0+h}(x)- \tilde u_{t_0}(x) m_{t_0}(x))dx\right]\\
& = \E\Big[\int_{t_0}^{t_0+h} \int_{\R^d} (\tilde u_t(x)+\tilde H_{t_0,t}(D\tilde u_t(x),x)-D_p\tilde H_{t_0,t}(D\tilde u_t(x),x)\cdot D\tilde u_t(x)\\
& \qquad -\tilde F_{t_0,t}(x,\tilde m_t)) \ \tilde m_t(x)dx dt\Big]\\
& \leq  h \int_{\R^d} (\tilde u_{t_0}(x)+ H(D\tilde u_{t_0}(x),x)-D_p H(D\tilde u_{t_0}(x),x)\cdot D\tilde u_{t_0}(x)\\
&\qquad - F(x, \bar m_0)) \   \bar m_0(x)dx +o(h),  %\label{lerhzilrneskd2}
\end{split}
\ee
where the last inequality comes from the semiconcavity of $\tilde u$. 
We also have
\[
\begin{split}
\E\left[ \tilde u^\eta_{t_0+h}(\bar Y)-\tilde u^\eta_{t_0}(\bar Y)\right] 
= & \E\Big[\int_{t_0}^{t_0+h} \tilde u^\eta_t(\bar Y)+ \xi_\eta \ast\left( \tilde H_{t_0,t}(D\tilde u_t(\cdot),\cdot)\right)(\bar Y)\\
&-\xi_\eta\ast \tilde F_{t_0,t}(\cdot,\tilde m_t)(\bar Y)\ dt \Big]. 
\end{split}
\]
Using that $D\tilde u_t$ is bounded, $H$ is locally Lipschitz continuous and convex in the first variable, $F$ is globally Lipschitz continuous and $\xi$ has a compact support, we get 
\begin{align*}
&\E\left[ \tilde u^\eta_{t_0+h}(\bar Y)-\tilde u^\eta_{t_0}(\bar Y)\right] 
\geq  \E\left[\int_{t_0}^{t_0+h} \tilde u_t(\bar Y)+ \tilde H_{t_0,t}(D\tilde u^\eta_t(\bar Y),\bar Y)- \tilde F_{t_0,t}(\bar Y,\tilde m_t)\ dt \right]-C\eta h. 
\end{align*}
Since the map $Y\to -\E\left[ \tilde u^\eta_{t_0}(Y)\right] -\Phi(\bar X,Y)$ has a minimum at $\bar Y$, we know that  $$D_Y\Phi(\bar X,\bar Y)= -D\tilde u^\eta_{t_0}(\bar Y) \ a.s..$$
Recalling  that $D\tilde u$ is globally bounded and the change of notation at the beginning of this part, yields  \eqref{cond2TER+}. 
\vs 
Moreover the last inequality can be rewritten as 
\be\label{lerhzilrneskd3}
\begin{split}
&\E\Big[ \tilde u^\eta_{t_0+h}(\bar Y)-\tilde u^\eta_{t_0}(\bar Y)\Big] \\
&\geq  \E\Big[ \tilde u_{t_0}(\bar Y)+  H(- D_Y\Phi(\bar X,\bar Y),\bar Y)-F(\bar Y, \bar m_0)\ dt \Big] -C\eta h-o(h),
\end{split}
\ee
because, in view of the  the semiconcavity of $\tilde u$, $t\to D\tilde u^\eta_t(x)$ is continuous in $L^1_{loc}$ at $t_0$. 
\vs
Combining  \eqref{lerhzilrneskd1}, \eqref{lerhzilrneskd2} and \eqref{lerhzilrneskd3} completes the proof. 

\end{proof}

\begin{proof}[The proof of Lemma~\ref{lem.laesndfm2}.] 
Set 
$$
Z_t= (X^N_{t_0+h}+\sqrt{2\beta}(W_{t_0+h}-W_{t_0}), \bar Y+\sqrt{2\beta}(W_{t_0+h}-W_{t_0})).
$$  
The map  $t\to X^N_t$ is Lipschitz continuous in $L^2$ and solves \eqref{defXN}. Hence,  for any bounded stopping time $\tau\geq t_0$ we have 
\begin{align*}
\Phi(Z_\tau) & =\Phi(\bar X, \bar Y)+  \int_{t_0}^{\tau} (-\E\left[ D_p\tilde H^N_{t_0,t}(D\tilde u^N_t(X^N_t), X^N_t)\cdot D_X\Phi(Z_t)|W\right] \\
&\qquad + \beta \sum_{k=1}^d D^2_{(X,Y)}\Phi(Z_t)((e_k,e_k),(e_k,e_k)) \ )\ dt
+ \sqrt{2\beta} \int_{t_0}^{\tau} (D_X\Phi(Z_t)+D_Y\Phi(Z_t))dW_t. 
\end{align*}
It follows from a standard localization argument that 
\[
\begin{split}
& \E\left[ \Phi(Z_{t_0+h}) \right] =\Phi(\bar X, \bar Y)+   \int_{t_0}^{t_0+h} (-\E\left[ D_p\tilde H^N_{t_0,t}(D\tilde u^N_t(X^N_t), X^N_t)\cdot D_X\Phi(Z_t)\right] \\
&\hskip1in + \beta \Lambda (D^2_{(X,Y)}\Phi(Z_t)))dt \geq \\ 
&\hskip.5in  \Phi(\bar X, \bar Y)+ h \E\Bigl[  - D_p H (D\tilde u_{t_0}(\bar X),\bar X)\cdot D_X\Phi(\bar X,\bar Y)\Bigr]+\beta h \Lambda (D^2_{(X,Y)}\Phi(\bar X,\bar Y))+o(h).
\end{split}
\]
\end{proof}

\subsection*{Formulation for the gradient of the solution}\label{subsec.gradient}

We explain in more detail how to formulate all the results of this section in term of the derivative $D_xU$ of $U$. As pointed out several times already, this formulation is the natural one in our framework. Let us underline also that the knowledge of $D_xU$ is central in the applications since the vector field $-D_pH(D_xU(x,m),x)$ is the optimal feedback of the MFG problem. 
\vs
We begin noticing that the gradient $W=(W_1, \dots, W_d)=D_xU$ of a solution $U$ to \eqref{ME2} satisfies, at least formally and for each $i=1.\ldots,d$,  
\be\label{ME3}
\begin{split}
& W_i(x,m)-\beta \Delta W_i(x,m) + D_pH(W(x,m),x)\cdot D_xW_i(x,m)+ D_{x_i}H(W(x,m),x)\\[1.5mm] %\notag \\
& + \int_{\R^d} D_{m}W_i(x,m,y) \cdot D_pH(W(y,m),y)m(dy)\\[1.5mm] 
& -\beta \Bigl( \int_{\R^d} Tr(D^2_{ym}W_i(x,m,y))m(dy) +2\int_{\R^d} Tr(D^2_{xm} W_i(x,m,y))m(dy)\\[1.5mm]  
& +\int_{\R^{2d}} Tr(D^2_{mm}W_i(x,m,y,y'))m(dy)m(dy')\Bigr) = 
F_{x_i}(x,m).
\end{split}
\ee

Mimicking Definition \ref{def.wealsol} we introduce, for $(X,Y)\in L^\infty_{ac}\times L^2$,  
$$
\hat W(X,Y)= \E\left[\int_0^1 W((1-t)X+tY, \mathcal L(X))dt\right], \;
%$$
%and
%$$
\hat W^\ep(X,Y)= \hat W(X,Y)+ \ep  \left\|\frac{d\mathcal L(X)}{d\lambda}\right\|_{\infty}. 
$$
We define the notion of weak solution to \eqref{ME3}: 
\begin{defn}\label{def.wealsolBIS}
A  map $W:\R^d\times \mathcal P_2(\R^d)\to \R^d$ is a weak solution of the master equation \eqref{ME3}  if  (i)~$W$ is globally bounded, $m\to W(\cdot,m)$ is continuous in $L^1_{loc}(\R^d)$, $x\to W(x,m)$ is irrotational for any $m$ and satisfies, for $x,y\in \R^d$ and  $m\in \Pw$ 
$$
(W(x,m)-W(y,m))\cdot (x-y)\leq C|x-y|^2, %\qquad \forall x,y\in \R^d, \ m\in \Pw
$$
and (ii)~there exists a constant $C>0$ such that, for all $( X, Y)\in L^\infty_{ac}\times L^2$, any $\ep\in (0,1)$ and all 
$(\mathcal X, (p_X,p_Y))\in  \bar D^{2,-}\hat W^\ep(X,Y)$, 
\begin{align*}
0 \leq &\;  \hat W( X,  Y) -\beta \Lambda \left( \mathcal X \right)
 - \E\left[F( X, \mathcal L( X))-F( Y, \mathcal L( X))\right] \notag\\
& +\E\left[ H(  W( X, \mathcal L( X)),X)-H(-p_Y,Y)\right]\\
& - \E \left[ (W( X, \mathcal L( X))-p_X)\cdot D_pH( W( X, \mathcal L( X)),X)\right] +C\ep\left(1+ \left\|\frac{d\mathcal L(X)}{d\lambda}\right\|_{L^\infty(\R^d)}\right). \notag
%&  +C\ep\left(1+ \left\|\frac{d\mathcal L(X)}{d\lambda}\right\|_{L^\infty(\R^d)}\right). \notag
\end{align*}
\end{defn}

With this definition in mind, Proposition \ref{takis3.1} can be restated as follows. 
\begin{prop}\label{takis3.1BIS} Assume that $H:\R^d\times \R^d\to \R$ is of class $C^1$ and $F:\R^d\times \mathcal P_1(\R^d)\to \R$ is continuous and class $C^1$ in the space variable. If $W:\R^d\times \Pw\to \R^d$ is a weak solution of \eqref{ME3} and  $W$, $D_xW, D_mW, D_{mm}W$ and $D_{xm}W$
are continuous in $x$ and $m$, then $W$ is a classical solution to  \eqref{ME3}. 
\end{prop}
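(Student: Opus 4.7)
The plan is to reduce Proposition~\ref{takis3.1BIS} to Proposition~\ref{takis3.1} by passing to a potential. First I would use the fact that by Definition~\ref{def.wealsolBIS} the field $W(\cdot,m)$ is irrotational in $x$ for each $m\in\Pw$, together with the assumed continuity of $W,D_xW,D_mW,D_{xm}W,D_{mm}W$ in both arguments, to construct a potential $U:\R^d\times\Pw\to\R$ with $D_xU=W$, normalized e.g.\ by $U(0,m)=0$. Integrating the gradient along line segments transfers the regularity from $W$ to $U$, so the maps $U,D_xU,D_{xx}U,D_mU,D_{xm}U,D_{mm}U$ are all continuous in $x$ and $m$.

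Second, the key identification is the fundamental-theorem-of-calculus identity
\[
U(X,\mathcal L(X))-U(Y,\mathcal L(X))=\int_0^1 W\bigl((1-t)X+tY,\mathcal L(X)\bigr)\cdot(X-Y)\,dt,
\]
which, reading the expression $\hat W(X,Y)$ of Definition~\ref{def.wealsolBIS} as the natural scalar pairing with $X-Y$, gives $\hat U^\ep(X,Y)=\hat W^\ep(X,Y)$ on $L^\infty_{ac}\times L^2$. Since the two functionals differ from their smooth parts by exactly the same penalization $\ep\|d\mathcal L(X)/d\lambda\|_\infty$, their second-order subjets $\bar D^{2,-}$ coincide, so every admissible test pair $(\mathcal X,(p_X,p_Y))$ for $\hat W^\ep$ is an admissible test pair for $\hat U^\ep$. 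Under this identification the weak-solution inequality for $W$ from Definition~\ref{def.wealsolBIS} is exactly the weak-solution inequality for $U$ from Definition~\ref{def.wealsol}. Hence $U$ is itself a weak solution of~\eqref{ME2} with the smoothness needed to apply Proposition~\ref{takis3.1}.

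Applying that proposition, $U$ is a classical solution of~\eqref{ME2} up to the addition of some continuous function $g(m)$ to the right-hand side. Differentiating this equation componentwise in $x_i$, using $W_i=D_{x_i}U$, the commutation of $D_{x_i}$ with $\Delta_x$ and with each of the operators $D_m,D_{xm},D_{mm}$ appearing in~\eqref{ME2}, and the fact that $D_{x_i}g(m)\equiv 0$, one obtains exactly~\eqref{ME3} for each $W_i$ with right-hand side $F_{x_i}(x,m)$. This gives the claim.

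The hardest step will be the subjet identification: one must verify rigorously that $\bar D^{2,-}\hat U^\ep$ and $\bar D^{2,-}\hat W^\ep$ really agree under $\hat U^\ep=\hat W^\ep$, which relies on checking that the smooth parts of both functionals are $C^2$ in the unpenalized variables — this follows from the stated continuity of $W$ and its derivatives, but the bookkeeping of first- and second-order Fréchet derivatives of the map $(X,Y)\mapsto\E[\int_0^1 W((1-t)X+tY,\mathcal L(X))\cdot(X-Y)\,dt]$, and in particular the matching of $D_mW$-contributions with $D_mU$-contributions, must be done carefully. Once this identification is in hand, no fresh PDE analysis is required: the result is a direct corollary of Proposition~\ref{takis3.1} combined with elementary differentiation in $x$.
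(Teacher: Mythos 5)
Your argument is correct and reaches the same conclusion, but it takes a somewhat different route from the paper.  The paper's own proof is a one-liner: it asserts that one simply re-runs the proof of Proposition~\ref{takis3.1}, noticing that every quantity appearing there depends on $U$ only through $D_xU$ (the differences $U(X_0,m_0)-U(Y_0,m_0)$ being recoverable from $W$ by the fundamental theorem of calculus, $D_mU(X_0,\cdot,\cdot)-D_mU(Y_0,\cdot,\cdot)$ from $D_mW$, etc.), so the argument may be carried out entirely in terms of $W$.  You instead \emph{construct} a potential $U$ with $D_xU=W$, check that $\hat U^\ep=\hat W^\ep$ via the line-integral identity, invoke Proposition~\ref{takis3.1} as a black box to get that $U$ solves \eqref{ME2} up to an additive $g(m)$, and then differentiate in $x_i$.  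This ``lift then project'' route is more modular and arguably cleaner to present, whereas the paper's route never needs to introduce $U$ at all.

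Two small remarks.  First, the step you flag as the ``hardest'' — identifying $\bar D^{2,-}\hat U^\ep$ with $\bar D^{2,-}\hat W^\ep$ — is in fact immediate: once you have the pointwise identity $\hat U^\ep=\hat W^\ep$ on $L^\infty_{ac}\times L^2$ (which is exactly the fundamental theorem of calculus applied under the expectation, using irrotationality of $W(\cdot,m)$), the subjets coincide because they are defined from the function alone; there is no Fr\'echet-derivative bookkeeping left to do.  Second, to apply Proposition~\ref{takis3.1} verbatim you would need the constructed $U$ to be a weak solution in the sense of Definition~\ref{def.wealsol}, which formally requires $U$ to be \emph{bounded}; integrating a bounded field along rays only gives $U$ with at most linear growth.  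This is harmless because boundedness of $U$ is never used in the proof of Proposition~\ref{takis3.1}, but you should say so explicitly rather than silently invoking the proposition.  Alternatively, note that the quantity $\E[U(X,\mathcal L(X))-U(Y,\mathcal L(X))]$ which is all the proof of Proposition~\ref{takis3.1} actually touches is well defined and finite for $X,Y\in L^2$ even without global boundedness of $U$, which is precisely the observation the paper is relying on.
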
 

The proof is the same as the one of Proposition \ref{takis3.1}.  Simply  notice that all the expressions involving $U$ in the proof actually only involve $D_xU$. In the same way, we have the following reformulation of the uniqueness of the weak solution.

\begin{thm}\label{thm.uniqueBIS}  Assume that $H:\R^d\times \R^d\to \R$ is locally Lipschitz continuous and $F:\R^d\times \mathcal P_1(\R^d)\to \R$ is Lipschitz continuous and monotone. Then there exists at most one solution of  the master equation \eqref{ME3}. 
\end{thm}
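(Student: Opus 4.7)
The plan is to mirror the proof of Theorem~\ref{thm.unique} with $W_1,W_2$ in place of $U_1,U_2$, exploiting that Definition~\ref{def.wealsolBIS} characterizes $W$ directly, so that no additive $m$-dependent constant survives and the argument should produce the outright equality $W_1=W_2$ without a concluding Lemma~\ref{LionsLemma} step.

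First I would introduce the two scalar functionals on $L^2\times L^2$
\begin{align*}
\hat W_1(X,Y) &= \E\Big[\int_0^1 W_1((1-t)X+tY,\mathcal L(X))\cdot(X-Y)\,dt\Big], \\
\hat W_2(X,Y) &= \E\Big[\int_0^1 W_2((1-t)Y+tX,\mathcal L(Y))\cdot(Y-X)\,dt\Big],
\end{align*}
which, thanks to the irrotationality of $x\mapsto W_i(x,m)$ built into Definition~\ref{def.wealsolBIS}(i), coincide with the $\hat U_1,\hat U_2$ appearing in the proof of Theorem~\ref{thm.unique} applied to any local scalar potentials of $W_1,W_2$. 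The target is
\[
M:=\inf_{(X,Y)\in L^2\times L^2}\bigl(\hat W_1(X,Y)+\hat W_2(X,Y)\bigr)\geq 0.
\]
I would then transcribe step for step the penalization scheme of Theorem~\ref{thm.unique}: minimize $\hat W_1^\ep(X,Y)+\hat W_2^\ep(X',Y')+\alpha(\|X\|_2^2+\|Y'\|_2^2)+\tfrac{1}{2\alpha}(\|X-X'\|_2^2+\|Y-Y'\|_2^2)$, apply Stegall's lemma to obtain a true minimizer at $(X_\delta,Y_\delta,X_\delta',Y_\delta')$ after perturbation by slopes of norm $\leq\delta$, and invoke Lemma~4 of~\cite{Lions89} to produce finite-rank symmetric operators $\mathcal X_N,\mathcal Y_N$ such that $\mathcal X_N+\tfrac{1}{\alpha}Q_N\in\bar D^{2,-}\hat W_1^\ep$, $\mathcal Y_N+\tfrac{1}{\alpha}Q_N\in\bar D^{2,-}\hat W_2^\ep$ and satisfying the matrix inequality~\eqref{matrixineq}. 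Feeding these subjets into Definition~\ref{def.wealsolBIS} for $W_1$ and $W_2$ and adding the two inequalities, I would exploit, exactly as in Theorem~\ref{thm.unique}, that $\Lambda(Q_N)=0$ and $\Lambda\circ\Sigma=\Lambda$ to dispose of the second-order contributions via~\eqref{matrixineq}, the monotonicity of $F$ to absorb the $F$-coupling, and convexity of $H$ in its first variable to control the Hamiltonian cross-terms. Sending $\delta,\ep,\alpha\to 0$ would then yield $M\geq 0$.

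To upgrade $M\geq 0$ into $W_1=W_2$, note that $M\geq 0$ rewrites, via the marginals $m=\mathcal L(X)$, $m'=\mathcal L(Y)$, as
\[
\int_{\R^d}\bigl(\Phi_1(x,m)-\Phi_2(x,m')\bigr)(m-m')(dx)\geq 0
\]
for any local scalar potentials $\Phi_i$ of $W_i$; the proof of Lemma~\ref{LionsLemma}, which uses the $\Phi_i$ only through their joint continuity and their Lipschitz regularity in $x$ (both inherited from Definition~\ref{def.wealsolBIS}(i)), then delivers $D_x\Phi_1=D_x\Phi_2$, that is, $W_1=W_2$. The main obstacle is essentially bookkeeping: $\hat W_i$ is scalar because one dots with $X-Y$ inside the line integral, and one must check that its second-order Fr\'echet derivative at the approximate minimizer reproduces precisely the bilinear form consumed by Definition~\ref{def.wealsolBIS} -- in particular that the $Q_N$-compression step of~\cite{Lions89} remains compatible with the merely lower semicontinuous density penalization $\|d\mathcal L(\cdot)/d\lambda\|_\infty$. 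A secondary point to flag is that the statement of Theorem~\ref{thm.uniqueBIS} as given drops the convexity of $H$ in its first argument, which the above argument requires and which should be understood as still in force from~\eqref{takis1002}.
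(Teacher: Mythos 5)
Your proposal is correct and matches the paper's approach; the paper itself gives no detailed proof of Theorem~\ref{thm.uniqueBIS} and simply remarks, after Proposition~\ref{takis3.1BIS}, that the argument for Theorem~\ref{thm.unique} carries over because every expression involving $U$ in that proof in fact depends only on $D_xU$. Your transcription is what that remark unpacks to: you correctly read the paper's $\hat W(X,Y)= \E\left[\int_0^1 W((1-t)X+tY, \mathcal L(X))\,dt\right]$ as a line integral, i.e., insert the missing $\cdot(X-Y)$ so that $\hat W$ is scalar and equals $\hat U$ built from any local potential, and your choice of $\hat W_2$ with $\mathcal L(Y)$ reproduces the asymmetric doubling $\hat U_2$ of Theorem~\ref{thm.unique}. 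You are also right that convexity of $H$ in $p$ (from \eqref{takis1002}) is tacitly still in force even though the theorem statement only asserts local Lipschitz continuity.

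One internal inconsistency to fix in the write-up: your opening sentence says the argument should avoid a concluding Lemma~\ref{LionsLemma} step, but your final paragraph invokes precisely that lemma applied to local potentials $\Phi_i$ of $W_i$. The correct reading is that Lemma~\ref{LionsLemma} (or its proof, run on the potentials) is still the mechanism that converts $M\geq 0$ into $D_x\Phi_1=D_x\Phi_2$; what changes is the interpretation of the conclusion, since adding an $m$-dependent constant to $\Phi_i$ does not change $W_i=D_x\Phi_i$, so the uniqueness of $W$ is outright rather than modulo constants. For Lemma~\ref{LionsLemma} to apply you do need the global potential $\Phi_i$ on $\R^d\times\Pw$ to be jointly continuous and Lipschitz in $x$, which is exactly what the hypotheses in Definition~\ref{def.wealsolBIS}(i) (global boundedness of $W$, continuity of $m\mapsto W(\cdot,m)$ in $L^1_{loc}$, irrotationality on the simply connected $\R^d$) supply; it would be worth one sentence saying so explicitly.
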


The existence of a weak solution for \eqref{ME3} is a straightforward application of Theorem \ref{thm.existence}. 

\section{Relation between the two definitions for the first-order master equation} 

We revisit  the first-order master equation \eqref{ME1} and show directly that, in this case, the definition in the Hilbert space (Definition \ref{def.wealsol}) is equivalent  to the one  on the space of measures (Definition \ref{d1}). One direction is rather straightforward while the opposite is more complicated.

\begin{thm} \label{thm.equivalence} A map $U$ is a weak solution of \eqref{ME2} with $\beta =0$ in the sense of Definition \ref{def.wealsol} if and only if $U$ is a weak solution of \eqref{ME1} in the sense of Definition \ref{d1}. 
\end{thm}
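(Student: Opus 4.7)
My plan is to prove both implications by transferring minimum conditions between the Hilbert and measure frameworks via independent couplings. The underlying dictionary is: for $X, Y \in L^2$ independent with $\mathcal{L}(X) = m \in \Pw\cap L^\infty$ and $\mathcal{L}(Y) = \tilde m$,
\begin{equation*}
\hat U(X,Y) = \int_{\R^d} U(x,m)(m - \tilde m)(dx),\qquad  \left\|\frac{d\mathcal L(X)}{d\lambda}\right\|_\infty = \|m\|_\infty,
\end{equation*}
and analogous identities for $F$ and for $\int H(D_xU(\cdot, m), \cdot)\,m(dx)$. When $\beta = 0$ the operator $\Lambda(\mathcal X)$ disappears from \eqref{cond2BIS}, and the resulting first-order Hilbert inequality matches \eqref{Cond1} term by term under the identification $p_X = D\phi(X)$ and $-p_Y = D_xU(Y,m)$.

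For Definition \ref{def.wealsol} $\Rightarrow$ Definition \ref{d1}, starting from data $(\tilde m, \hat m, \phi, \ep, m_0)$ with the measure functional minimized at $m_0$, I would lift to independent $X_0 \in L^\infty_{ac}$ and $Y_0 \in L^2$ with the prescribed laws, and consider for large $\delta > 0$
\begin{equation*}
\mathcal G_\delta(X,Y) = \hat U^\ep(X,Y) - \E[\phi(X)-\phi(Y)] + \ep\,\dw(\mathcal L(X),\hat m) + \delta\|Y - Y_0\|_2^2,
\end{equation*}
which equals the measure functional at $Y = Y_0$ and is coercive in $Y$. Although $(X_0, Y_0)$ need not be a critical point, $\mathcal G_\delta$ attains its infimum at some $(X_\delta, Y_\delta)$ with $\|Y_\delta - Y_0\|_2 \to 0$ as $\delta \to \infty$. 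I would then apply Stegall's variational principle to perturb by a linear term of norm $\le\eta$ to make the minimum strict at a nearby $(X_{\delta,\eta}, Y_{\delta,\eta})$, and extract from the smooth test function an element of $\bar D^{2,-}\hat U^\ep$. Invoking Definition \ref{def.wealsol} (so $\Lambda(\mathcal X)$ drops out), and letting $\eta \to 0$ then $\delta \to \infty$, forces $p_X \to D\phi(X_0)$, $-p_Y$ into the superdifferential of $U(\cdot, m_0)$ at $Y_0$, and absorbs the $\ep\,\dw$ penalty into the $C\ep(1+\|m_0\|_\infty)$ error; rearranging yields \eqref{Cond1}.

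For Definition \ref{d1} $\Rightarrow$ Definition \ref{def.wealsol}, given $(X,Y) \in L^\infty_{ac}\times L^2$, $\ep \in (0,1)$, and $(\mathcal X, (p_X,p_Y)) \in \bar D^{2,-}\hat U^\ep(X,Y)$, I would first use the closure and linearization to reduce to an affine $C^2$ test function; since $\beta = 0$, $\mathcal X$ is irrelevant. Set $m_0 = \mathcal L(X)$, $\tilde m = \mathcal L(Y)$. Because $\hat U^\ep$ depends only on laws, the $X$-projection of the Hilbert minimum is a Wasserstein-tangent minimum at $m_0$, so $p_X$ lies in the $L^2_{m_0}$-closure of gradient fields evaluated at $X$; approximate $p_X$ by $D\phi_n(X)$ with $\phi_n \in W^{1,\infty}$. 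The measure functional from Definition \ref{d1} built with $(\phi_n, \tilde m, \hat m, \ep)$ inherits a local minimum at $m_0$ from the $X$-variation of the Hilbert minimum, so \eqref{Cond1} applies. The $Y$-projection of the Hilbert minimum identifies $-p_Y$ as a superdifferential element of the semiconcave function $U(\cdot, m_0)$ at $Y$, and convexity of $H(\cdot, y)$ gives
\begin{equation*}
H(-p_Y, y) \ge H(D_xU(y, m_0), y) + D_pH(D_xU(y, m_0), y)\cdot (-p_Y - D_xU(y, m_0)),
\end{equation*}
which allows substituting $H(-p_Y, y)$ for $H(D_xU(y, m_0), y)$ in \eqref{Cond1} after integrating against $\tilde m$. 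Passing to the limit $n \to \infty$ produces \eqref{cond2BIS} with $\beta = 0$.

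The main obstacle is the reverse direction, specifically three approximation steps: (a) realizing a general Hilbert subdifferential component $p_X \in L^2$ as a limit of gradient fields $D\phi_n(X)$ with $\phi_n \in W^{1,\infty}$, which relies on the Wasserstein-tangent-space structure at absolutely continuous measures; (b) reconciling the multivalued superdifferential of the semiconcave function $U(\cdot, m_0)$ at $Y$ with the single value $D_xU(Y, m_0)$ that appears in \eqref{Cond1}, where convexity of $H$ in its first argument is essential to close the gap; and (c) matching the Wasserstein penalty $\dw$ in Definition \ref{d1} with the $L^2$-level machinery of Definition \ref{def.wealsol}, which requires Stegall- or Ekeland-type perturbations to regularize the nonsmooth terms. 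The forward direction is technically nontrivial but follows more standard lines once the Stegall perturbation is in place.
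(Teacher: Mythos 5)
Your overall strategy matches the paper's: both directions are proved by lifting and projecting minimum conditions between $L^2$ and $\Pw$, using Stegall perturbations and, for the reverse direction, the Wasserstein tangent space at $m_0$. However the reverse direction as outlined has genuine gaps that are precisely where the paper's work lives.

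First, in Definition~\ref{def.wealsol} the random variable $Y$ is an arbitrary element of $L^2$, so $\tilde m=\mathcal L(Y)$ need not be absolutely continuous with bounded density, whereas Definition~\ref{d1} requires $\tilde m\in\Pw\cap L^\infty$. You must first perturb $Y$ into $L^\infty_{ac}$; the paper does this by adding a term $\delta\,\bigl\|\,d\mathcal L(Y)/d\lambda\,\bigr\|_\infty$ and a Stegall perturbation, obtaining $(\bar X_\delta,\bar Y_\delta)\in L^\infty_{ac}\times L^\infty_{ac}$ with $\delta\to0$, and then uses the $L^\infty$-weak-$\ast$ limit of $D_xU(\bar Y_\delta,\mathcal L(\bar X_\delta))$ together with semiconcavity of $U$ and $\sigma(\bar Y)$-measurability of $D_Y\Phi(\bar X,\bar Y)$ (Lemma~\ref{lem.barYmeas}) to identify that limit with $-D_Y\Phi(\bar X,\bar Y)$. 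Your step (b) implicitly needs this.

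Second, your step (a) is stated too loosely: $p_X=D_X\Phi(\bar X_\delta,\bar Y_\delta)$ is an $L^2$ random variable, and it need not be $\sigma(\bar X_\delta)$-measurable at all, so asserting that ``$p_X$ lies in the $L^2_{m_0}$-closure of gradient fields'' is not the right statement. What is true, and what Lemma~\ref{takis513} actually proves, is that the conditional expectation $h(\bar X_\delta)=\E\bigl[p_X\,|\,\bar X_\delta\bigr]$ belongs to $\mathcal T^2_{m_0}\Pw=\overline{\{D\phi:\phi\in C^\infty_c\}}^{L^2_{m_0}}$. Showing this is nontrivial: the paper argues that $h$ is orthogonal to every $b\in L^2_{m_0}$ with $\operatorname{div}(bm_0)=0$, using the quadratic lower bound from the minimum, Brenier maps $\xi_{x,t}$ conditioned on the starting point, and Ambrosio's superposition principle applied to the constant-in-time solution $m_0$ of $\partial_t m+\operatorname{div}(mb)=0$. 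Furthermore, your assertion that ``the measure functional inherits a local minimum at $m_0$'' also requires a separate argument (Lemma~\ref{takis514}): replacing $h$ by $D\phi_n$ introduces an error $\|D\phi_n-h\|_{L^2_{m_0}}\dw(m,m_0)$, and replacing $\int D\phi_n\cdot(\xi-\mathrm{id})\,dm_0$ by $\int\phi_n\,d(m-m_0)$ introduces an error $\|D^2\phi_n\|_\infty\dw^2(m,m_0)$, so one must choose a shrinking radius $r_n$ in which the minimum persists and wait until $2\delta_n\le\ep$. Finally, in the forward direction, the Wasserstein penalty $\ep\,\dw(\mathcal L(X),\hat m)$ is not a $C^2$ function of $X$ and cannot be used directly inside a subjet; the paper replaces it by $\ep\bigl(\alpha+\|X-\hat Y\|_2^2\bigr)^{1/2}$ for a fixed representative $\hat Y$ of $\hat m$, which is $C^2$ for $\alpha>0$, and then lets $\alpha\to0$.
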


We split the proof in two propositions, each one stating one implication in the equivalence claimed by the theorem. 

\begin{prop}\label{prop1} Let $U$ be a weak solution of \eqref{ME2} with $\beta =0$ in the sense of Definition \ref{def.wealsol}. Then $U$ is a weak solution of \eqref{ME1} in the sense of Definition \ref{d1}. 
\end{prop}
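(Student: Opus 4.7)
The plan is to lift the scalar minimum condition of Definition \ref{d1} to the Hilbert space $L^2\times L^2$, manufacture a subjet element of $\hat U^\epsilon$ near the target point via a Stegall-type perturbation, apply \eqref{cond2BIS} at that subjet (with $\beta = 0$), and translate the resulting Hilbert-space inequality back to the measure-space inequality \eqref{Cond1}. Given data $(\tilde m, \hat m, \phi, \epsilon, m_0)$ as in Definition \ref{d1}, I would first pick $X_0 \in L^\infty_{ac}$, $Y_0, \hat X_0 \in L^2$ with laws $m_0, \tilde m, \hat m$, choosing $(X_0, \hat X_0)$ so that $\|X_0 - \hat X_0\|_2 = \dw(m_0, \hat m)$, and reduce to $\phi \in C^2_c$ by mollification. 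For $\alpha > 0$ to be sent to zero I introduce the smooth test function
\[
\psi_\alpha(X,Y) = \E[\phi(X)-\phi(Y)] - \epsilon \sqrt{\|X-\hat X_0\|_2^2 + \alpha^2} - \tfrac{1}{2\alpha}\|Y - Y_0\|_2^2
\]
and the functional $G_\alpha := \hat U^\epsilon - \psi_\alpha$ on $L^2 \times L^2$. Since $\sqrt{\|X-\hat X_0\|_2^2+\alpha^2} \geq \|X-\hat X_0\|_2 \geq \dw(\mathcal L(X),\hat m)$ with approximate equality at $X_0$, a direct rewriting shows that $G_\alpha(X,Y_0)$ dominates the measure-space functional of Definition \ref{d1} up to an $O(\alpha)$ error, with equality at $(X_0,Y_0)$. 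Combined with the quadratic coercivity in $Y$ and the local minimum at $m_0$, this makes $(X_0,Y_0)$ an approximate local minimum of $G_\alpha$.

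Applying Stegall's variational principle as in the proof of Theorem \ref{thm.unique} then yields, for every $\delta > 0$, perturbations $(p_X^\delta,p_Y^\delta) \in L^2\times L^2$ of total norm at most $\delta$ and a strict local minimizer $(X_\delta,Y_\delta) \in L^\infty_{ac}\times L^2$ of $G_\alpha - \E[p_X^\delta \cdot X + p_Y^\delta \cdot Y]$, with $(X_\delta,Y_\delta) \to (X_0,Y_0)$ in $L^2\times L^2$ as $\delta \to 0$. At this point $(D^2\psi_\alpha(X_\delta,Y_\delta), (p_X,p_Y)) \in \bar D^{2,-}\hat U^\epsilon(X_\delta,Y_\delta)$ with
\[
p_X = D\phi(X_\delta) - \tfrac{\epsilon(X_\delta-\hat X_0)}{\sqrt{\|X_\delta - \hat X_0\|_2^2+\alpha^2}} + p_X^\delta, \qquad p_Y = -D\phi(Y_\delta) - \tfrac{1}{\alpha}(Y_\delta-Y_0) + p_Y^\delta,
\]
and inequality \eqref{cond2BIS} applies. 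The first-order optimality in $Y$, together with the semiconvexity of $Y\mapsto -\E[U(Y,\mathcal L(X_\delta))]$ inherited from the semiconcavity of $U$ in $x$, forces $-p_Y = D_xU(Y_\delta,\mathcal L(X_\delta))$ at differentiability points, so that $\|Y_\delta - Y_0\|_2 = O(\alpha)$. Sending $\delta \to 0$ and then $\alpha \to 0$, the uniform semiconcavity and continuity of $U$ give $-p_Y \to D_xU(Y_0,m_0)$ in $L^2$, hence $\E[H(-p_Y, Y_\delta)] \to \int H(D_xU(y,m_0),y)\tilde m(dy)$. Meanwhile $p_X = D\phi(X_\delta) + O(\epsilon) + p_X^\delta$ in $L^2$-norm since the $\epsilon$-correction has norm at most $\epsilon$, producing an additional $O(\epsilon)$ contribution in the $D_pH$-term that is absorbed into the $C\epsilon(1+\|m_0\|_\infty)$ term on the right-hand side of \eqref{Cond1}. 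Rewriting the resulting expectations as integrals against $m_0$ and $\tilde m$ then recovers \eqref{Cond1} exactly.

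The main obstacle is the identification of the $Y$-component $p_Y$ of the subjet with $-D_xU(Y_0,m_0)$ in the limit: this relies crucially on both the quadratic $Y$-penalty $\tfrac{1}{2\alpha}\|Y-Y_0\|_2^2$, which turns the constrained minimum into a genuine critical point of a smooth test function and thereby yields an explicit formula for $p_Y$, and the uniform semiconcavity of $U$ in the first variable, which pins down the $Y$-superdifferential of the semi-convex map $Y\mapsto -\E[U(Y,m_\delta)]$ at its tight value. Secondary technical points are the smoothing of the non-differentiable norm $\|X-\hat X_0\|_2$ via $\sqrt{\|X-\hat X_0\|_2^2+\alpha^2}$, whose $O(\alpha)$ deviation is harmless, and the uniform bounds on $D_pH$ from \eqref{takis1002} together with the Lipschitz/semiconcavity properties of $U$, which justify passing to the limit in all remaining terms by dominated convergence.
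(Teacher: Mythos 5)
Your proposal follows the same strategy as the paper: fix an optimal coupling $(X_0,\hat X_0)$ for $\dw(m_0,\hat m)$, lift $\phi$, the Wasserstein term, and a quadratic $Y$-penalty to a smooth test function $\psi_\alpha$ on $L^2\times L^2$, produce a subjet of $\hat U^\ep$ via Stegall's principle, apply \eqref{cond2BIS} with $\beta=0$, and pass to the limit; the identification of $-p_Y$ with $D_xU(Y_0,m_0)$ via the $\tfrac1{2\alpha}\|Y-Y_0\|_2^2$ penalty and the semiconcavity of $U$ is also as in the paper. There is, however, a genuine gap in the Stegall step. You assert that the strong minimizer $(X_\delta,Y_\delta)$ of $G_\alpha-\E[p^\delta_X\cdot X+p^\delta_Y\cdot Y]$ converges to $(X_0,Y_0)$ as $\delta\to0$ with $\alpha$ fixed. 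But $(X_0,Y_0)$ is only an approximate (and certainly non-strict) local minimizer of $G_\alpha$: the minimum of $m\mapsto\int(U-\phi)(m-\tilde m)+\ep(\dw(m,\hat m)+\|m\|_\infty)$ is non-strict because one can change $X$ along its law fiber without changing $\mathcal L(X)$, and the $\ep\sqrt{\|X-\hat X_0\|_2^2+\alpha^2}$ term gives no coercivity around $X_0$. Stegall's principle produces a strong minimizer for a generic small perturbation but gives no control on its location, so $\mathcal L(X_\delta)$ need not converge to $m_0$ and the resulting inequality would not be at $m_0$. The paper resolves this by inserting an extra term $\delta'\E[|X-X_0|^2]$ (with $\delta'>0$ fixed) into the test function, making $X_0$ the \emph{unique} minimizer of the unperturbed map in $X$, so that the Stegall minimizer does converge to $X_0$; the contribution $2\delta'(X_\alpha-X_0)$ to $p_X$ then vanishes in the limit. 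Your proof needs the same ingredient.

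A secondary inaccuracy: you claim $-p_Y\to D_xU(Y_0,m_0)$ \emph{strongly} in $L^2$ and that $\E[H(-p_Y,Y_\delta)]\to\int H(D_xU(y,m_0),y)\,\tilde m(dy)$. The $p_Y$'s are merely bounded in $L^\infty$, so one can only extract a weak-$L^2$ limit $p_{Y_0}$; uniform semiconcavity identifies $-p_{Y_0}\in D_x^+U(Y_0,m_0)$ a.s., and $-p_{Y_0}=D_xU(Y_0,m_0)$ a.e.\ because $\tilde m$ is absolutely continuous. One must then invoke the convexity of $H$ in its first argument to obtain $\liminf\E[H(-p_Y,Y_\delta)]\geq\int H(D_xU(y,m_0),y)\,\tilde m(dy)$; this is the direction actually needed for \eqref{Cond1}, so the strong convergence claim is both unjustified and unnecessary.
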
 

\begin{proof} Let  $\phi$ be a Lipschitz continuous map, $\tilde m\in  \Pw\cap L^\infty(\R^d) $, $\hat m\in \Pw$ and assume that $m_0\in  \Pw\cap L^\infty(\R^d) $ minimizes 
$$
m \to \int_{\R^d} (U(x,m)-\phi(x))(m(x)-\tilde m(x))dx +\ep(\dw(m,\hat m)+ \|m\|_\infty).
$$
Fix $X_0\in L^2$ and $\tilde Y,\hat Y\in L^2$ be such that $\mathcal L(X_0)=m_0$, $\mathcal L(\tilde Y)=\tilde m$ and $\mathcal L(\hat Y)=\hat m$. Then, for any $\delta >0$, the map
\begin{align*}
X\to&  \E\left[ U(X,\mathcal L(X))-U(\tilde Y,\mathcal L(X))-\phi(X)+\phi(\tilde Y)+\delta |X-X_0|^2\right]+\ep\| X-\hat Y\|_2 \\
& +\ep\left\|\frac{d\mathcal L(X)}{d\lambda}\right\|_{\infty}
\end{align*}
has a unique minimum at $X_0$. 
\vs
Fix $\alpha>0$ and let $\phi_\alpha$ be a standard regularization of $\phi$, such that $D\phi_\alpha$ is uniformly bounded and converges a.e. to $D\phi$. It follows from  Stegall's Lemma  that there exist  $p_X$, $p_Y$ with $\|p_X\|_{2}+\|p_Y\|_{2}<\alpha$ and such that the map 
\begin{align*}
(X,Y) \to & \E\Bigl[ U(X,\mathcal L(X))-U( Y,\mathcal L(X))-\phi_\alpha(X)+\phi_\alpha( Y)+\delta |X-X_0|^2+\frac{1}{2\alpha} |Y-\tilde Y|^2 \\
& \qquad  - p_X \cdot X-p_Y\cdot Y\Bigr] +\ep(\alpha+\E\left[|X-\hat Y|^2\right])^{1/2} +\ep\left\|\frac{d\mathcal L(X)}{d\lambda}\right\|_{\infty}
\end{align*}
 has a minimum at $(X_\alpha,Y_\alpha)$, and  $Y_\alpha\to \tilde Y$ and $X_\alpha\to X_0$ as $\alpha\to 0$. 
 \vs
 
 It follows from  Definition \ref{def.wealsol} that 
\begin{align}\label{oauzlqensmrdc}
0 \leq &\;  \hat U( X_\alpha,  Y_\alpha) 
 - \E\left[F( X_\alpha, \mathcal L( X_\alpha))-F( Y_\alpha, \mathcal L( X_\alpha))\right] \notag\\[1.2mm]
& +\E\left[ H(  D_xU( X_\alpha, \mathcal L( X_\alpha)),X_\alpha)-H(-p_{Y_\alpha},Y_\alpha)\right]\\
& - \E \left[ (D_xU( X_\alpha, \mathcal L( X_\alpha))-p_{X_\alpha})\cdot D_pH( D_xU( X_\alpha, \mathcal L( X_\alpha)),X_\alpha)\right] +C\ep\left(1+ \left\|\frac{d\mathcal L(X_\alpha)}{d\lambda}\right\|_{\infty}\right),\notag
%&  +C\ep\left(1+ \left\|\frac{d\mathcal L(X_\alpha)}{d\lambda}\right\|_{\infty}\right) \notag
\end{align}
where 
\be\label{deflpah}
p_{X_\alpha}= D\phi(X_\alpha) +2\delta (X_\alpha-X_0)-\ep(\alpha+\E\left[|X_\alpha-\hat Y|^2\right])^{-1/2}(X_\alpha-\hat Y) +p_X
\ee
and
$$
p_{Y_\alpha} =- \frac{Y_\alpha-\tilde Y}{\alpha} -D\phi(Y_\alpha)+p_Y.
$$
In view of the  optimality of $Y_\alpha$,  $-p_{Y_\alpha}\in D^+_xU(Y_\alpha,\mathcal L(X_\alpha))$ a.s. and, therefore, $p_{Y_\alpha}$ is bounded in $L^\infty$ since $U$ is uniformly Lipschitz continuous in the first variable. It follows that, up to a subsequence denoted in the same way as the full family, the $p_{Y_\alpha}$'s converge,  as $\alpha\to 0$, weakly in $L^2$ to some $p_{\tilde Y}$.  Since $U$ is uniformly semiconcave in space, it follows that $-p_{\tilde Y}\in D^+_xU(\tilde Y,\mathcal L(\tilde X))$. %since $U$ is uniformly semiconcave in space. 
Finally, given that  $\tilde Y$ has an absolutely continuous density, we infer that  $-p_{\tilde Y}= D_xU(\tilde Y,\mathcal L(\tilde X))$ a.s.. 
\vs

Using the 
convexity of $H$, we get 
$$
\E\left[H(D_xU(\tilde Y,\mathcal L(\tilde X)),\tilde Y)\right]\leq  \liminf_{\alpha \to 0} \E\left[ H(-p_{Y_\alpha},Y_\alpha)\right].
$$
The other terms in \eqref{oauzlqensmrdc} easily pass to the limit. Indeed,  recalling that the density of the law of $X_\alpha$ converges to $m_0$ in $L^\infty-$weak-$\ast$  and noticing that the term in $\ep$ in \eqref{deflpah} is uniformly bounded by $\ep$,  as expected we obtain
\begin{align*}
0 \leq &\;  \hat U( X_0,  \tilde Y) 
 - \E\left[F( X_0, \mathcal L( X_0))-F( \tilde Y, \mathcal L( X_0))\right] \notag\\
& +\E\left[ H(  D_xU( X_0, \mathcal L( X_0)),X_0)-H(D_xU(\tilde Y,\mathcal L(\tilde X),\tilde Y)\right]\\
& - \E \left[ (D_xU( X_0, \mathcal L( X_0))- D\phi(X_0)\cdot D_pH( D_xU( X_0, \mathcal L( X_0)),X_0)\right]\notag\\
&  +C\ep\left(1+ \left\|\frac{d\mathcal L(X_0)}{d\lambda}\right\|_{\infty}\right). \notag
\end{align*}
\end{proof}

We now consider the other direction. A already mentioned, the argument is much more intricate than the one for Proposition \ref{prop1}. The main difficulty is how to transform the subdifferential in the Hilbert space  in the definition into a test function in the space of measures. This question has been investigated in Gangbo and Tudorascu \cite{BaTu19}  in the setting of Hamilton-Jacobi equations (see also Alfonsi and Jourdain \cite{AlJo20} for a related topic) and we largely use these ideas although in a slightly different  context.

\begin{prop}
If $U$ is a weak solution of \eqref{ME1} in the sense of Definition \ref{d1}, then $U$ satisfies \eqref{ME2} with $\beta=0$ in the sense of Definition \ref{def.wealsol}. 
\end{prop}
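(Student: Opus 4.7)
The plan is to start from an arbitrary element of $\bar D^{2,-}\hat U^\ep(X_0,Y_0)$, produce from it a triple $(\phi,\tilde m,\hat m)$ fitting Definition \ref{d1}, apply the latter at $m_0=\mathcal L(X_0)$, and translate the resulting inequality back into the Hilbert-space form \eqref{cond2BIS}. Since $\beta=0$ the second-order datum $\mathcal X$ does not enter \eqref{cond2BIS}, and the standard stability of $\bar D^{2,-}$ under limits (combined with the continuity of $H$, $D_pH$, $F$ and the uniform semiconcavity of $U$ in space) reduces matters to the case where $\hat U^\ep-\psi$ has a strict local minimum at $(X_0,Y_0)\in L^\infty_{ac}\times L^2$ for some $\psi\in C^2(L^2\times L^2)$, with $p_X=D_X\psi(X_0,Y_0)$ and $p_Y=D_Y\psi(X_0,Y_0)$.

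I would then analyze the $Y$- and $X$-variations separately, exploiting the product structure $\hat U(X_0,Y)=\int U(x,m_0)m_0(dx)-\E[U(Y,m_0)]$. Fixing $X=X_0$, the map $Y\mapsto \E[U(Y,m_0)]+\psi(X_0,Y)$ attains a local maximum at $Y_0$; since $U(\cdot,m_0)$ is Lipschitz and semiconcave the Hilbert superdifferential coincides pointwise with the classical one, giving $-p_Y(\omega)\in D_x^+U(Y_0(\omega),m_0)$ a.s. A Stegall-type perturbation of $\psi$ in the $Y$-slot (following Proposition \ref{prop1}) allows one to assume $Y_0\in L^\infty_{ac}$, so that $\tilde m:=\mathcal L(Y_0)\in\Pw\cap L^\infty$ and $-p_Y=D_xU(Y_0,m_0)$ a.s. Fixing now $Y=Y_0$, any law-preserving perturbation $X=X_0+\eta v(X_0)$ with $\mathrm{div}(v\,m_0)=0$ satisfies $\hat U^\ep(X,Y_0)=\hat U^\ep(X_0,Y_0)+O(\eta^2)$, which forces $\E[p_X\cdot v(X_0)]=0$ for every such $v$; the Helmholtz decomposition in $L^2_{m_0}$ then yields $\bar p_X:=\E[p_X\mid X_0]=D\phi(X_0)$ $m_0$-a.s.\ for some Lipschitz potential $\phi$ on $\R^d$. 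Since $D_xU(X_0,m_0)$ and $D_pH(D_xU(X_0,m_0),X_0)$ are $\sigma(X_0)$-measurable, replacing $p_X$ by $\bar p_X$ does not alter the term $\E[(D_xU(X_0,m_0)-p_X)\cdot D_pH(D_xU(X_0,m_0),X_0)]$ in \eqref{cond2BIS}.

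The crucial step is then to verify that $m_0$ is a local minimum in $\Pw\cap L^\infty(\R^d)$ of
\[ m\mapsto \int (U(x,m)-\phi(x))(m-\tilde m)(dx)+\ep(\dw(m,\hat m)+\|m\|_\infty) \]
with the choice $\hat m:=m_0$ (so the $\dw$-penalty vanishes at $m_0$). Lifting a nearby measure $m$ to some $X\in L^\infty_{ac}$ with $\mathcal L(X)=m$, the Hilbert minimization inequality $\hat U^\ep(X,Y_0)-\psi(X,Y_0)\geq \hat U^\ep(X_0,Y_0)-\psi(X_0,Y_0)$, together with $\int\phi(m-\tilde m)(dx)=\E[\phi(X)-\phi(Y_0)]$ and the first-order expansion $\psi(X,Y_0)-\psi(X_0,Y_0)=\E[p_X\cdot(X-X_0)]+O(\|X-X_0\|_2^2)$, yields the desired minimization once the quadratic remainder and the discrepancy between $p_X$ and $\bar p_X=D\phi(X_0)$ are absorbed into the $\ep\,\dw(m,m_0)$ penalty; this is where the Hilbert-to-Wasserstein correspondence of Gangbo-Tudorascu \cite{BaTu19} is invoked. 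Once this is established, Definition \ref{d1} applies at $m_0$ with the triple $(\phi,\tilde m,\hat m)$, and rewriting \eqref{Cond1} in Hilbert form, using $X_0\sim m_0$ and $Y_0\sim\tilde m$ together with $D_xU(Y_0,m_0)=-p_Y$ and $D\phi(X_0)=\bar p_X$, recovers \eqref{cond2BIS} after a final $L^2$-approximation removing the temporary hypothesis $Y_0\in L^\infty_{ac}$.

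The main obstacle is this last construction: from the abstract Helmholtz potential, defined only $m_0$-a.e., one must produce a genuine globally Lipschitz $\phi$ on $\R^d$ together with a reference measure $\hat m$ making the measure-space minimality at $m_0$ legitimate, while ensuring that the mismatch between the Hilbert-space quadratic expansion of $\psi$ and the induced Wasserstein expansion is controlled by the $\ep(\dw+\|m\|_\infty)$-penalty. This is the same type of Hamilton-Jacobi-in-measures subtlety resolved in \cite{BaTu19}, and it is the only step of the proof that is not essentially automatic.
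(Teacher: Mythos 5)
Your overall strategy follows the paper's proof closely: Stegall perturbation to place $Y_0$ in $L^\infty_{ac}$, $Y$-variation to identify $-p_Y=D_xU(Y_0,m_0)$, tangent-space analysis of $p_X$ via conditional expectation (the observation that only $\bar p_X=\E[p_X\mid X_0]$ enters the $D_pH$ term is correct and appears as identity \eqref{qsndlrthmf} in the paper's Lemma~\ref{takis513}), and then transfer to a measure-space minimality testable by Definition~\ref{d1}. However, the proposal contains a genuine gap at precisely the step you flag as the ``main obstacle'', and your description of what is missing is off-target.

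The Helmholtz decomposition in $L^2_{m_0}$ yields only that $\bar p_X$ lies in $\mathcal T^2_{m_0}\Pw$, the $L^2_{m_0}$-closure of $\{D\phi:\phi\in C^\infty_c(\R^d)\}$; it does not produce a single Lipschitz $\phi$ with $\bar p_X=D\phi(X_0)$ $m_0$-a.s., and in general no such $\phi$ exists. The paper therefore works with an approximating sequence $\phi_n\in C^\infty_c(\R^d)$ with $D\phi_n\to h$ in $L^2_{m_0}$ (Lemma~\ref{takis513}), and then proves in Lemma~\ref{takis514} that for $n$ large the map $m\mapsto\int(U-\phi_n)(m-\tilde m)+\ep(\dw(m,m_0)+\|m\|_\infty)$ has a local minimum at $m_0$. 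The key point is that the radius $r_n$ of the ball on which local minimality holds shrinks with the approximation error $\|D\phi_n-h\|_{L^2_{m_0}}$, so that both this error and the quadratic remainder from the $C^2$ test function $\Phi$ can be absorbed by the $\ep\,\dw$ penalty; this absorption is not automatic and is exactly where the quadratic estimate \eqref{uhlzqnesdf} is used. A second, smaller issue: the straight-line perturbations $X=X_0+\eta v(X_0)$ are not law-preserving (only to first order), so they do not obviously leave the $\ep\|d\mathcal L(X)/d\lambda\|_\infty$ penalty invariant, and the claimed $O(\eta^2)$ identity for $\hat U^\ep$ is unjustified. The paper avoids this by constructing, via Ambrosio's superposition principle applied to the continuity equation for a divergence-free $b$, a lift $\xi_{\bar X_\delta,t}(Z')$ whose law is exactly $m_0$ at every time, and then reads off the linear-in-$(t-t_0)$ term by disintegrating the superposition measure.
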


\begin{proof}
Fix $\ep>0$ and a  $C^2$-test function $\Phi: L^2\times L^2\to \R$ and assume that the map 
\begin{equation}\label{moizaedf}
\begin{split}
\ds & (X,Y) \to  \E\left[ U(X, \mathcal L(X))-U(Y, \mathcal L(X))\right]
-\Phi(X,Y)
+\ep \left\|\frac{d\mathcal L(X)}{d\lambda}\right\|_{\infty}\\[1.5mm]
& \text{achieves a strict minimum $I$ at $(\bar X, \bar Y)\in L^\infty_{ac}\times L^2$.}
\end{split}
\end{equation}

The first step consists in finding a perturbation ensuring that, at the minimum $(\bar X_\delta, \bar Y_\delta)$, we have in addition that $\bar Y_\delta \in L^\infty_{ac}$. 
\vs

Fix $\delta>0$. Stegall's lemma yields  $p_X,p_Y\in L^2$ with $\|p_X\|_{2}+\|p_Y\|_{2}\leq \delta$ and $(\bar X_\delta, \bar Y_\delta)\in L^\infty_{ac}\times L^\infty_{ac}$ such that  
\begin{equation*}
\begin{split}
\ds  (X,Y) \to & \E\left[ U(X, \mathcal L(X))-U(Y, \mathcal L(X))-p_X \cdot X-p_Y  \cdot Y+|X-\bar X|^2+|Y-\bar Y|^2\right]
 \\
&-\Phi(X,Y)+\ep \left\|\frac{d\mathcal L(X)}{d\lambda}\right\|_{\infty}
+\delta \left\|\frac{d\mathcal L(Y)}{d\lambda}\right\|_{\infty}\\[1.5mm]
& \text{achieves a minimum $I_\delta$ at $(\bar X_\delta, \bar Y_\delta)\in L^\infty_{ac}\times L^\infty_{ac}$.}
\end{split}
\end{equation*}
Note that, as $\delta \to 0,$  $I_\delta\to I$ and, hence,   $(\bar X_\delta,\bar Y_\delta) \to (\bar X,\bar Y)$  in $L^2$ and   
\be\label{ulaenjsdfmk}
\lim_{\delta \to 0}  \left\|\frac{d\mathcal L(\bar X_\delta)}{d\lambda}\right\|_{\infty}=
 \left\|\frac{d\mathcal L(\bar X)}{d\lambda}\right\|_{\infty} \ \text{and} \  \lim_{\delta\to 0} \delta \left\|\frac{d\mathcal L(\bar Y_\delta)}{d\lambda}\right\|_{\infty}=0 .
 \ee
 \vs
We also note  that the $D_xU(\bar Y_\delta,\mathcal L(\bar X_\delta))$'s are  bounded in $L^\infty$ and, therefore, converge (up to a sequence that we denote in the same way) in $L^\infty-$weak $\ast$ to a random variable $Z \in \sigma(\bar Y)$. The measurability of $Z$ is a consequence of the facts that $D_xU(\bar Y_\delta,\mathcal L(\bar X_\delta)) \in\sigma(\bar Y_\delta)$ and $\bar Y_\delta \to \bar Y$.
\vs
%
% It follows that  $Z \in \sigma(\bar Y)$ since $D_xU(\bar Y_\delta,\mathcal L(\bar X_\delta)) \in\sigma(\bar Y_\delta)$ and $\bar Y_\delta \to \bar Y$. 
% \vs
 We claim that 
 \be\label{takis421}Z= -D_Y\Phi(\bar X, \bar Y).\ee
 Indeed,  fix $\phi\in C^1_c(\R^d;\R^d)$. In view of the  optimality of $\bar Y_\delta$, for $h>0$ we have 
\begin{align}\label{mezjsdc}
 &\E\left[-U(\bar Y_\delta+h\phi(\bar Y_\delta), \mathcal L(\bar X_\delta))-p_Y (\bar Y_\delta+h\phi(\bar Y_\delta))+|(\bar Y_\delta+h\phi(\bar Y_\delta))-\bar Y|^2\right]\notag\\
 & \qquad -\Phi(\bar X_\delta,(\bar Y_\delta+h\phi(\bar Y_\delta))) +\delta \left\|\frac{d\mathcal L((\bar Y_\delta+h\phi(\bar Y_\delta)))}{d\lambda}\right\|_{\infty}\\
 &\geq \E\left[-U(\bar Y_\delta, \mathcal L(\bar X_\delta))-p_Y \bar Y_\delta+|\bar Y_\delta-\bar Y|^2\right]-\Phi(\bar X_\delta,\bar Y_\delta) +\delta \left\|\frac{d\mathcal L(\bar Y_\delta)}{d\lambda}\right\|_{\infty}. \notag
\end{align}
Recalling that the density of the law of $(\bar Y_\delta+h\phi(\bar Y_\delta))$ is given by $|\text{det}(J(Id+h\phi)^{-1})|m\circ (Id+h\phi)^{-1}$, $m$ being  the law of $\bar Y_\delta$ and $J$ the Jacobian matrix, we have that 
$$
\left\|\frac{d\mathcal L((\bar Y_\delta+h\phi(\bar Y_\delta)))}{d\lambda}\right\|_{\infty} \leq (1+Ch\|D\phi\|_\infty)\|\left\|\frac{d\mathcal L(\bar Y_\delta)}{d\lambda}\right\|_{\infty}.
$$
Hence, dividing \eqref{mezjsdc} by $h$ and letting $h\to0$ we get 
\begin{align*}
&\E\left[(-D_xU(\bar Y_\delta, \mathcal L(\bar X_\delta)) -p_Y\cdot +2(\bar Y_\delta-\bar Y)-D_Y\Phi(\bar X_\delta,\bar Y_\delta))\cdot \phi(\bar Y_\delta)\right]\\
&\qquad \geq -C\delta \|\phi\|_{C^1} \left\|\frac{d\mathcal L(\bar Y_\delta)}{d\lambda}\right\|_{\infty}. 
\end{align*}
\vs

Next,  we let $\delta \to 0$. Since $D_xU(\bar Y_\delta,\mathcal L(\bar X_\delta)) \to Z$  in  $L^\infty-$weak $\ast$, $(\bar X_\delta,\bar Y_\delta)\to (\bar X,\bar Y)$  in $L^2$, 
%it follows that $D_xU(\bar Y_\delta,\mathcal L(\bar X_\delta)) \to Z$  in  $L^\infty-$weak $\ast$ 
and \eqref{ulaenjsdfmk} holds, we find 
\begin{align*}
\E\left[-Z\cdot \phi(\bar Y)-D_Y\Phi(\bar X,\bar Y)\cdot \phi(\bar Y)\right] \geq 0. 
\end{align*}

In Lemma~\ref{lem.barYmeas} below we prove that $D_Y\Phi(\bar X, \bar Y)$ is $\sigma(\bar Y)$ measurable. Hence, \eqref{takis421} holds.
\vs
%This shows that $Z= -D_Y\Phi(\bar X, \bar Y)$. 

We note for later use that, in view of the  convexity of $H$ with respect to the first variable,  we find  that

\be\label{maejznrsdfgc}
\E\left[ H(-D_Y\Phi(\bar X, \bar Y), \bar Y)\right] \leq \liminf_{\delta\to 0} \E\left[ H(D_xU(\bar Y_\delta, \mathcal L(\bar X)), \bar Y_\delta)\right].
\ee
\vs

We now start the second part of the proof, in which $\bar Y_\delta$ is fixed and where we use the fact that $U$ is a solution of \eqref{ME1}. We set $m_0= \mathcal L(\bar X_\delta)$ and $\tilde m= \mathcal L(\bar Y_\delta)$. Then $\bar X_\delta$ is a minimum point of 
$$
 X \to  \E\left[ U(X, \mathcal L(X))-U(\bar Y_\delta, \mathcal L(X))\right]
-\Phi(X,\bar Y_\delta)
+\ep  \left\|\frac{d\mathcal L(X)}{d\lambda}\right\|_{\infty}. 
$$
For $\mu\in \Pw$ and  $X\in L^2$ let 
$$
W(\mu)=  \int_{\R^d} U(x, \mu)(\mu-\tilde m)+\ep\left\|\frac{d\mu}{d\lambda}\right\|_{\infty} \ \text{and} \ \tilde W(X)=W(\mathcal L(X)).
$$
%and $\tilde W(X)=W(\mathcal L(X))$ for $X\in L^2$. 
%\vs

Since  $\tilde W-\Phi(\cdot, \bar Y_\delta)$ has a minimum at $\bar X_\delta$ and $\Phi\in C^2$  it follows that, for some constant $C>0$ and all $X\in L^2$,  
\be\label{uhlzqnesdf}
\tilde W(X) \geq \tilde W(\bar X_\delta)+\E\left[ D_X\Phi(\bar X_\delta,\bar Y_\delta)\cdot (X-\bar X_\delta)\right] -C\|X-\bar X_\delta\|_2^2. %\qquad \forall X\in L^2.
\ee
The main difficulty is to replace $D_X\Phi(\bar X_\delta,\bar Y_\delta)$ by a map of the form $D\phi(\bar X_\delta)$ for some $\phi \in C^1(\R^d)$. 
\vs

It turns out that, although we are not able to find such $\phi$,  we have the following result, which is largely borrowed from ideas of \cite{BaTu19} and \cite{AlJo20} (recall that $m_0=\mathcal L(\bar X_\delta)$). Its proof is presented after the end of the ongoing one. 

\begin{lem}\label{takis513} There exists a map $h\in L^2_{m_0}(\R^d)$ and a sequence $\phi_n\in C^\infty_c(\R^d)$ such that, as $n\to \infty$,  $D\phi_n \to h$ in $L^2_{m_0}(\R^d, \R^d)$ and, for all $v\in L^2_{m_0}(\R^d, \R^d)$, 
\be\label{qsndlrthmf}
\E\left[ v(\bar X_\delta)\cdot D_X\Phi(\bar X_\delta,\bar Y_\delta)\right] = \E\left[ v(\bar X_\delta)\cdot h(\bar X_\delta)\right].%\qquad \forall v\in L^2_{m_0}(\R^d, \R^d).
\ee
\end{lem}

We also need the following fact. Its proof is given later in this section.

\begin{lem}\label{takis514} If $n$ is sufficiently large, the map
$$
\Pw \ni m\to \int_{\R^d} (U(x,m)-\phi_n(x))(m(dx)-\tilde m(dx)) +\ep  \Big({\bf d}_2(m,m_0)+  \left\|m\right\|_{\infty}\Big)
$$
has a local minimum at $m_0 \in \Pw\cap L^\infty(\R^d)$.
\end{lem}

Since  $U$ is a solution of  \eqref{ME1} in the sense of Definition \ref{d1}, in view of the previous lemmata, we find  \begin{align*}
& \ds \int_{\R^d} U(x, m) (m_0(x)-\tilde m(x))dx + \int_{\R^d} H(D_xU(x, m_0),x)(m_0(x)-\tilde m(x))dx \\
& \qquad \ds - \int_{\R^d} (D_xU(y, m_0)-D\phi_n(y))\cdot D_pH(D_xU(y,m),y )m_0(dy)\\
& \qquad\qquad \ds \geq 
\int_{\R^d} F(x,m_0)(m_0(x)-\tilde m(x))dx- C\ep(1+\|m_0\|_{L^\infty(\R^d)}^2). 
\end{align*}
Moreover, since  $D\phi_n \to h$ in $L^2_{m_0}(\R^d)$, letting  $n\to \infty$ yields 
\begin{align*}
& \ds \int_{\R^d} U(x, m) (m_0(x)-\tilde m(x))dx + \int_{\R^d} H(D_xU(x, m_0),x)(m_0(x)-\tilde m(x))dx \\
& \qquad \ds - \int_{\R^d} (D_xU(y, m_0)-h(y))\cdot D_pH(D_xU(y,m),y )m_0(dy)\\
& \qquad\qquad \ds \geq 
\int_{\R^d} F(x,m_0)(m_0(x)-\tilde m(x))dx- C\ep(1+\|m_0\|_{\infty}^2).
\end{align*}
Note that in view of  \eqref{qsndlrthmf} and of the definition of $m_0$ and $\tilde m$ the above can be rewritten as 
\begin{align*}
& \ds \E\left[ U(\bar X_\delta, \mathcal L(\bar X_\delta)) - U(\bar Y_\delta,  \mathcal L(\bar X_\delta))  + H(D_xU(\bar X_\delta, \mathcal L(\bar X_\delta)), \bar X_\delta)- H(D_xU(\bar Y_\delta, \mathcal L(\bar X_\delta)), \bar Y_\delta) \right] \\
& \qquad \ds - \E\left[ (D_xU(\bar X_\delta, \mathcal L(\bar X_\delta))-D_X\Phi(\bar X_\delta, \bar Y_\delta))\cdot D_pH(D_xU(\bar X_\delta,\mathcal L(\bar X_\delta)),\bar X_\delta )\right]\\
& \qquad\qquad \ds \geq 
\left[ F(\bar X_\delta,\mathcal L(\bar X_\delta))- F(\bar Y_\delta,\mathcal L(\bar X_\delta))\right]- C\ep\Big(1+ \left\|\frac{d\mathcal L(\bar X_\delta)}{d\lambda}\right\|_{\infty}\Big).
\end{align*}
Recalling that $(\bar X_\delta,\bar Y_\delta) \to (\bar X,\bar Y)$  in $L^2$ and  that \eqref{ulaenjsdfmk} and \eqref{maejznrsdfgc} hold, we  obtain easily \eqref{cond2BIS} by letting $\delta\to 0$. 

\end{proof}

In the proof above we used the following fact. 

\begin{lem}\label{lem.barYmeas} Let $\bar Y$ be defined as in \eqref{moizaedf}. Then $D_Y\Phi(\bar X, \bar Y)$ is $\sigma(\bar Y)-$measurable. 
\end{lem}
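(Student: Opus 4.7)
The plan is to exploit that, once $X$ is frozen at $\bar X$, the first part of the functional in \eqref{moizaedf} depends on $Y$ only through its law: the $U$-term $\E[U(Y, \mathcal L(\bar X))]$ equals $\int U(y, \mathcal L(\bar X))\,d\mathcal L(Y)(y)$, and the remaining pieces of the functional do not involve $Y$ at all. Hence the strict minimality at $(\bar X, \bar Y)$ forces $\bar Y$ to minimize the map $Y \mapsto -\E[U(Y, \mathcal L(\bar X))] - \Phi(\bar X, Y)$ over the whole of $L^2$. Writing $D := D_Y\Phi(\bar X, \bar Y)$, to conclude $\sigma(\bar Y)$-measurability it is enough to show that $\E[D \cdot \eta] = 0$ for every $\eta \in L^2$ with $\E[\eta\mid \bar Y] = 0$, since this is precisely the orthogonality characterization of $\sigma(\bar Y)$-measurable $L^2$ vectors.

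Given such an $\eta$, I would plug $Y = \bar Y + t\eta$ with $t>0$ small into the minimality inequality and $C^2$-Taylor-expand $\Phi(\bar X, \cdot)$ to obtain
\[
0 \leq -\big(\E[U(\bar Y + t\eta, \mathcal L(\bar X))] - \E[U(\bar Y,\mathcal L(\bar X))]\big) - t\,\E[D \cdot \eta] + O(t^2).
\]
The crucial step is to identify the directional derivative of the $U$-term at $\bar Y$ in the direction $\eta$. Relying on the Lipschitz continuity and semiconcavity of $U(\cdot, \mathcal L(\bar X))$ together with the fact that the law of $\bar Y$ puts full mass on the set where this Lipschitz map is classically differentiable, dominated convergence yields
\[
\lim_{t \to 0^+} \frac{\E[U(\bar Y + t\eta, \mathcal L(\bar X))] - \E[U(\bar Y,\mathcal L(\bar X))]}{t} = \E[D_x U(\bar Y, \mathcal L(\bar X))\cdot \eta] = \E[D_xU(\bar Y, \mathcal L(\bar X)) \cdot \E[\eta \mid \bar Y]] = 0,
\]
where the middle equality uses that $D_xU(\bar Y, \mathcal L(\bar X))$ is $\sigma(\bar Y)$-measurable, together with the tower property. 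Dividing the minimality inequality by $t>0$ and letting $t \to 0^+$ therefore produces $\E[D \cdot \eta] \leq 0$; applying the same argument to $-\eta$ (which also satisfies $\E[-\eta\mid\bar Y]=0$) supplies the reverse inequality, and hence $\E[D \cdot \eta] = 0$.

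The delicate point that I expect to be the main obstacle is the almost-sure differentiability of $U(\cdot, \mathcal L(\bar X))$ at $\bar Y$, which forces the law of $\bar Y$ to be absolutely continuous so that Rademacher applies and the Lipschitz map $U(\cdot, \mathcal L(\bar X))$ is classically differentiable at $\bar Y$ with probability one. This absolute continuity is precisely what is arranged at the earlier Stegall-perturbation stage of the proof of the proposition, through the penalization of the form $\delta \|d\mathcal L(Y)/d\lambda\|_\infty$ that forces the perturbed minimizer into $L^\infty_{ac}$; once the lemma is applied at this level of regularity, the directional-derivative computation is fully rigorous and yields in fact the pointwise identification $D_Y\Phi(\bar X, \bar Y) = -D_xU(\bar Y, \mathcal L(\bar X))$ almost surely. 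Absent this regularity, only the semiconcavity-based upper bound $U(\bar Y + t\eta, \mathcal L(\bar X)) - U(\bar Y, \mathcal L(\bar X)) \leq t\, p(\bar Y)\cdot \eta + Ct^2|\eta|^2$ (for a Borel selection $p(y)\in D_x^+U(y, \mathcal L(\bar X))$) would be available, which is one-sided and therefore insufficient by itself to close the argument.
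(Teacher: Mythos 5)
You have correctly identified the central obstacle, but your proposed resolution does not close the gap. The lemma is stated, and invoked in the proof of the proposition, for the $\bar Y$ of \eqref{moizaedf}, which is merely an element of $L^2$: its law has no density, so Rademacher gives nothing along $\bar Y$, and the directional-derivative identity $\lim_{t\to 0^+} t^{-1}\big(\E[U(\bar Y + t\eta, \mathcal L(\bar X))] - \E[U(\bar Y, \mathcal L(\bar X))]\big) = \E[D_xU(\bar Y, \mathcal L(\bar X)) \cdot \eta]$ at the heart of your orthogonality test is unavailable. Substituting $\bar Y_\delta$ for $\bar Y$ does not rescue the argument: $\bar Y_\delta$ minimizes a different functional carrying the penalization $\delta\|d\mathcal L(Y)/d\lambda\|_\infty$, whose behavior under a perturbation $Y = \bar Y_\delta + t\eta$ is controlled only for $\eta$ of the deterministic-pushforward form $\phi(\bar Y_\delta)$ (this is precisely the estimate on $\|d\mathcal L(\bar Y_\delta + h\phi(\bar Y_\delta))/d\lambda\|_\infty$ used in the proof of the proposition), not for the general $\eta$ with $\E[\eta \mid \bar Y_\delta]=0$ that you need; and at any rate the proposition requires the measurability of $D_Y\Phi(\bar X,\bar Y)$ at $\bar Y$ itself, not at $\bar Y_\delta$.

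The paper removes the obstruction by mollifying $U$ rather than the measure. Replacing $U(\cdot, \mathcal L(\bar X))$ by $\rho_\delta \ast U(\cdot, \mathcal L(\bar X))$, which is $C^1$ in the space variable regardless of the law of $Y$, and adding small terms $-p_Y\cdot Y$ and $|Y - \bar Y|^2$, one obtains via Ekeland--Lebourg/Stegall a genuine minimum $Y_\delta$ of a Fr\'echet-differentiable map; the exact first-order condition then writes $D_Y\Phi(\bar X, Y_\delta) + p_Y - 2(Y_\delta - \bar Y)$ as the composite $-\rho_\delta \ast D_xU(\cdot, \mathcal L(\bar X))(Y_\delta)$, a deterministic Borel function of $Y_\delta$. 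Sending $\delta \to 0$, with $Y_\delta \to \bar Y$, $p_Y \to 0$, and $D_Y\Phi(\bar X, Y_\delta)\to D_Y\Phi(\bar X, \bar Y)$ in $L^2$, yields the claim. This mollification-in-space step is the missing ingredient in your proposal: it replaces the $\mathcal L(\bar Y)$-a.e.\ classical gradient you tried to invoke (available only when $\mathcal L(\bar Y)$ has a density) by an everywhere-defined gradient, so the optimality condition can be written exactly without any assumption on the law of $\bar Y$.
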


\begin{proof} Fix a standard mollifier  $\rho$ and, for $\delta >0$, set  $\rho_\delta(x)=\delta^{-d}\rho(x/\delta)$. It follows that there exists $p_Y$ with $\|p_Y\|_2<\delta$ such that 
$$
Y\to   \E\left[-\rho_\delta \ast U(\cdot, \mathcal L(\bar X))(Y)-p_Y\cdot Y+|Y-\bar Y|^2\right] -\Phi(\bar X,Y)
$$
has a minimum at some $Y_\delta$. 
\vs
Since $\bar Y$ is a minimum of 
$$
Y\to   \E\left[-U(Y, \mathcal L(\bar X))\right] -\Phi(\bar X,Y)
$$
and  $U$ is uniformly Lipschitz continuous in the first variable, it follows that, as $\delta \to 0$,  $Y_\delta \to  \bar Y$ in $L^2$. 
\vs 
The optimality condition for $Y_\delta$ reads 
$$
-\rho_\delta \ast D_xU(\cdot, \mathcal L(\bar X))(Y_\delta)-p_Y+2(Y_\delta-\bar Y) -D_Y\Phi(\bar X,Y_\delta)=0, 
$$
so that $D_Y\Phi(\bar X,Y_\delta)+p_Y$ is measurable with respect to $\sigma(Y_\delta, \bar Y)$. 
\vs

Letting  $\delta\to 0$ yields the claim.

\end{proof}

We conclude with the proofs of the two lemmata used above.

\begin{proof}[The proof of Lemma~\ref{takis513}] Let  $p_X= D_X\Phi(\bar X_\delta,\bar Y_\delta)$ and $\mu=\mathcal L((\bar X_\delta,p_X))$, and denote by 
$\nu_x$ the conditional law of $p_X$ given $\bar X_\delta=x$. Then $$\mu(dx,dy)= m_0(dx) \nu_x(dy).$$ 
  
 Let $Q_1=[0,1]^d$ and $\lambda$ be the Lebesgue measure on $Q_1$. For $m_0-$a.e. $x\in \R^d$, there exists a unique gradient $\psi_x:\R^d\to \R^d$ of a convex function such that $\nu_x=\psi_x\sharp \lambda$. It then follows from the  continuity of the optimal transport map with respect to the target measure that the map $(x,y)\to \psi_x(y)$ is measurable. 
 \vs
 
Consider a random variable $Z$ with uniform law on $Q_1$ which is independent of $\bar X_\delta$. It follows that 
 the law of $(\bar X_\delta, \psi_{\bar X_\delta}(Z))$ is equal to $\mu$. 
 \vs
 Indeed, for any $f\in C^0_b(\R^d\times \R^d)$, we have 
\begin{align*}
\E\left[ f(\bar X_\delta, \psi_{\bar X_\delta}(Z))\right]&  = \int_{\R^{d}\times Q_1} f(x,\psi_x(z)) m_0(dx)dz= \int_{\R^d\times \R^d} f(x,y) \psi_x\sharp \lambda (dy) m_0(dx)\\
&= 
\int_{\R^d\times \R^d} f(x,y)\mu(dx,dy).
\end{align*}
In particular, since $(\bar X_\delta, \psi_{\bar X_\delta}(Z))$  and $(\bar X_\delta,p_X)$ have the same law, for any measurable and bounded vector field $v:\R^d\to \R^d$, we have 
%(since $(\bar X_\delta, \psi_{\bar X_\delta}(Z))$  and $(\bar X_\delta,p_X)$ have the same law):
$$
\E\left[ v(\bar X_\delta) \cdot p_X\right] = \E\left[ v(\bar X_\delta)\cdot \psi_{\bar X_\delta}(Z)\right] = \E\left[ v(\bar X_\delta)\cdot  h(\bar X_\delta)\right], 
$$
where $h(\bar X_\delta)$ is the conditional expectation of $\psi_{\bar X_\delta}(Z)$ given $\bar X_\delta$, which, in view of the independence of $\bar X_\delta$ and $Z$, is equal to
$$
h(\bar X_\delta)= \E\left[ \psi_x(Z)\right]_{x=\bar X_\delta}= \int_{Q_1} \psi_{\bar X_\delta}(z)dz.
$$
%since $\bar X_\delta$ and $Z$ are independent. 

The aim  is to prove that the measurable map $h:\R^d\to \R^d$ actually belongs to $\mathcal{T}^2_{m_0}\Pw$, which is the closure in $L^2_{m_0}(\R^d,\R^d)$ of the set $\{ D\phi,\; \phi\in C^\infty_c(\R^d)\}$. 
\vs

For this we recall that the orthogonal complement of $\mathcal{T}^2_{m_0}\Pw$  in $L^2_{m_0}(\R^d,\R^d)$ is the set of vector fields $b\in  L^2_{m_0}(\R^d,\R^d)$ such that ${\rm div}(bm_0)=0$ in the sense of distributions. 
\vs

Fix $b$ as above. We claim that  $$\int_{\R^d} h(x)\cdot b(x) m_0(dx)=0.$$ 

Indeed, let  $T>0$ and note that $m_0$ is a constant-in-time solution of the continuity equation 
$$
\partial_t m +{\rm div}(mb)=0\; {\rm on}\; \R^d\times (0,T], \qquad m(0)=m_0.
$$
It follows from the classical Ambrosio's superposition principle, that there exists a Borel probability measure $\eta$ on $\Gamma=C^0([0,T], \R^d)$ such that $m_0=e_t\sharp \eta$ for any $t\in [0,T]$, $e_t$ being the evaluation map at time $t$, and,  $\eta-$a.e. $\gamma \in \Gamma$ is an absolutely continuous  solution of $\dot\gamma(t)= b(\gamma(t))$.  
\vs

Choose $t_0\in [0,T)$ such that, for $\eta-$a.e. $\gamma$, $\dot \gamma(t_0)$ exists and equals $b(\gamma(t_0))$ and 
disintegrate $\eta$ with respect to $m_0$ so that $\eta(d\gamma)= \int_{\R^d}\eta_x(d\gamma)m_0(dx)$, where, for $m_0-$a.e. $x\in \R^d$ and $\eta_x-$a.e. $\gamma\in \Gamma$, $\gamma(t_0)= x$. Note $m_{x}(t)$ the probability measure $m_{x}(t)= e_t\sharp \eta_x$. 
\vs

Fix $t\in (t_0,T]$. Arguing as above, we can find $\xi_{x,t}:Q_1\to \R^d$, which is the  gradient of a convex function, such that, for $m_0-$a.e. $x\in \R^d$, $\xi_{x,t}\sharp \lambda= m_{x}(t)$. In addition, the map $(x,t,z)\to \xi_{x,t}(z)$ is Borel measurable. 
\vs
Let $Z'$ be a random variable with uniform law on $Q_1$  such that $\bar X_\delta$, $Z$ and $Z'$ are independent, and 
apply \eqref{uhlzqnesdf} with $X= \xi_{\bar X_\delta, t}(Z')$ to get,  in view of the fact that $\mathcal L(X)=m_0$,
% has for law $m_0$, 
\begin{align*}
0 \geq & \E\left[ p_X \cdot (X-\bar X_\delta)\right] -C\|X-\bar X_\delta\|_2^2 = \E\left[ \psi_{\bar X_\delta}(Z)  \cdot (\xi_{\bar X_\delta, t}(Z')-\bar X_\delta)\right] -C\|\xi_{\bar X_\delta, t}(Z')-\bar X_\delta\|_2^2 \\
& = \E\left[ h(\bar X_\delta)  \cdot (\xi_{\bar X_\delta, t}(Z')-\bar X_\delta)\right] -C(t-t_0)^2.
\end{align*}
Note that 
\begin{align*}
& \E\left[ h(\bar X_\delta)  \cdot (\xi_{\bar X_\delta, t}(Z')-\bar X_\delta)\right]  = \int_{\R^d\times Q_1} h(x) \cdot (\xi_{x, t}(z)-x) m_0(x)dxdz \\
& \qquad =  \int_{\R^d\times \R^d} h(x) \cdot (y-x) m_0(x) \xi_{x, t}\sharp \lambda (dy) dx  = \int_{\Gamma} h(\gamma(t_0))\cdot(\gamma(t)-\gamma(0)) \eta(d\gamma)\\
 &\qquad   = (t-t_0) \int_{\R^d} h(x)\cdot b(x) m_0(x)dx +o(t-t_0). 
 \end{align*}
 Inserting the last  equality in the previous inequality we find that, for any $b\in (\mathcal{T}^2_{m_0}\Pw)^\perp$,   $$ \int_{\R^d} h(x)\cdot b(x) m_0(x)dx= 0.$$  
 
It follows that $h\in \mathcal{T}^2_{m_0}\Pw$, and  this implies that the existence of a sequence of maps $\phi_n\in C^\infty_c(\R^d)$ such that $D\phi_n \to h$ in $L^2_{m_0}(\R^d)$. 
 
\end{proof}

\begin{proof}[The proof of Lemma~\ref{takis514}] Fix $m\in \Pw$. Since  $m_0$ is absolutely continuous with respect to the Lebesgue measure, there exists a unique $\xi$, which is the gradient of a convex function, such that $\xi\sharp m_0= m$, and, in view of \eqref{qsndlrthmf}, \eqref{uhlzqnesdf} can be written, for $X= \xi(\bar X_\delta)$,  as
$$
W(m) \geq W(m_0)+\int_{\R^d} h(x) \cdot (\xi(x)-x)m_0(dx)  -C{\bf d}_2^2(m_0,m).
$$
Replacing $h$ by $D\phi_n$ we get 
$$
W(m) \geq W(m_0)+\int_{\R^d} D\phi_n(x) \cdot (\xi(x)-x)m_0(dx)  -\|D\phi_n-h\|_{L^2_{m_0}}{\bf d}_2(m_0,m) - C{\bf d}_2^2(m_0,m).
$$
Note that 

%\begin{align*}
\[
\begin{split}
& \left| \int_{\R^d} \phi_n(x) (m-m_0)(dx) - \int_{\R^d} D\phi_n(x) \cdot (\xi(x)-x)m_0(dx)\right|\\
& \qquad= \left| \int_0^1 \int_{\R^d} (D\phi_n((1-t)\xi(x)+tx)-D\phi_n(x))\cdot  (\xi(x)-x)m_0(dx)\right| \leq \|D^2\phi_n\|_\infty{\bf d}^2_2(m,m_0). 
%\end{align*}
\end{split}
\]
Hence, there exist $\delta_n\to 0$, such that,  for any $m\in \Pw$,  
$$
W(m) \geq W(m_0)+\int_{\R^d} \phi_n(x) (m-m_0)(dx)  -\delta_n{\bf d}_2(m_0,m) - (\delta_n^{-1}+C){\bf d}_2^2(m_0,m).
$$

Choosing $r_n= \delta_n(\delta_n^{-1}+C)^{-1}$ yields that $m_0$ is a minimum in $B_{r_n}(m_0)$ of the map 
$$
\Pw\ni m\to W(m) -\int_{\R^d} \phi_n(x) (m-m_0)(dx) +2\delta_n {\bf d}_2(m_0,m).
$$
The definition of $W$ yields that,  if $n$ so large that $2\delta_n\leq \ep$, $m_0$ is a minimum in $B_{r_n}(m_0)$ of the map 
$$
\Pw\ni m\to \int_{\R^d} (U(x,m)-\phi_n(x))(m(dx)-\tilde m(dx)) +\ep  \Big({\bf d}_2(m,m_0)+ \left\|m\right\|_{\infty}\Big).
$$
\end{proof}

\bibliographystyle{siam}

%
%\bigskip
%
%%\author[P. Cardaliaguet]{Pierre Cardaliaguet}
%\noindent ($^{1}$) Ceremade (UMR CNRS 7534) \\ Universit\'{e} Paris-Dauphine PSL\\ Place du Mar\'{e}chal De Lattre De Tassigny \\ 75775 Paris CEDEX 16, France\\
%email: cardaliaguet@ceremade.dauphine.fr
%\\ \\
%%\author[P.~E. Souganidis]{Panagiotis E. Souganidis}
%\noindent ($^{2}$) Department of Mathematics, 
%The University of Chicago, \\
%5734 S. University Ave., 
%Chicago, IL 60637, USA  \\ 
%email: souganidis@math.uchicago.edu
%
%
\end{document}